\pgfplotsset{compat=1.13}
\theoremstyle{plain}
\newtheorem{theorem}{Theorem}[section]
\newtheorem{lemma}[theorem]{Lemma}
\newtheorem{proposition}[theorem]{Proposition}
\theoremstyle{definition}
\newtheorem{definition}[theorem]{Definition}
\newtheorem{problem}[theorem]{Problem}
\theoremstyle{remark}
\newtheorem{remark}[theorem]{Remark}
\numberwithin{figure}{section}
\numberwithin{equation}{section}
\DeclareMathOperator{\ad}{ad}
\DeclareMathOperator{\Real}{Re}
\DeclareMathOperator{\Res}{Res}
\title[defocussing NLS]{Partial mass dynamics of the defocussing nonlinear Schr\"odinger equation}
\author{Jiaqi Liu}
\author{Xixi Xu}
\address[Liu]{School of mathematics, University of Chinese Academy of Sciences. No.19 Yuquan Road, Beijing China }
\email{jqliu@ucas.ac.cn}
\address[Xu]{School of mathematics, University of Chinese Academy of Sciences. No.19 Yuquan Road, Beijing China }
\email{18428374320@163.com}
\date{\today}
\begin{document}
	
	\maketitle
	\begin{abstract}
		We study the long time dynamics of the defocussing NLS equation. Compared with previous literature, we revisit the direct and inverse scattering map to obtain asymptotics in some weighted energy space that requires less  restrictive decay and  regularity assumptions. The main result is derived from an application of uniform resolvent bound and an approximation argument in the spirit of  \textit{Riemann-Lebesgue} lemma. As a consequence, our result depicts the long time dynamics of the zeros of the solution to the defocussing NLS equation.
	\end{abstract}
	
	\tableofcontents
	
	%
	%
	
	\newcommand{\eps}{\varepsilon}
	\newcommand{\lam}{\zeta}
	
	\newcommand{\bfN}{\mathbf{N}}
	\newcommand{\calbR}{\mathcal{ \breve{R}}}
	\newcommand{\rhobar}{\overline{\rho}}
	\newcommand{\zetabar}{\overline{\zeta}}
	
	\newcommand{\rarr}{\rightarrow}
	\newcommand{\darr}{\downarrow}
	
	\newcommand{\dee}{\partial}
	\newcommand{\dbar}{\overline{\partial}}
	
	\newcommand{\dint}{\displaystyle{\int}}
	
	\newcommand{\dotarg}{\, \cdot \, }

	%
	%
	
	\newcommand{\RHP}{\mathrm{LC}}			
	\newcommand{\PC}{\mathrm{PC}}
	\newcommand{\w}{w^{(2)}}
	%
	%
	
	\newcommand{\zbar}{\overline{z}}

	\newcommand{\bbC}{\mathbb{C}}
	\newcommand{\bbR}{\mathbb{R}}

	\newcommand{\calB}{\mathcal{B}}
	\newcommand{\calC}{\mathcal{C}}
	\newcommand{\calR}{\mathcal{R}}
	\newcommand{\calS}{\mathcal{S}}
	\newcommand{\calZ}{\mathcal{Z}}
	\newcommand{\tgamma}{\tilde{\gamma}}

	\newcommand{\ba}{\breve{a}}
	\newcommand{\bb}{\breve{b}}
	\newcommand{\bchi}{\breve{\chi}}
	
	\newcommand{\balpha}{\breve{\alpha}}
	\newcommand{\brho}{\breve{\rho}}

	\newcommand{\tPhi}{{\widetilde{\Phi}}}

	\newcommand{\tp}{\text{p}}
	\newcommand{\tq}{\text{q}}
	\newcommand{\tr}{\text{r}}
	
	\newcommand{\bfe}{\mathbf{e}}
	\newcommand{\bfn}{\mathbf{n}}
	
	\newcommand{\tA}{\tilde{A}}
	\newcommand{\tB}{\tilde{B}}
	\newcommand{\tomega}{\tilde{\omega}}
	\newcommand{\tc}{\tilde{c}}

	\newcommand{\mhat}{\hat{m}}
	
	\newcommand{\bphi}{\breve{\Phi}}
	\newcommand{\bN}{\breve{N}}
	\newcommand{\bV}{\breve{V}}
	\newcommand{\bR}{\breve{R}}
	\newcommand{\bdelta}{\breve{\delta}}
	\newcommand{\bzeta}{\breve{\zeta}}
	\newcommand{\bbeta}{\breve{\beta}}
	\newcommand{\bm}{\breve{m}}
	\newcommand{\br}{\breve{r}}
	\newcommand{\bnu}{\breve{\nu}}
	\newcommand{\bbfN}{\breve{\mathbf{N}}}
	\newcommand{\rbar}{\overline{r}}
	
	\newcommand{\One}{\mathbf{1}}
	\newcommand{\tabincell}[2]{\begin{tabular}{@{}#1@{}}#2\end{tabular}}%
	\renewcommand{\Re}{\operatorname{Re}}
	\renewcommand{\Im}{\operatorname{Im}}
	\newcommand{\diag}{\textrm{diag}}
	\newcommand{\off}{\textrm{off}}
	\newcommand{\T}{\textrm{T}}
	%
	%
	
	\newcommand{\bigO}[2][ ]
	{
		\mathcal{O}_{#1}
		\left(
		{#2}
		\right)
	}
	
	\newcommand{\littleO}[1]{{o}\left( {#1} \right)}
	
	\newcommand{\norm}[2]
	{
		\left\Vert		{#1}	\right\Vert_{#2}
	}
	
	%
	%
	\newcommand{\colvec}[2]
	{
		\left(
		\begin{array}{c}
			{#1}\\	
                {#2}	
		\end{array}
		\right)
	}
	\newcommand{\rowvec}[2]
	{
		\left(
		\begin{array}{cc}
			{#1}	&	{#2}	
		\end{array}
		\right)
	}
	
	\newcommand{\uppermat}[1]
	{
		\left(
		\begin{array}{cc}
			0		&	{#1}	\\
			0		&	0
		\end{array}
		\right)
	}
	
	\newcommand{\lowermat}[1]
	{
		\left(
		\begin{array}{cc}
			0		&	0	\\
			{#1}	&	0
		\end{array}
		\right)
	}
	
	\newcommand{\offdiagmat}[2]
	{
		\left(
		\begin{array}{cc}
			0		&	{#1}	\\
			{#2}	&	0
		\end{array}
		\right)
	}
	
	\newcommand{\diagmat}[2]
	{
		\left(
		\begin{array}{cc}
			{#1}	&	0	\\
			0		&	{#2}
		\end{array}
		\right)
	}
	
	\newcommand{\Offdiagmat}[2]
	{
		\left(
		\begin{array}{cc}
			0			&		{#1} 	\\
			\\
			{#2}		&		0
		\end{array}
		\right)
	}
	
	\newcommand{\twomat}[4]
	{
		\left(
		\begin{array}{cc}
			{#1}	&	{#2}	\\
			{#3}	&	{#4}
		\end{array}
		\right)
	}
	
	\newcommand{\unitupper}[1]
	{	
		\twomat{1}{#1}{0}{1}
	}
	
	\newcommand{\unitlower}[1]
	{
		\twomat{1}{0}{#1}{1}
	}
	
	\newcommand{\Twomat}[4]
	{
		\left(
		\begin{array}{cc}
			{#1}	&	{#2}	\\[10pt]
			{#3}	&	{#4}
		\end{array}
		\right)
	}
	
	%
	%
	%
	
	\newcommand{\JumpMatrixFactors}[6]
	{
		\begin{equation}
			\label{#2}
			{#1} =	\begin{cases}
				{#3} {#4}, 	&	\lambda \in (-\infty,\xi) \\
				\\
				{#5}{#6},	&	\lambda \in (\xi,\infty)
			\end{cases}
		\end{equation}
	}

	
	%
	%
	%
	
	\newcommand{\RMatrix}[9]
	{
		\begin{equation}
			\label{#1}
			\begin{aligned}
				\left. R_1 \right|_{(\xi,\infty)} 	&= {#2} &	\qquad\qquad		
				\left. R_1 \right|_{\Sigma_1}		&= {#3}
				\\[5pt]
				\left. R_3 \right|_{(-\infty,\xi)} 	&= {#4} 	&	
				\left. R_3 \right|_{\Sigma_2} 	&= {#5}
				\\[5pt]
				\left. R_4 \right|_{(-\infty,\xi)} 	&= {#6} &	
				\left. R_4 \right|_{\Sigma_3} 	&= {#7}
				\\[5pt]
				\left. R_6 \right|_{(\xi,\infty)}  	&= {#8} &	
				\left. R_6 \right|_{\Sigma_4} 	&= {#9}
			\end{aligned}
		\end{equation}
	}
	
	%
	%
	
	%
	%
	%
	%
	%
	%
	
	\newcommand{\SixMatrix}[6]
	{
		\begin{figure}
			\centering
			\caption{#1}
			\vskip 15pt
			\begin{tikzpicture}
				[scale=0.75]
				%
				%
				\draw[thick]	 (-4,0) -- (4,0);
				\draw[thick] 	(-4,4) -- (4,-4);
				\draw[thick] 	(-4,-4) -- (4,4);
				%
				%
				\draw	[fill]		(0,0)						circle[radius=0.075];
				\node[below] at (0,-0.1) 				{$z_0$};
				%
				%
				\node[above] at (3.5,2.5)				{$\Omega_3$};
				\node[below]  at (3.5,-2.5)			{$\Omega_7$};
				\node[above] at (0,3.25)				{$\Omega_1$};
				\node[below] at (0,-3.25)				{$\Omega_2$};
				\node[above] at (-3.5,2.5)			{$\Omega_8$};
				\node[below] at (-3.5,-2.5)			{$\Omega_4$};
				%
				%
				\node[above] at (0,1.25)				{$\twomat{1}{0}{0}{1}$};
				\node[below] at (0,-1.25)				{$\twomat{1}{0}{0}{1}$};
				%
				%
				\node[right] at (1.20,0.70)			{$#3$};
				\node[left]   at (-1.20,0.70)			{$#4$};
				\node[left]   at (-1.20,-0.70)			{$#5$};
				\node[right] at (1.20,-0.70)			{$#6$};
			\end{tikzpicture}
			\label{#2}
		\end{figure}
	}
	
	\newcommand{\sixmatrix}[6]
	{
		\begin{figure}
			\centering
			\caption{#1}
			\vskip 15pt
			\begin{tikzpicture}
				[scale=0.75]
				%
				%
				\draw[thick]	 (-4,0) -- (4,0);
				\draw[thick] 	(-4,4) -- (4,-4);
				\draw[thick] 	(-4,-4) -- (4,4);
				%
				%
				\draw	[fill]		(0,0)						circle[radius=0.075];
				\node[below] at (0,-0.1) 				{$-z_0$};
				%
				%
				\node[above] at (3.5,2.5)				{$\Omega_9$};
				\node[below]  at (3.5,-2.5)			{$\Omega_5$};
				\node[above] at (0,3.25)				{$\Omega_1$};
				\node[below] at (0,-3.25)				{$\Omega_2$};
				\node[above] at (-3.5,2.5)			{$\Omega_6$};
				\node[below] at (-3.5,-2.5)			{$\Omega_{10}$};
				%
				%
				\node[above] at (0,1.25)				{$\twomat{1}{0}{0}{1}$};
				\node[below] at (0,-1.25)				{$\twomat{1}{0}{0}{1}$};
				%
				%
				\node[right] at (1.20,0.70)			{$#3$};
				\node[left]   at (-1.20,0.70)			{$#4$};
				\node[left]   at (-1.20,-0.70)			{$#5$};
				\node[right] at (1.20,-0.70)			{$#6$};
			\end{tikzpicture}
			\label{#2}
		\end{figure}
	}
	
	\newcommand{\JumpMatrixRightCut}[6]
	{
		\begin{figure}
			\centering
			\caption{#1}
			\vskip 15pt
			\begin{tikzpicture}[scale=0.85]
				%
				%
				\draw [fill] (4,4) circle [radius=0.075];						
				\node at (4.0,3.65) {$\xi$};										
				%
				%
				\draw 	[->, thick]  	(4,4) -- (5,5) ;								
				\draw		[thick] 		(5,5) -- (6,6) ;
				\draw		[->, thick] 	(2,6) -- (3,5) ;								
				\draw		[thick]		(3,5) -- (4,4);	
				\draw		[->, thick]	(2,2) -- (3,3);								
				\draw		[thick]		(3,3) -- (4,4);
				\draw		[->,thick]	(4,4) -- (5,3);								
				\draw		[thick]  		(5,3) -- (6,2);
				%
				%
				\draw [  thick, blue, decorate, decoration={snake,amplitude=0.5mm}] (4,4)  -- (8,4);				
				\node at (1.5,4) {$0 < \arg (\zeta-\xi) < 2\pi$};
				%
				%
				\node at (8.5,8.5)  	{$\Sigma_1$};
				\node at (-0.5,8.5) 	{$\Sigma_2$};
				\node at (-0.5,-0.5)	{$\Sigma_3$};
				\node at (8.5,-0.5) 	{$\Sigma_4$};
				%
				%
				\node at (7,7) {${#3}$};						
				\node at (1,7) {${#4}$};						
				\node at (1,1) {${#5}$};						
				\node at (7,1) {${#6}$};						
			\end{tikzpicture}
			\label{#2}
		\end{figure}
	}
	
	%
	%

\section{Introduction}
In this paper we apply the inverse scattering transform (IST) to study the long time dynamics of the defocusing nonlinear Schr\"odinger (NLS) equation on the real line with finite density initial data:
\begin{equation}
 \label{eq:NLS}
\left\{\begin{array}{cc}
			iq_{t}+q_{xx}-2\left(|q|^{2}-1\right)q =0, \\
			q(x,0)=q_{0}(x), \\
   \lim_{x\rightarrow\pm\infty}q_{0}(x)=\pm 1.
		\end{array}\right.
\end{equation}
\subsection{Background and previous results.}
 We first mention the following global-wellposedness result from \cite{G}:
\begin{theorem}
\label{thm:wp-1}
For any $q_0\in E=\left\{f: f \in H_{\mathrm{loc}}^1, \nabla f \in L^2, 1-|f|^2 \in L^2\right\}$,  there exists a unique $w\in  \mathcal{C}\left(\mathbb{R}, H^1\left(\mathbb{R}^n\right)\right)$ such that $q=q_0+w$ solves \eqref{eq:NLS}.
\end{theorem}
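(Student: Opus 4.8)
The plan is to reduce \eqref{eq:NLS} to a genuine $H^1$ Cauchy problem by peeling off a smooth, \emph{data-adapted} profile $v$ carrying the boundary values $\pm1$ (so that the new unknown $u:=q-v$ lies in $H^1$), to solve the reduced equation by a contraction argument in $C([0,T];H^1)$, and then to globalize using the conserved energy together with an $L^2$ Gronwall estimate.

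First I would record two elementary facts about $q_0\in E$ in one space dimension. (i) $q_0\in L^\infty$: since $q_0'\in L^2$ the function $q_0$ is uniformly continuous, so $q_0(x)\to\pm1$ forces boundedness on the two ends, while boundedness on compact sets is immediate from $q_0\in H^1_{\mathrm{loc}}$. (ii) One can choose a smooth profile $v$ with $q_0-v\in H^1$: take $v:=\rho_\delta*q_0$ for a standard mollifier. Then $v$ is smooth with $\|v\|_{L^\infty}\le\|q_0\|_{L^\infty}$ and $v',v'',\ldots\in L^2$, $v(x)\to\pm1$; moreover $q_0-v\in H^1$, because $\nabla(q_0-v)=q_0'-\rho_\delta*q_0'\in L^2$ while the translation estimate $\|q_0-q_0(\cdot-y)\|_{L^2}\le|y|\,\|q_0'\|_{L^2}$ gives $\|q_0-v\|_{L^2}\le\delta\,\|q_0'\|_{L^2}\!\int\!|z|\,\rho(z)\,dz$; and then $1-|v|^2\in L^2$ follows from $|q_0|^2-|v|^2=(|q_0|-|v|)(|q_0|+|v|)$ with $|q_0|-|v|\in L^2$, $|q_0|+|v|\in L^\infty$. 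With such a $v$ fixed and $q=v+u$, the equation reads $iu_t+u_{xx}=F(u)$, where $F(u):=2(|v+u|^2-1)(v+u)-v_{xx}$ and $u(0)=q_0-v\in H^1$; here $F(u)$ is the fixed function $2(|v|^2-1)v-v_{xx}\in H^1$ plus a polynomial in $u,\bar u$ of degrees $1$ through $3$ with bounded smooth coefficients built from $v,\bar v$. Since $H^1(\mathbb{R})\hookrightarrow L^\infty$ and $H^1(\mathbb{R})$ is stable under multiplication by such coefficients, $F:H^1\to H^1$ is Lipschitz on bounded sets, and because $e^{it\partial_{xx}}$ is unitary on $H^1$ the Duhamel map $u\mapsto e^{it\partial_{xx}}u_0-i\int_0^te^{i(t-s)\partial_{xx}}F(u(s))\,ds$ is a contraction on a ball of $C([0,T];H^1)$ for $T=T(\|u_0\|_{H^1})$ small; this produces a unique maximal solution. (In dimension $n>1$, where $F(u)$ lies only in $L^2$, one would instead close the estimate with Strichartz norms.)

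To continue this solution to all times I would use conservation of the energy $E(q(t))=\|q_x(t)\|_{L^2}^2+\||q(t)|^2-1\|_{L^2}^2=E(q_0)$ — established along $H^1$-solutions by the standard approximation argument — which at once bounds $\|u_x(t)\|_{L^2}\le\sqrt{E(q_0)}+\|v'\|_{L^2}$ uniformly in $t$. The quantity $\|u(t)\|_{L^2}$, however, is \emph{not} controlled by the energy: for non-vanishing boundary data there is no conserved mass, and one can exhibit finite-energy perturbations that are not in $L^2$. Instead I would compute its time derivative directly: with $q=v+u$ one finds the identity $\tfrac{d}{dt}\|u\|_{L^2}^2=-4\!\int\!(|q|^2-1)\,\Im(\bar v u)-2\,\Im\!\int\!\bar u\,v_{xx}$, whence $\tfrac{d}{dt}\|u\|_{L^2}^2\le\bigl(4\|v\|_{L^\infty}\sqrt{E(q_0)}+2\|v_{xx}\|_{L^2}\bigr)\|u\|_{L^2}$ and therefore $\|u(t)\|_{L^2}\le\|u_0\|_{L^2}+C(q_0)\,|t|$, growing at most linearly. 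Combined with the energy bound this rules out finite-time blow-up of $\|u(t)\|_{H^1}$, so the solution is global; finally $w:=q-q_0=u-(q_0-v)\in C(\mathbb{R};H^1)$, and uniqueness of $w$ follows from the uniqueness in the contraction argument together with continuation. The step I expect to be the main obstacle is precisely this globalization — recognizing that $\|u\|_{L^2}$ is governed not by the conserved energy but only by a Gronwall inequality with at-most-linear growth — with the rigorous justification of exact energy conservation along merely $H^1$-regular solutions a close second.
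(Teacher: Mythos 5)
The paper does not prove this statement: Theorem \ref{thm:wp-1} is quoted verbatim from Gallo \cite{G}, so there is no internal proof to compare yours against. Judged on its own, your argument is a correct, essentially self-contained proof in the one-dimensional setting that the rest of the paper actually uses, and it follows the same circle of ideas as Zhidkov/Gallo: peel off a profile carrying the boundary values so that the remainder lives in $H^1$, run a contraction in $C([0,T];H^1)$ (legitimate in $1$D since $H^1(\mathbb{R})$ is an algebra embedded in $L^\infty$), and globalize via conservation of the Ginzburg--Landau energy together with a Gronwall bound on $\|u\|_{L^2}$. The mollified profile $v=\rho_\delta*q_0$ is a clean choice: the translation estimate $\|q_0-q_0(\cdot-y)\|_{L^2}\le |y|\,\|q_0'\|_{L^2}$ does put $q_0-v$ in $H^1$, and $v_{xx}=\rho_\delta'\ast q_0'$ together with $v_{xxx}=\rho_\delta''\ast q_0'$ makes the forcing term $2(|v|^2-1)v-v_{xx}$ land in $H^1$ as required for the fixed point. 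Your identity for $\tfrac{d}{dt}\|u\|_{L^2}^2$ is in fact already rigorous at $H^1$ regularity, since $\langle u_{xx},u\rangle_{H^{-1},H^1}=-\|u_x\|_{L^2}^2$ is real; the one step that genuinely needs an approximation argument is energy conservation, which you correctly flag. Two caveats to record: (i) the theorem as stated is on $\mathbb{R}^n$ and your proof covers only $n=1$ (you acknowledge this; for $n\ge 2$ one needs Strichartz estimates, and $q_0\in L^\infty$ is no longer automatic from membership in $E$), which is harmless here since the paper only invokes the one-dimensional case; (ii) for uniqueness of $w$ one should add the standard remark that any distributional solution with $w\in C(\mathbb{R},H^1)$ satisfies the Duhamel equation for $u=w+(q_0-v)$, so that the contraction's uniqueness and the continuation argument actually apply to an arbitrary competitor.
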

Recently, Recently continuous families of conserved energies have been constructed for the defocussing NLS in \cite{KL} and the following global-wellposedness result in low regularity has been obtained:
\begin{theorem}
    Equation \eqref{eq:NLS} is globally well-posed in $$X^0=\left\{q \in L_{\mathrm{loc}}^2(\mathbb{R}):|q|^2-1 \in H^{-1}(\mathbb{R}), \quad \partial_x q \in H^{-1}(\mathbb{R})\right\} / \mathbb{S}^1.$$
\end{theorem}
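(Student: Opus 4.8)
The plan, following \cite{KL}, is to promote the finite-energy flow of Theorem~\ref{thm:wp-1} to the rougher class $X^0$ by means of the continuous family of conserved energies together with a density/approximation argument. First I would pin down the notion of solution: at the regularity $|q|^2-1,\ \partial_x q\in H^{-1}$ neither $q_{xx}$ nor $(|q|^2-1)q$ is a function, so \eqref{eq:NLS} is read in the distributional (equivalently Duhamel, after subtracting a fixed profile with the prescribed limits $\pm1$) sense, and the quotient by $\mathbb{S}^1$ records the constant phase left free by the boundary conditions; a solution in $X^0$ is then, by definition, a curve in $X^0/\mathbb{S}^1$ that arises as a limit of finite-energy solutions and solves the equation in this weak sense.

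The analytic engine is the transmission coefficient $a(\lambda)$ of the Zakharov--Shabat/AKNS Lax operator attached to \eqref{eq:NLS} with finite-density data, whose spectral parameter lives on the two-sheeted surface forced by the nonzero background. From \cite{KL} I would import two facts. First, invariance: $\Re\log a(\lambda;q(t))$, suitably regularized, is constant along the finite-energy flow, and the coefficients of its expansion in $\lambda$ reproduce the classical conservation laws (mass, momentum, Hamiltonian). Second, coercivity: an appropriate average of these conserved energies over a window of $\lambda$ is comparable, uniformly on bounded sets of $X^0$, to the metric $\|\,|q|^2-1\,\|_{H^{-1}}^2+\|\partial_x q\|_{H^{-1}}^2$. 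These are exactly the \emph{continuous families of conserved energies} referred to above, and I would quote them rather than rederive the multilinear expansion of $\log a$.

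Granting this, the global bound follows: for smooth $q_0$ compatible with the boundary conditions (dense in $X^0$) Theorem~\ref{thm:wp-1} gives a global solution, and invariance plus coercivity yield $\|q(t)\|_{X^0}\le C(\|q_0\|_{X^0})$ for all $t$. For general $q_0\in X^0$ pick smooth $q_0^{(n)}\to q_0$ in $X^0$; the solutions $q^{(n)}$ are then uniformly bounded in $\mathcal{C}(\mathbb{R};X^0)$, so along a subsequence they converge weak-$*$, and using the equation to bound $\partial_t q^{(n)}$ in a weaker norm one passes to the limit by an Aubin--Lions argument, obtaining a solution $q$ in $X^0/\mathbb{S}^1$. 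Strong convergence $q^{(n)}(t)\to q(t)$, hence continuity $t\mapsto q(t)$ and continuous dependence on data, comes from combining weak convergence with convergence of the conserved energies $E(\lambda;q^{(n)}(t))\to E(\lambda;q(t))$ via the Radon--Riesz property of the (Hilbert-type) $X^0$-metric; uniqueness follows because two $X^0$-solutions with the same data are limits of the same approximating sequence, i.e.\ by the contraction encoded in that metric, or alternatively by a weak--strong uniqueness argument against the finite-energy flow.

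The hard part is the coercivity input in the second paragraph: controlling the $X^0$-metric by the conserved energies at the \emph{endpoint} regularity $H^{-1}$ and in the presence of nonzero boundary conditions. The series $\log a(\lambda)=\sum_k c_k(\lambda)\,Q_k(q)$ is infinite and multilinear, and near this regularity it cannot be estimated term by term; one needs the Koch--Tataru-type resummation with exponential bounds, adapted to the two-sheeted spectral curve of the finite-density problem. This is precisely what \cite{KL} carries out, so in the present paper I would isolate and cite the statement in the form needed above rather than reproduce it.
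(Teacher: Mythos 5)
The paper does not prove this statement at all: it is quoted verbatim as a background result from \cite{KL}, with no argument supplied. Your outline is therefore not in conflict with anything in the paper, and it is a fair reconstruction of the strategy of the cited reference --- conserved energies built from the transmission coefficient of the Zakharov--Shabat operator on the two-sheeted spectral curve, their coercivity with respect to the $X^0$ metric, and a density/compactness argument to pass from the finite-energy flow of Theorem~\ref{thm:wp-1} to rough data --- while correctly locating the genuinely hard content (the resummed multilinear expansion of $\log a$ and its coercivity at endpoint regularity) in \cite{KL}, exactly where the paper also leaves it. The one step of your sketch that is looser than the rest is uniqueness: defining $X^0$-solutions as limits of finite-energy solutions makes uniqueness equivalent to independence of the limit from the approximating sequence, which requires the Lipschitz/contraction property of the flow in the energy-controlled metric rather than the circular remark that two solutions ``are limits of the same sequence''; that property is again part of what must be imported from \cite{KL}. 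Since the paper offers no proof to compare against, citing the reference for these inputs, as you propose, is the appropriate resolution.
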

As is given in \cite{KS}, Equation \eqref{eq:NLS} describes the propagation of waves through a condensate of constant density and is physically relevant in nonlinear optics , fluid mechanics and Bose–Einstein condensation. Comparing with the defocussing NLS that has vanishing boundary at $\pm \infty$ \cite{DZ03}, rather than purely scattering solutions, solutions to \eqref{eq:NLS} exhibit richer dynamics and possess non-trivial solitary waves, namely, the dark solitons.
A rich list of literature has been devoted to study various stability properties of dark/black solitons of \eqref{eq:NLS} of the form:
\begin{equation}
    v_c(x) \equiv \sqrt{\frac{2-c^2}{2}} \tanh \left(\frac{\sqrt{2-c^2}}{2} x\right)-i \frac{c}{\sqrt{2}} 
\end{equation}
and $v_c(x)$ becomes the black soliton $v(x)=\tanh(x)$.
In \cite{BGSA}, the orbital stability of the dark soliton has been proven based on the variation principle. The orbital stability of the black soliton is later proven in \cite{BGSA2}. Concerning the asymptotic stability, the authors of \cite{BGS} and \cite{GS} established the asymptotic stability of dark soliton and black soliton in the energy space respectively.  Also in \cite{BGS14},  the authors studied stability properties of a
sum of dark solitons when their speeds are mutually distinct and distinct from zero
and when the solitons are initially well-separated and spatially ordered according to their speeds.

In a seminal paper \cite{ZS}, the inverse scattering transform (IST) for the NLS equation is developed. Using IST, orbital stability of dark solitons is proven for sufficiently smooth
and decaying perturbations in \cite{GZ}. In \cite{V}-\cite{V2}, assuming \textit{Schwartz} class initial data, using IST and the celebrated \textit{Deift-Zhou} nonlinear steepest descent method \cite{DZ93}, the author computed
both the leading and first correction terms in the asymptotic expansion of the solution $q(x,t)$ and the \textit{partial mass} $\int_{ \pm \infty}^x\left(1-|q(x, t)|^2\right) d x $ in the presence of multi-solitons. 

More recently, in \cite{CJ}, using the $\overline{\partial}$ generalization of the nonlinear steepest descent method, see \cite{DMM18} for detailed expository, the authors derive the leading
order approximation and error bound to the solution for large times in the soliton region of space-time $|x| < t$ for a
 class of initial data whose difference from the non-vanishing background
possesses a fixed number of finite moments and derivatives. And as a corollary an asymptotic stability result for
initial data that are sufficiently close to the $N$-dark solitons follows. More explicitly, the authors assume
\begin{equation}
    q_0-\tanh x\in H^{4,4}(\bbR)
\end{equation}
where
 \begin{equation}
     H^{m,n}(\mathbb{R})=\lbrace f(x): f\in H^m(\bbR)\cap L^{2,n}(\bbR) \rbrace.
 \end{equation}
\subsection{Main results.} In the current paper, we aim to investigate the large time behavior of the partial mass of \eqref{eq:NLS} through the IST method:
 \begin{equation}
     \lim_{t\to\infty} \int_{I} \left(1-|q(x,t)|^2 \right) dx
 \end{equation}
 where $I\subset\mathbb{R}$ is a compact set.
  To facilitate our understanding of the main theorem, for $m,n,k\geq 1$, we define
	\begin{equation}
		\label{space:initial}
		\mathcal{I}^{m,n}_k:= \left\{\begin{array}{cc}
			f(x)-\tanh(x)\in L^{2,m}, \\
			f^{(k)}\in L^{2,n}
		\end{array}\right.,
	\end{equation}
	where 
	$$\|f\|_{L^{2,m}}:=\|(1+|\cdot|)^m f(\cdot)\|_{L^2}.$$
 We may call the function space $\mathcal{I}_k^{m,n}$ \textit{weighted energy space} where the conserved \textit{Ginzburg-Landau} energy of \eqref{eq:NLS} is given by
 \begin{equation}
     \label{E:GL}
     E(t):=\int_{\mathbb {R}} |q_x(x,t)|^2+\left( 1-|q(x,t)|^2\right)^2dx=E(0).
 \end{equation}
 \begin{theorem}
 \label{thm: asym}
     Given $q_0 \in \mathcal{I}^{3,1}_1$, the partial mass of the defocusing NLS equation \eqref{eq:NLS} admit the following large-$t$ asymptotics:
     \begin{align}
         \int^{+\infty}_x \left(|q(s,t)|^2-1\right)ds &=\sum^{N}_{k=1}i\bar{z_k}\left[\text{sol}\left(x-x_k,t;z_k\right)-1\right]-\frac{1}{2\pi}\int_0^{\infty}\log\left(1-|r(z)|^2\right)ds+\mathcal{O}\left(t^{-1}\right).
     \end{align}
     and
     \begin{align}
          \int^x_{-\infty}\left(|q(s,t)|^2-1\right)ds=&-\sum^{N}_{k=1}i\bar{z_k}\left[\text{sol}\left(x-x_k,t;z_k\right)-1\right]-\frac{1}{2\pi}\int^0_{-\infty}\log\left(1-|r(z)|^2\right)ds+\mathcal{O}\left(t^{-1}\right) \\
          \nonumber
    &-2 \sum_{k=1}^N \sin \left(\arg z_k\right).
     \end{align}
 \end{theorem}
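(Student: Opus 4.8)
The plan is to obtain the asymptotics via a standard Riemann--Hilbert analysis combined with a careful tracking of the reconstruction formula for the partial mass, keeping the regularity/decay budget as low as possible. First I would recall the Riemann--Hilbert problem (RHP) for the $2\times 2$ matrix $m(z;x,t)$ associated to \eqref{eq:NLS}: its jump across $\mathbb R$ is built from the reflection coefficient $r(z)$, it has poles at the discrete spectrum $\{z_k\}_{k=1}^N$ with prescribed residue (norming) conditions, and it tends to $I$ as $z\to\infty$ with the subleading $1/z$ coefficient encoding both $q(x,t)$ and the partial mass $\int_{\pm\infty}^x(1-|q|^2)\,ds$. The key reconstruction identity is that the $(1,1)$ (or off-diagonal) entries of the $z\to\infty$ and $z\to 0$ expansions of $m$ give, respectively, $q$ and the partial-mass integral plus a phase term; this is where the extra constant $-2\sum_k\sin(\arg z_k)$ in the second formula will come from, as a boundary contribution at $x\to-\infty$ that does not appear in the $+\infty$ normalization.

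Next I would perform the usual Deift--Zhou steepest-descent deformation. Since $I$ is compact and $t\to\infty$, every point $x\in I$ lies in the soliton region $|x|<t$ with stationary phase point $z_0=z_0(x/t)$ bounded away from $0$; one conjugates by the scalar function $\delta(z)$ solving the scalar RHP with jump $1-|r|^2$ on $(-\infty,z_0)$ (or the appropriate sign convention here), opens lenses along the steepest-descent contours using a $\dbar$-extension of $r$, and splits off the $N$ solitons as an explicit rational (Blaschke-type) factor. The residual $\dbar$-problem and the local parametrix near $z_0$ contribute $\mathcal O(t^{-1})$ once we know $r\in H^1$ with enough decay; the $q_0\in\mathcal I^{3,1}_1$ hypothesis is exactly calibrated so that the direct scattering map lands $r$ in a space where the $\dbar$-region estimate and the resolvent bounds are valid. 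The soliton factor produces the sums $\sum_k i\bar z_k[\operatorname{sol}(x-x_k,t;z_k)-1]$ (with opposite signs for the two half-line integrals, reflecting that the partial masses from $\pm\infty$ differ by the total renormalized mass), and the $\delta$-function's value at $z=0$ versus $z=\infty$ produces the logarithmic integrals $-\frac1{2\pi}\int_0^{\pm\infty}\log(1-|r|^2)\,ds$ via the Plemelj formula for $\log\delta$.

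The main obstacle, and the technical heart of the paper, is establishing that the error is genuinely $\mathcal O(t^{-1})$ under the \emph{weakened} assumption $q_0\in\mathcal I^{3,1}_1$ rather than $q_0-\tanh x\in H^{4,4}$ as in \cite{CJ}. Here I would follow the strategy advertised in the abstract: rather than demanding pointwise decay/smoothness of $r$ strong enough to bound the $\dbar$-integrals directly, I would use a uniform resolvent bound for the relevant singular integral operator $\One-\calC_w$ together with a density/approximation argument in the spirit of the Riemann--Lebesgue lemma --- approximate $r$ by Schwartz functions, for which the full asymptotics (with $\mathcal O(t^{-1})$) is classical, and control the difference uniformly in $t$ by the resolvent bound, so the error estimate passes to the limit. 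Care is needed to check that the discrete data $(z_k, \text{norming constants})$ depend continuously on $q_0$ in the relevant topology so the soliton sums are stable under this approximation, and that the stationary-phase point $z_0$ ranging over the compact set $\{x\in I\}$ does not approach $0$ or collide with any $z_k$, which would spoil uniformity; restricting to compact $I$ and $t$ large is what makes this book-keeping go through.
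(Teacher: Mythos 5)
Your first two paragraphs match the paper's actual route: the asymptotics come from the reconstruction formula \eqref{def:int} by running the $\overline{\partial}$-nonlinear steepest descent of Cuccagna--Jenkins on Problem \ref{RHP:m} (conjugation by the scalar $\delta$, lens opening with a $\overline{\partial}$-extension, separation of the soliton factor), and the second display is deduced from the first via the total-mass identity $\int_{\mathbb{R}}(|q|^2-1)\,dx=-2\sum_k\sin(\arg z_k)-\frac{1}{2\pi}\int_{\mathbb{R}}\log(1-|r|^2)\,dz$, which is exactly where the constant $-2\sum_{k}\sin(\arg z_k)$ comes from. (One small misreading: the theorem is stated for $x$ in the soliton region with $x-2\Re(z_k)t=\mathcal{O}(1)$, not for $x$ confined to a compact set; the compact-interval setting belongs to Theorem \ref{thm: main}.)

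The genuine gap is in your third paragraph: a density/approximation argument of Riemann--Lebesgue type cannot deliver the quantitative rate $\mathcal{O}(t^{-1})$. If you approximate $r$ by smooth $r_n$, the uniform resolvent bound controls the difference of the two partial masses by $\|r-r_n\|_{L^2}=\varepsilon_n$ \emph{uniformly in $t$}, while the steepest descent applied to $r_n$ yields an error $\mathcal{O}_n(t^{-1})$ whose implied constant depends on $n$ (on the extra smoothness of $r_n$). The combined error $\varepsilon_n+\mathcal{O}_n(t^{-1})$ is $o(1)$ after a diagonal choice of $n=n(t)$, but it is never $\mathcal{O}(t^{-1})$; this is precisely why the paper reserves the approximation/resolvent argument for Theorem \ref{thm: main}, whose conclusion is only a qualitative limit. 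For Theorem \ref{thm: asym} the paper's mechanism for weakening the hypotheses is entirely different: Proposition \ref{prop:r} shows \emph{directly}, by the Fourier/Plancherel analysis of the Volterra equations \eqref{Volterra; origin} near $z=0$, $z=\pm1$, and $z=\infty$, that $q_0\in\mathcal{I}^{3,1}_1$ already places $r$ in $\breve H^{1,1}_0$ (i.e.\ $r\in H^{1,1}$ with $r''\in L^2$ near $\pm1$ and the correct behavior at the origin), which is the regularity the $\overline{\partial}$ method of \cite{CJ} needs to close the $\mathcal{O}(t^{-1})$ estimate with no approximation step at all. To repair your argument you would either prove such a direct-scattering regularity statement or accept a weaker $o(1)$ error.
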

 The explicit form of $sol (x-x_k, t; z_k)$ and $x_k$ are given in \eqref{soliton-tanh} and \eqref{xk} respectively.
 \begin{remark}
     We can reproduce exactly the same asymptotic formula for $q(x,t)$ as in \cite[Theorem 1.1]{CJ}. We do not state the explicit formulas here for brevity.
 \end{remark}
 Our main result is the following:
 \begin{theorem}
\label{thm: main}
      Given $q_0 \in \mathcal{I}^{2,0}_{0}$, in the absence of dark soliton, for any fixed $x_1, x_2\in \mathbb{R}$, we have that 
      \begin{equation}
      \label{lim:main}
           \lim_{t\to\infty} \int_{x_1}^{x_2} \left(1-|q(x,t)|^2 \right) dx=0.
      \end{equation}
 \end{theorem}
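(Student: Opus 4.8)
Since there are no solitons, the reflection coefficient $r(z)$ carries all the scattering data, and the partial mass is recovered from the Riemann–Hilbert problem (RHP) associated to \eqref{eq:NLS}. I would proceed as follows. First, reduce the claim \eqref{lim:main} to an asymptotic statement about the solution of the normalized RHP: writing the partial mass $\int_{x_1}^{x_2}(1-|q|^2)\,dx$ as a difference of two "partial-mass from $+\infty$" integrals, it suffices to show that each $\int_x^{+\infty}(|q(s,t)|^2-1)\,ds$ converges, as $t\to\infty$, to a limit \emph{independent of $x$} on the compact set $[x_1,x_2]$. In the soliton-free case the candidate limit from Theorem~\ref{thm: asym} is the $x$-independent constant $-\tfrac{1}{2\pi}\int_0^\infty\log(1-|r(z)|^2)\,dz$, so the content is the vanishing of the $x$-dependent part together with the $\mathcal{O}(t^{-1})$ error — but crucially this must be done under the \emph{weaker} hypothesis $q_0\in\mathcal{I}^{2,0}_0$, where the steepest-descent error estimates behind Theorem~\ref{thm: asym} are not available.

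**Key steps.** Step 1: set up the direct scattering map and verify that $q_0\in\mathcal{I}^{2,0}_0$ suffices to define $r(z)$ with $r\in H^1$ (or at least $L^2$ with enough control near the stationary-phase points $z=\pm z_0$, $z_0=z_0(x/t)$) and $1-|r|^2\geq c>0$; the two derivatives of weight and the one $L^{2}$-derivative in $\mathcal{I}^{2,0}_0$ are exactly tuned to put $r$ in a space where the RHP is solvable and the relevant Cauchy operators are bounded. Step 2: express $\int_x^{+\infty}(|q|^2-1)\,ds$ through the large-$z$ expansion of the RHP solution $m(z;x,t)$ — the partial mass appears as an off-diagonal/diagonal entry of the subleading coefficient $m_1(x,t)$ in $m = I + m_1/z + \cdots$, evaluated against the background. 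Step 3: perform the standard conjugation/deformation ($\delta$-function factorization to remove the jump off the real axis near the stationary points, opening lenses) to bring the RHP to one whose jump matrix is $I$ plus an error that is \emph{uniformly small in $L^2\cap L^\infty$ on the real line as $t\to\infty$}, uniformly for $x/t$ in a neighborhood of $0$ (which covers $x\in[x_1,x_2]$, $t$ large). Step 4 — the heart of the argument: instead of extracting $t^{-1/2}$ parametrix contributions, apply the \emph{uniform resolvent bound} for the singular integral operator $(1-\mathcal{C}_w)^{-1}$ advertised in the abstract, together with a Riemann–Lebesgue-type approximation: approximate $r$ by a smoother/compactly-supported $\tilde r$, for which the oscillatory jump decays, and control the difference by the resolvent bound; conclude that $m_1(x,t)\to$ (the background contribution) $+\,$(the $x$-independent $\delta$-integral term), with the $x$-dependent pieces $\to 0$.

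**Main obstacle.** The hard part is Step 4 under the low-regularity hypothesis: without the $H^{4,4}$-type decay/smoothness of \cite{CJ}, one cannot invoke the $\dbar$-steepest-descent machinery to get a clean $t^{-1/2}$ local parametrix with $\mathcal O(t^{-1})$ remainder, so one must replace quantitative stationary-phase asymptotics with a \emph{qualitative} limit. This means (i) establishing the uniform (in $t$ and in $x/t$ near $0$) invertibility of $1-\mathcal{C}_w$ on $L^2(\mathbb R)$ — a resolvent bound that does not degenerate as the jump becomes oscillatory — and (ii) showing the oscillatory integrals defining $m_1$ tend to their limits by a Riemann–Lebesgue argument applied after the approximation $r\mapsto\tilde r$, with the error in the approximation absorbed by the resolvent bound and the uniform smallness of the deformed jump. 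A secondary technical point is checking that the $\delta$-factorization and lens openings remain valid with $r$ only in $H^1$ (so that $\log(1-|r|^2)$ and the Cauchy integral defining $\delta$ are well-defined and Hölder on the contour), and that the convergence is genuinely \emph{uniform} over the compact $x$-interval so that the two one-sided partial masses can be subtracted to kill the background term and yield the clean limit $0$.
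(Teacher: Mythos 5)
Your strategy is essentially the paper's: write the partial mass via the Beals--Coifman representation, approximate by smoother data for which the $\overline{\partial}$-steepest-descent asymptotics of Theorem \ref{thm: asym} are already available, control the difference of the two Beals--Coifman solutions using the uniform resolvent bound \eqref{op:bound} together with the second resolvent identity, and exploit the fact that in the solitonless regime the limiting expression is independent of $x$, so the two one-sided partial masses cancel over $[x_1,x_2]$. The one structural difference is your Step 3: the paper never deforms the low-regularity RHP at all --- it works with the undeformed Beals--Coifman equation for $r$ and applies the $\delta$-factorization and lens opening only to the approximants $r_n$, which sidesteps exactly the ``secondary technical point'' you raise (under $q_0\in\mathcal{I}^{2,0}_0$ one only gets $r\in L^2\cap\mathcal{C}$, not $H^1$, so conjugating the original jump would be delicate; also note $1-|r|^2$ vanishes at $z=\pm1$ rather than being bounded below). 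A further small but useful choice in the paper is to approximate at the level of initial data, $q_{0,n}\in\mathcal{I}^{3,1}_1$ with $q_{0,n}\to q_0$ in $L^{2,2}$, and then invoke the Lipschitz continuity of the direct scattering map (Proposition \ref{prop:L2,2}) to get $r_n\to r$ in $L^2$; this guarantees the approximating reflection coefficients are genuine scattering data to which the asymptotic formula \eqref{q:asy} applies, which your direct mollification of $r$ would still need to justify.
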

The purpose of this paper is two-fold:
\begin{itemize}
    \item[1.] We will re-investigate the regularity properties of the direct scattering map. We show that even in the presence of singularities at $\lbrace 0, \pm 1 \rbrace$, we can still make use of the \textit{Fourier}-type argument developed in \cite{Zhou98}. The regularity and decay assumptions on the reflection coefficient necessary for the employment of the nonlinear steepest descent can be deduced from initial condition belonging to function space that is strictly larger than $\Sigma_4$ introduced by \cite{CJ}. Thus we reproduce the $N$-soliton asymptotic stability result of \cite{CJ} using weaker metric.
    \item[2.] The quantity given by \eqref{lim:main} is physically relevant in the sense that it implies that along any finite interval $\mathcal{I}\subset \bbR$, in the absence of black soliton, the density $|q(x,t)|^2$ is arbitrarily close to $1$  as $t\to \infty$. This rules out the possibility of zeros of $|q(x,t)|^2$  except on a set of measure zero. If we further assume that the initial condition $q_0\in \mathcal{I}^{2,0}_1$, then by Theorem \ref{thm:wp-1}, the solution is continuous thus Theorem \ref{thm: main} rules out the possibility of zeros on any bounded set. Physically $|q(x,t)|^2=0$ corresponds to vacuum. And the presence of vacuum prevents the construction and application of the \textit{ Madelung} transform. For more discussions we refer the reader to \cite{CDS}. We establish this result through a \textit{Riemann-Lebsgue} type argument without using the nonlinear steepest descent method thus further relax the decay assumption and remove regularity assumption on the initial condition. The first author developed this argument in a previous joint paper \cite{CLL} to study the asymptotic stability of \textit{sine-Gordon} kinks in some weighted energy space. The key ingredient of the proof is a uniform bound on the norm of the resolvent operator obtained from a continuity-compactness argument first developed in \cite{P16}.
\end{itemize}

	\subsection{List of notations} We give a list of notations for the reader's convenience:\\\\
	\begin{center}
		\begin{tabular}{|c|c|}
			\hline
			Notation     &Summary   \\
			\hline
			$x$, $y$ & The space variable;\\
			\hline
			$\langle x \rangle$ & \tabincell{c}{$\sqrt{1+|x|^2}$}\\
			\hline
			$\zeta(z)$ & The spectral parameter of the \textit{Lax} pair;\\
   \hline
   $\mathcal{F}(f)$ & the \textit{Fourier} transform of $f$;\\
			\hline
			$\|f\|_{L^p_x\left(X;L^q_y(Y)\right)} $& $\left(\int_X\left(\int_Y|f(x,y)|^q dy\right)^{\frac {p}{q}}dx\right)^{\frac {1}{p}}$;\\
			\hline
			$I$ &  $2\times 2$ identity matrix;\\
			\hline
			$\textbf{1}$ & The identity operator;\\
			\hline
			$\psi^{(\pm)}_{1,2}$     &\textit{Jost} solutions of the  \textit{Lax } pair;\\
			\hline
			$m^{(\pm)}_{1,2}$     &Normalized solution of the  \textit{Lax} pair;\\
			\hline
			\tabincell{c}{$S(z)=\left(\begin{array}{cc}
			a(z) & \overline{b(z)} \\
			b(z) & \overline{a(z)}
		\end{array}\right)$} &\tabincell{c}{ The scattering matrix };\\
  \hline
  $z_k$ & Simple zeros of $a(z)$ in $\mathbb{C}^+$;\\
			\hline
   $M(z; x,t)$ & Solution to the \textit{Riemann-Hilbert } problem (RHP);\\
   \hline
$M_\pm(z; x,t)$ & Boundary values for the solutions to RHP;\\
   \hline 
   $\mu(z; x,t)$ & The \textit{Beals-Coifman} solution associated to the RHP\\
   \hline
   $C_\pm f$ & $\lim_{z\to \Sigma_\pm} \int \frac{f(s)}{s-z}\frac{ds}{2\pi i}$ \\
   \hline
		\end{tabular}
	\end{center}
	

	\section{Direct and Inverse scattering}
	\subsection{The direct scattering}
	Following the formalism in \cite{CJ}, the defocusing nonlinear Schr\"odinger (NLS) equation \eqref{eq:NLS} is the compatibility condition for the following Lax pair:
	\begin{equation}
		\label{eq:lax}
		\begin{array}{c}
			\psi_x=\mathcal{L}\psi \\
			i\psi_t=\mathcal{B}\psi
		\end{array}
	\end{equation}
	in the sense that
	\begin{equation}
		i(i\mathcal{L}_{t}-\mathcal{B}_{x}+[\mathcal{L}, \mathcal{B}])=0.
	\end{equation}
	The $\mathcal{L}$ and $\mathcal{B}$ operators are given by
	\begin{subequations}
		\begin{align}
			\mathcal{L}&=i\sigma_3(Q-\lambda(z)),
			\\
			\mathcal{B}&=-2i\lambda(z)\mathcal{L}-(Q^2-I)\sigma_3+iQ_x	
		\end{align}
	\end{subequations}
where
    \begin{equation}\nonumber
        Q=Q(x,t)=\left(\begin{array}{cc}
		0      &    \overline{q(x,t)}\\
		q(x,t) &    0
		\end{array}\right),
		\qquad\lambda(z)=\frac{1}{2}(z+z^{-1})
	\end{equation}
	and $\sigma_3$ is the third \textit{Pauli} matrix:
 \begin{equation}
    \sigma_1=\twomat{0}{1}{1}{0},\qquad
 \sigma_2=\twomat{0}{-i}{i}{0},\qquad
	\sigma_3=\twomat{1}{0}{0}{-1}.
 \end{equation}
 
	We now define the simultaneous solutions ($2\times 2 $ matrix-valued \textit{Jost functions})
	$$\psi^{(\pm)}=\left[ \psi_{1}^{(\pm)}, \psi_{2}^{(\pm)} \right]$$
	to the Lax pair \eqref{eq:lax} in the column-wise form and prescribe the boundary conditions at $\pm\infty$:
	\begin{subequations}
		\begin{align}
			\psi_{1}^{(-)}(x;z) &\sim\left(\begin{array}{l}
				1 \\
			    -z^{-1}
			\end{array}\right) e^{-ix\zeta(z)}, \quad \psi_{2}^{(-)}(x ;z) \sim\left(\begin{array}{l}
				-z^{-1} \\
				1
			\end{array}\right) e^{ix\zeta(z) } \quad \text {as } x \rightarrow-\infty,
			\\
			\psi_{1}^{(+)}(x ; z) &\sim\left(\begin{array}{l}
				1 \\
				z^{-1}
			\end{array}\right) e^{-ix\zeta(z)}, \quad \psi_{2}^{(+)}(x ;z) \sim\left(\begin{array}{l}
				z^{-1} \\
				1
			\end{array}\right) e^{ix\zeta(z) } \quad \text {as } x \rightarrow+\infty,
		\end{align}
	\end{subequations}
	where 
	\begin{equation}
		\zeta(z)=\frac{1}{2}(z-z^{-1}).
	\end{equation}
	We further normalize the \textit{Jost functions} at $x=\pm\infty$:
	\begin{subequations}
		\begin{align}
			\label{jost; normal-1}
			m_{1}^{(\pm)}(x;z)&=\psi_{1}^{(\pm)}(x;z) e^{ix\zeta(z) },\\
			\label{jost; normal-2}
			m_{2}^{(\pm)}(x;z)&=\psi_{2}^{(\pm)}(x;z) e^{-ix\zeta(z) }
		\end{align}
	\end{subequations}
	such that
	\begin{subequations}
		\begin{align}
			\lim _{x \rightarrow \pm \infty} m_{1}^{(\pm)}(x;z)&=(1,\pm z^{-1})^{\T},\\
			\lim _{x \rightarrow \pm \infty} m_{2}^{(\pm)}(x;z)&=(\pm z^{-1},1)^{\T}.
		\end{align}
	\end{subequations}
	It is easy to see from \eqref{eq:lax} that the normalized \textit{Jost functions} \eqref{jost; normal-1}-\eqref{jost; normal-2} satisfy the following \textit{Volterra} integral equations:
	\begin{equation}\label{Volterra; origin}
		\begin{aligned}
			m^{(\pm)}(x;z):&=\left[m_1^{(\pm)}(x;z), m_2^{(\pm)}(x;z)\right]\\
			&=B^{\pm}(z)+\int^x_{\pm\infty}X^\pm(x,z)X^\pm(y,z)^{-1}i\sigma_3(Q(y)\mp\sigma_1)m^{(\pm)}(y;z)e^{i(x-y)\zeta(z)\sigma_3}dy\\
   &=B^{\pm}(z)+\left[ K^\pm_Q m^{(\pm)}\right](x;z)
		\end{aligned}
	\end{equation}
	where
	\begin{equation}
		B^{\pm}(z)=\left(\begin{array}{cc}
			1 & \pm z^{-1}\\
			\pm z^{-1} & 1
		\end{array}\right),\quad X^\pm(x,z)=B^{\pm}(z)e^{-ix\zeta(z)\sigma_3},
	\end{equation}
 and the integral operator (column-wise):
 \begin{align}
 \label{eq:KQ-1}
     \left(K^+_Q \vec{f}_1\right)(x, z)&=\int_{+ \infty}^x\twomat{-i(q-1)\left[-\frac{z}{z^2-1}+\frac{z}{z^2-1}e^{2i\zeta(z)(x-y)}\right]}{i(\bar{q}-1)\left[\frac{z^2}{z^2-1}-\frac{1}{z^2-1}e^{2i\zeta(z)(x-y)}\right]}{-i({q}-1)\left[-\frac{1}{z^2-1}+\frac{z^2}{z^2-1}e^{2i\zeta(z)(x-y)}\right]}{i(\bar{q}-1)\left[\frac{z}{z^2-1}-\frac{z}{z^2-1}e^{2i\zeta(z)(x-y)}\right]}\\
     \nonumber
     &\qquad \times \colvec{f_{11}}{f_{12}} dy,
 \end{align}
 \begin{align}
  \label{eq:KQ-2}
     \left(K^+_Q \vec{f}_2\right)(x, z)&=\int_{+ \infty}^x\twomat{-i(q-1)\left[-\frac{z}{z^2-1}e^{2i\zeta(z)(y-x)}+\frac{z}{z^2-1}\right]}{i(\bar{q}-1)\left[\frac{z^2}{z^2-1}e^{2i\zeta(z)(y-x)}-\frac{1}{z^2-1}\right]}{-i({q}-1)\left[-\frac{1}{z^2-1}e^{2i\zeta(z)(y-x)}+\frac{z^2}{z^2-1}\right]}{i(\bar{q}-1)\left[\frac{z}{z^2-1}e^{2i\zeta(z)(y-x)}-\frac{z}{z^2-1}\right]}\\
     \nonumber
     &\qquad \times \colvec{f_{21}}{f_{22}} dy.
 \end{align}
It is easy to observe from \eqref{eq:KQ-1}-\eqref{eq:KQ-2} that $z=\pm 1$ are removable singularities. We have the following lemmas from \cite{CJ} which will be useful:
	\begin{lemma}
		\label{lemma: 1}
		For every $z \in \mathbb R\backslash \{0,1,-1\}$, if $ q(x)- tanh(x)\in L^1(\mathbb R)$, then \textit{Volterra} integral equations \eqref{Volterra; origin} admits unique solutions $m^{(\pm)}(x;z)$ and $\det m^{(\pm)}=1-z^{-2}$.
	\end{lemma}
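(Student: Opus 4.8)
The plan is to fix the spectral parameter $z\in\mathbb R\setminus\{0,1,-1\}$, treat the two columns of $m^{(\pm)}$ separately, run the classical Volterra/Neumann iteration for \eqref{Volterra; origin}, and then read off the determinant from the tracelessness of $\mathcal L$. First I would record the crude bounds on the kernels \eqref{eq:KQ-1}--\eqref{eq:KQ-2}: for real $z$ the phase $\zeta(z)$ is real, so every exponential $e^{\pm2i\zeta(z)(x-y)}$ appearing there has modulus one, while the rational prefactors $z/(z^2-1)$, $z^2/(z^2-1)$, $1/(z^2-1)$ are finite numbers as soon as $z\notin\{0,\pm1\}$ (which is consistent with the observation that $z=\pm1$ are removable). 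Hence each entry of the kernel of $K^{\pm}_Q$ is dominated at the point $(x,y)$ by $C(z)\,|q(y)\mp1|$, for a constant $C(z)$ depending only on $z$ that degenerates exactly as $z\to0,\pm1$ --- which is why those values are excluded.

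The second ingredient is an integrability statement for the perturbation appearing in \eqref{Volterra; origin}. Since $\tanh y\mp1$ decays exponentially as $y\to\pm\infty$, the hypothesis $q-\tanh\in L^1(\mathbb R)$ gives
\[
G^{\pm}(x):=\int_x^{\pm\infty}\bigl|q(y)\mp1\bigr|\,dy<\infty\qquad\text{for every fixed }x\in\mathbb R,
\]
even though $G^{\pm}$ is \emph{not} uniformly bounded in $x$ (for instance $G^{+}(x)$ grows linearly as $x\to-\infty$, because the $\tanh$ background tends there to the opposite unimodular constant $-1$). Combining the two ingredients, the Neumann series $m^{(\pm)}=\sum_{n\ge0}(K^{\pm}_Q)^nB^{\pm}(z)$ satisfies the triangular Volterra estimate
\[
\bigl|\,[(K^{\pm}_Q)^nB^{\pm}(z)](x)\,\bigr|\ \le\ \|B^{\pm}(z)\|\,\frac{\bigl(C(z)\,G^{\pm}(x)\bigr)^n}{n!},
\]
so the series converges for each $x$, uniformly on any half-line on which $G^{\pm}$ is bounded, and its sum is a continuous solution of \eqref{Volterra; origin} with $|m^{(\pm)}(x;z)|\le\|B^{\pm}(z)\|\,e^{C(z)G^{\pm}(x)}$. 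For uniqueness one observes that every continuous solution of \eqref{Volterra; origin} automatically tends to $B^{\pm}(z)$ as $x\to\pm\infty$ (a short Gronwall step using $G^{\pm}(x)\to0$ there), hence the difference of two solutions is bounded on each half-line and the iterated form of the estimate above forces it to vanish identically.

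For the determinant I would pass back from \eqref{Volterra; origin} to the differential equation \eqref{eq:lax}, as in the derivation of \eqref{Volterra; origin} itself: $\psi^{(\pm)}(x;z):=m^{(\pm)}(x;z)\,e^{-ix\zeta(z)\sigma_3}$ solves $\psi_x=\mathcal L\psi$ with $\mathcal L=i\sigma_3(Q-\lambda(z))$. Since $\sigma_3Q$ is off-diagonal and $\sigma_3$ is traceless, $\Tr\mathcal L=i\,\Tr(\sigma_3Q)-i\lambda(z)\,\Tr\sigma_3=0$, so Abel's identity shows that $\det\psi^{(\pm)}(x;z)$ is independent of $x$; since $\det e^{-ix\zeta(z)\sigma_3}=1$, the same holds for $\det m^{(\pm)}(x;z)$. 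Letting $x\to\pm\infty$ and using $m^{(\pm)}(x;z)\to B^{\pm}(z)$ gives $\det m^{(\pm)}(x;z)\equiv\det B^{\pm}(z)=1-z^{-2}$.

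I expect the only genuinely delicate point to be the construction step: because $G^{\pm}$ is not uniformly bounded in $x$, the usual contraction argument on a single fixed $L^\infty$ space does not apply verbatim, and one must instead run the pointwise/locally uniform Neumann argument above, accepting the a priori exponential-in-$x$ bound on $m^{(\pm)}$ (which is harmless here, since the lemma asserts nothing about the size of $m^{(\pm)}$). Everything else is routine Volterra bookkeeping; the statement itself is taken from \cite{CJ}.
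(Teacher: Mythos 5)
Your proposal is correct and follows the standard route: the paper itself gives no proof of this lemma (it is quoted from \cite{CJ}), and the argument there is exactly your Volterra/Neumann iteration with the $\frac{1}{n!}$ simplex gain (using that for real $z\notin\{0,\pm1\}$ the kernel is bounded by $C(z)\,|q(y)\mp1|$ with $|e^{2i\zeta(z)(x-y)}|=1$), followed by Abel's identity with $\operatorname{Tr}\mathcal L=0$ and the normalization $m^{(\pm)}\to B^{\pm}$ to get $\det m^{(\pm)}=\det B^{\pm}=1-z^{-2}$. Your remarks on the non-uniformity of $G^{\pm}$ in $x$ and on interpreting uniqueness within the class of solutions bounded on half-lines are the right caveats and do not affect the conclusion.
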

        \begin{lemma}
		\label{lem:continuous}
		If $q(x)-tanh(x)\in L^{1,n}(\mathbb{R})$, then $m^{(\pm)}(x;z)\in C^{n-1}\left(\mathbb{R},\mathbb{R}\backslash\{0\}\right) $ and $ 
  \partial_{z}^{n-1}m^{(+)}(x;z)\in L^{\infty}_x([x_0,+\infty);L^{\infty}_z(1-\varepsilon<|z|<1+\epsilon))$ for any preassigned $x_0\in \mathbb{R}$ and $\varepsilon\in(0,1).$
	\end{lemma}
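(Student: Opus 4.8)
The plan is to obtain the $z$-derivatives of $m^{(\pm)}$ by differentiating the \textit{Volterra} equation \eqref{Volterra; origin} and solving the resulting linear equations with the very same iteration that produces $m^{(\pm)}$ in Lemma \ref{lemma: 1}. The only quantitative input, besides the explicit form of the kernels \eqref{eq:KQ-1}--\eqref{eq:KQ-2}, is the decomposition $q-1=(q-\tanh)+(\tanh-1)$: since $q-\tanh\in L^{1,n}$ and $\tanh y-1=\mathcal{O}(e^{-2y})$ as $y\to+\infty$, the weight $y\mapsto \langle y-x_0\rangle^{n}(q(y)-1)$ lies in $L^1([x_0,\infty))$ for every $x_0$, and the analogous statement for $q+1$ holds on $(-\infty,x_0]$. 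This is what allows the growth in $|x-y|$ created by differentiation to be absorbed.

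First I would record the algebraic structure of $K^\pm_Q$. Using $\zeta(z)=\tfrac{z^2-1}{2z}$, every entry of the kernel in \eqref{eq:KQ-1}--\eqref{eq:KQ-2} is a finite sum of terms $c\,(q(y)-1)\,h(z;x-y)$ and $c\,(\overline{q(y)}-1)\,h(z;x-y)$ with
\[
 h(z;\tau)=\frac{z^{a}e^{2i\zeta(z)\tau}-z^{a+b}}{z^2-1},\qquad a\in\mathbb{Z},\ b\in\{0,1,2\},
\]
the numerator vanishing at $z=\pm1$ in every occurring case, so that $h(\cdot\,;\tau)$ is analytic on $\mathbb{C}\setminus\{0\}$ (the points $\pm1$ are removable, as observed after \eqref{eq:KQ-2}). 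Hence, for fixed $x$, the $k$-th $z$-derivative of a kernel entry is $c\,(q(y)-1)\,\partial_z^{k}h(z;x-y)$ with $\partial_z^{k}h(\cdot\,;\tau)$ again analytic on $\mathbb{C}\setminus\{0\}$, and a direct computation using $|e^{2i\zeta(z)\tau}|=1$ on the real axis, together with the fact that the Taylor coefficients of $h$ at $z=\pm1$ are explicit combinations of those of its numerator, yields
\[
 \sup_{z\in\mathbb{R}:\,1-\varepsilon<|z|<1+\varepsilon}\bigl|\partial_z^{k}h(z;x-y)\bigr|\le C_{k,\varepsilon}\,\langle x-y\rangle^{k+1},
\]
the exponent $k+1$ (rather than $k$) reflecting the simple zero that the factor $1/(z^2-1)$ must absorb at $z=\pm1$; away from $\pm1$ one has instead the bound $\langle x-y\rangle^{k}$ uniformly on compact $z$-subsets of $\mathbb{R}\setminus\{0\}$.

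With this in hand I would argue by induction on $k=0,1,\dots,n-1$, writing $A_\varepsilon=\{z:1-\varepsilon<|z|<1+\varepsilon\}$. The case $k=0$ is Lemma \ref{lemma: 1} plus the elementary bound $m^{(+)}\in L^\infty_x([x_0,\infty);L^\infty_z(A_\varepsilon\cap\mathbb{R}))$ from the \textit{Volterra} iteration. Assuming $\partial_z^{j}m^{(\pm)}$ exists, is continuous in $z$ on $\mathbb{R}\setminus\{0\}$, and (for the $+$ sign) lies in that mixed-norm space for all $j<k$, differentiating \eqref{Volterra; origin} $k$ times and using \textit{Leibniz} gives
\[
 \bigl(\mathbf{1}-K^\pm_Q\bigr)\,\partial_z^{k}m^{(\pm)}=\partial_z^{k}B^\pm+\sum_{j=1}^{k}\binom{k}{j}\bigl(\partial_z^{j}K^\pm_Q\bigr)\,\partial_z^{k-j}m^{(\pm)}=:F_k .
\]
By the estimate above the kernel of $\partial_z^{j}K^\pm_Q$ is dominated by $\langle x-y\rangle^{j+1}|q(y)-1|\le \langle y-x_0\rangle^{n}|q(y)-1|$ on the integration region $y\ge x\ge x_0$ (this is exactly why the integral runs towards $+\infty$, and why the full moment $n$ is needed), so $\partial_z^{j}K^\pm_Q$ maps the mixed-norm space into itself; with the induction hypothesis this puts $F_k$ in it. Since $\mathbf{1}-K^\pm_Q$ is the operator of Lemma \ref{lemma: 1}, boundedly invertible by the \textit{Volterra} iteration, we recover $\partial_z^{k}m^{(\pm)}$ with the asserted bound; differentiation under the integral sign and continuity in $z$ then follow from dominated convergence with the integrable majorant $\langle y-x_0\rangle^{n}|q(y)-1|$. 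Running the same step with $x$ fixed (dropping uniformity in $x$, and for $m^{(-)}$ using the decay of $q+1$ at $-\infty$) on arbitrary compact subsets of $\mathbb{R}\setminus\{0\}$ gives the general $C^{n-1}$-in-$z$ statement, the branch points $\pm1$ being covered by the displayed estimate. (Over the complex annulus the claim should be read for the one-sided continuations — $m_1^{(+)}$ into $\Im z<0$ and $m_2^{(+)}$ into $\Im z>0$, where the Volterra kernel stays bounded — the estimate near $z=\pm1$ being unaffected.)

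I expect the main obstacle to be the interplay between the apparent poles of the kernel at $z=\pm1$ and its growth in $|x-y|$: the factored kernel cannot be differentiated term by term, because the product rule manufactures spurious poles at $\pm1$ that cancel only after resummation, so the singularity at $z=\pm1$ must be handled through the analyticity of $h(\cdot\,;\tau)$, and tracking this carefully shows that each $z$-derivative costs one power of $\langle x-y\rangle$ on top of the one already forced by the cancellation of $1/(z^2-1)$ — which is precisely why $q-\tanh\in L^{1,n}$ is the natural hypothesis for $n-1$ derivatives. The rest — \textit{Volterra} invertibility and the dominated-convergence bookkeeping — is routine.
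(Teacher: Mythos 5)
The paper does not actually prove this lemma --- it is imported verbatim from \cite{CJ} --- so there is no in-paper argument to compare against. Your proof is the standard one (and essentially the one in \cite{CJ}): differentiate the \textit{Volterra} equation in $z$, absorb the powers of $\langle x-y\rangle$ produced both by $\partial_z^k$ and by the removable singularity of $(e^{2i\zeta(z)(x-y)}-1)/(z^2-1)$ at $z=\pm1$ into the $L^{1,n}$ weight on $q-\tanh$ together with the exponential decay of $\tanh-1$ at $+\infty$, then invert $\mathbf{1}-K_Q^{\pm}$ by the Volterra iteration; the bookkeeping is correct, including the key count that $n$ moments buy only $n-1$ derivatives near $|z|=1$ because the cancellation of $1/(z^2-1)$ already costs one power of $\langle x-y\rangle$.
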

	If $q(x)-\tanh(x)\in L^1(\mathbb{R})$, for every $z \in \mathbb R\backslash \{0,1,-1\}$, there is a matrix $S(z)$, the scattering matrix, with
	\begin{equation}
		\label{eq:S(z)}
		\left[\psi^{(-)}_1,\psi^{(-)}_2\right]=\left[\psi_1^{(+)},\psi_2^{(+)}\right]S(z).
	\end{equation}
	The matrix $S(z)$ takes the form
	\begin{equation}
		S(z)=\left(\begin{array}{cc}
			a(z) & \overline{b(z)} \\
			b(z) & \overline{a(z)}
		\end{array}\right)
	\end{equation}
 where
 \begin{equation}
		\left\{\begin{array}{l}
			a(z)=\frac{\text{det}\left[\psi^{(-)}_1(x;z),\psi^{(+)}_2(x;z)\right]}{1-z^{-2}},\\
			b(z)=\frac{\text{det}\left[\psi^{(+)}_1(x;z),\psi^{(-)}_1(x;z)\right]}{1-z^{-2}}
		\end{array}\right.
	\end{equation}
	and the determinant relation gives
	\begin{equation}
		a(z) \overline{a(z)}-b(z) \overline{b(z)}=1.
	\end{equation}
 We define the reflection coefficient:
	\begin{equation}
		\label{def:r(z)}
		r(z)=\frac{b(z)}{a(z)}.
		\end{equation}
	For $z\in \mathbb{R}\backslash\{0,1,-1\} $, by uniqueness we have
 \begin{equation}
\left\{
\begin{aligned}
\psi_1^{(-)}(x;z) &= \sigma_1\overline{\psi_2^{(-)}(x;z)} ,\\
\psi_2^{(+)}(x;z) &= \sigma_1\overline{\psi_1^{(+)}(x;z)},
\end{aligned}
\right.
\end{equation}

\begin{equation}
\left\{
\begin{aligned}
\psi_1^{(-)}(x;z) &= -z^{-1}\psi_2^{(-)}(x;z^{-1}), \\
\psi_2^{(+)}(x;z) &= z^{-1}\psi_1^{(+)}(x;z^{-1}).
\end{aligned}
\right.
\end{equation}
	
	This leads to the symmetry relation of the entries of $S(z)$:
	\begin{equation}
		-\overline{a(z^{-1})}=a(z), \quad -\overline{b(z^{-1})}=b(z),\quad \overline{r(z^{-1})}=r(z).
	\end{equation}
	
 As stated in Lemma $\ref{lem:continuous}$, provided that $q-\tanh \in L^{1,n}(\mathbb{R})(n\geq1) $, the function $\psi^{(\pm)}_1(x;z), \psi^{(\pm)}_2(x;z)$ is continuous at $z=\pm1 $. In the generic case, we have 
 \begin{equation}
 \label{cond: gen}
	\text{det}[\psi^{(+)}_1(x;\pm1),\psi^{(-)}_1(x;\pm1)]=\mp\text{det}[\psi^{(-)}_1(x;\pm1),\psi^{(+)}_2(x;\pm1)]\neq 0.
    \end{equation} 
    such that  $\lim\limits_{z\rightarrow\pm1}r(z)=\mp1$. In the rest of the paper we will assume \eqref{cond: gen}. 
\subsubsection{The discrete data}
The pair $\psi_1^{-}(x;z_k)$ and $\psi_2^{+}(x;z_k)$ at any zero $z=z_k\in\mathbb{C}^{+}$ of $a(z)$ are linearly dependent. So there exists a constant $\gamma_k\in\mathbb{C}$ such that
\begin{equation}
    \psi_1^{-}(x;z_k)=\gamma_k\psi_2^{+}(x;z_k).
\end{equation}
We have the following lemma from \cite{CJ} characterizing the discrete data:
\begin{lemma}
    If $q(x)-\tanh(x)\in L^{2,3}(\mathbb{R})$, then the zeros of $a(z)$ in $\mathbb{C}^+$ are simple and the number is finite.
\end{lemma}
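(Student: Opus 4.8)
The plan is to exploit the determinant representation $a(z)=\det\left[\psi_1^{(-)}(x;z),\psi_2^{(+)}(x;z)\right]/(1-z^{-2})$ together with the decay hypothesis $q-\tanh\in L^{2,3}(\mathbb{R})$, which (by the standard Neumann-series analysis of the Volterra equations \eqref{Volterra; origin}, as in Lemmas \ref{lemma: 1}--\ref{lem:continuous}) gives analytic continuation of $\psi_1^{(-)}(x;z)e^{ix\zeta(z)}$ and $\psi_2^{(+)}(x;z)e^{-ix\zeta(z)}$ into $\mathbb{C}^+$ with enough control in $z$ to differentiate. First I would establish that $a(z)$ is analytic in $\mathbb{C}^+\setminus\{0\}$ and extends continuously to $\mathbb{R}\setminus\{0,\pm1\}$, with the weighted decay upgrading this to continuity (indeed $C^2$-type regularity) up to $z=\pm1$; the generic assumption \eqref{cond: gen} guarantees $a(\pm1)\neq0$ so the zeros stay away from the troublesome points $\{0,\pm1\}$. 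Next, to prove finiteness, note $|a(z)|\to1$ as $|z|\to\infty$ in $\overline{\mathbb{C}^+}$ (from $m^{(\pm)}\to$ constant vectors) and $a$ is continuous and nonvanishing on the boundary $\mathbb{R}$ except possibly at $\{0,\pm1\}$, where the limits are $\pm1\neq0$; hence the zero set is contained in a compact subset of $\mathbb{C}^+$, and since $a$ is analytic and not identically zero there, the zeros are isolated, therefore finite.

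For simplicity of the zeros, the key is a trace-formula / Wronskian identity relating $a'(z_k)$ to a nonvanishing integral. At a zero $z_k\in\mathbb{C}^+$ we have $\psi_1^{(-)}(x;z_k)=\gamma_k\psi_2^{(+)}(x;z_k)$; I would differentiate the Lax equation $\psi_x=\mathcal{L}\psi$ in $z$, form the Wronskian $W(x)=\det[\psi_1^{(-)},\partial_z\psi_2^{(+)}]+\det[\partial_z\psi_1^{(-)},\psi_2^{(+)}]$-type combination, and show that its $x$-derivative integrates to express $a'(z_k)$ (up to the nonvanishing prefactor $(1-z_k^{-2})$ and the constant $\gamma_k$) as
\begin{equation}
\nonumber
a'(z_k)=c_k\int_{\mathbb{R}}\psi_2^{(+)}(x;z_k)^{\T}\sigma_3\,\partial_z\mathcal{L}(x;z_k)\,\psi_2^{(+)}(x;z_k)\,dx,
\end{equation}
with $c_k\neq0$. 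Since $\partial_z\mathcal{L}=-i\sigma_3\,\lambda'(z)=-\tfrac{i}{2}(1-z^{-2})\sigma_3$ is a nonzero multiple of $\sigma_3$, the integrand reduces to a definite expression in $|\psi_{2,1}^{(+)}|^2$ and $|\psi_{2,2}^{(+)}|^2$ that cannot vanish identically (the eigenfunction is not the zero solution), forcing $a'(z_k)\neq0$. An alternative I would keep in reserve is to note that $z_k$ corresponds to an $L^2$-eigenvalue of the (formally self-adjoint after conjugation by $\sigma_3$) Zakharov--Shabat operator, whose real eigenvalues — here, values of $\zeta(z_k)$ on the relevant spectral curve — are automatically simple by a standard Sturm-type uniqueness argument for the ODE, and simplicity of $\zeta(z_k)$ transfers to simplicity of $z_k$ provided $z_k\neq\pm1$, which we already have.

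The main obstacle I anticipate is the bookkeeping at the branch points and the singularity at $z=0$: the map $z\mapsto\zeta(z)=\tfrac12(z-z^{-1})$ and $\lambda(z)=\tfrac12(z+z^{-1})$ both blow up at $z=0$, the Jost functions carry $z^{-1}$ in their normalizations, and $\det m^{(\pm)}=1-z^{-2}$ degenerates at $z=\pm1$; so one must check carefully that after clearing these explicit factors the quantity $a(z)$ and its derivative are genuinely analytic/continuous where claimed, and that the Wronskian identity producing $a'(z_k)$ has no hidden singular contribution from the lower limit of integration. The weighted space $L^{2,3}$ is presumably exactly what is needed to make $\partial_z m^{(\pm)}$ bounded and integrable enough near $x=\pm\infty$ so that the boundary terms in the Wronskian computation decay and can be discarded — verifying that decay is the technical heart of the argument, and I would lean on Lemma \ref{lem:continuous} (with $n=3$) to supply it.
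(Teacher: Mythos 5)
The paper does not actually prove this lemma: it is imported verbatim from \cite{CJ}, with only a following remark sketching why, under the generic condition \eqref{cond: gen}, the zeros cannot accumulate at $z=\pm1$ (because there $|a(z)|\to\infty$) so that finiteness already holds for $q-\tanh\in L^{2,2}$. Your proposal is therefore being measured against the standard argument rather than against anything in this paper, and in outline it follows that standard argument: analyticity and boundary control of $a$, the bound $|a|\geq1$ on $\mathbb{R}\setminus\{0,\pm1\}$, $|a|\to1$ at $\infty$, and a Wronskian identity for $a'(z_k)$. Two remarks on the finiteness part: you should also rule out accumulation of zeros at $z=0$, which follows from the symmetry $a(z)=-\overline{a(\overline{z}^{-1})}$ reducing it to the behavior at $\infty$; and note that by invoking \eqref{cond: gen} to get $a(\pm1)\neq0$ you are proving a weaker statement than the lemma, whose point (and the reason for the stronger weight $L^{2,3}$) is to cover the non-generic case where $\det[\psi_1^{(-)},\psi_2^{(+)}]$ vanishes at $\pm1$ and one needs two derivatives of $a$ there to exclude accumulation.

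The genuine gap is in the simplicity argument. With $\partial_z\mathcal{L}=-\tfrac{i}{2}(1-z^{-2})\sigma_3$, the Wronskian computation gives an integrand proportional to $\det\left[\sigma_3\psi_2^{(+)},\psi_2^{(+)}\right]=2\,\psi_{2,1}^{(+)}\psi_{2,2}^{(+)}$, i.e.\ a \emph{product} of the two components, not "a definite expression in $|\psi_{2,1}^{(+)}|^2$ and $|\psi_{2,2}^{(+)}|^2$" as you assert. A product has no sign, so nonvanishing of the integral does not follow from the eigenfunction being nonzero. To close this you must first establish that the zeros lie on the unit circle $|z_k|=1$ (this is where self-adjointness of the underlying Dirac operator enters, forcing $\lambda(z_k)=\tfrac12(z_k+z_k^{-1})\in(-1,1)$ to be real), and then combine the symmetries $\psi_2^{(+)}(x;z)=\sigma_1\overline{\psi_1^{(+)}(x;\overline{z})}$ and $\psi_2^{(+)}(x;z)=z^{-1}\psi_1^{(+)}(x;z^{-1})$ at $|z_k|=1$ to conclude $\psi_{2,2}^{(+)}(x;z_k)$ is a fixed unimodular multiple of $\overline{\psi_{2,1}^{(+)}(x;z_k)}$, so that the integrand becomes a nonzero constant times $|\psi_{2,1}^{(+)}|^2$. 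Your "alternative kept in reserve" (Sturm-type simplicity for the self-adjoint problem) is in fact the essential ingredient, not a backup; as written, the primary argument would fail at the sign-definiteness step.
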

\begin{remark}
    If we assume the generic condition \eqref{cond: gen}, then $q(x)-\tanh(x)\in L^{2,2}(\mathbb{R})$ will imply the continuity of $\text{det}[\psi^{(-)}_1(x;\pm1),\psi^{(+)}_2(x;\pm1)]$. And by continuity the zeros of $a(z)$ will not accumulate at $z=\pm$ thus  the number of zeros of $a(z)$ in $\mathbb{C}^+$ is also finite.
\end{remark}
We then define the norming constant:
\begin{equation}
    c_k:=\frac{\gamma_k}{a'(z_k)}.
 \end{equation}
	\subsection{Estimations on the reflection coefficient}In this subsection we will prove the following important proposition: 
	\begin{proposition}
		\label{prop:r}
		If $q\in\mathcal{I}^{2,1}_1$ as is given by \eqref{space:initial}, then $r(z)\in H^{1,1}(\mathbb{R})$. Moreover, if $q\in\mathcal{I}^{3,1}_1$, then $r''(z)\in L^2({(1/2, 3/2)\cup (-3/2, -1/2)})$.
		\end{proposition}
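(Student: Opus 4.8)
\medskip
\noindent\textbf{Strategy.} The plan is to follow the $L^2$-based \textit{Fourier} argument of \cite{Zhou98}, adapted to the cluster of singular points $z\in\{0,\pm1\}$ and to $z=\infty$. Writing $g:=q-\tanh$ --- so that $q\in\mathcal{I}^{2,1}_1$ amounts to $g\in L^{2,2}$ and $g'\in L^{2,1}$, the latter because $\tanh'=\sech^2$ is rapidly decaying --- I would first record closed integral representations for the scattering coefficients: feeding the \textit{Volterra} formula \eqref{Volterra; origin} into the determinant expressions for $a(z)$ and $b(z)$ and evaluating at a fixed point (say $x=0$), one splits each coefficient into a \emph{free part}, linear in $g$ and of the form $\kappa(z)\int_{\mathbb{R}}g(y)\,e^{\mp 2iy\zeta(z)}\,dy$ with $\kappa$ a fixed rational function assembled from the kernel entries \eqref{eq:KQ-1}--\eqref{eq:KQ-2}, plus a \emph{nonlinear remainder} that carries a factor $m^{(\pm)}-B^{\pm}=K^{\pm}_Q m^{(\pm)}$ and is hence at least quadratic in $g$. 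Since $|a(z)|^2=1+|b(z)|^2\ge1$ on $\mathbb{R}\setminus\{0,\pm1\}$ and, under \eqref{cond: gen}, $a$ extends continuously through $z=\pm1$ with $|a|\ge1$, one has $\|r\|_{L^\infty}\le1$ at no cost, and $r=b/a$ inherits its regularity and decay from the two determinants defining $b$ and $a$, whose common factor $(1-z^{-2})^{-1}$ cancels in the quotient (so the simple poles of $b$ and $a$ at $z=\pm1$ cancel, leaving $r(\pm1)=\mp1$); away from $z=\pm1$ one may equivalently retain the bounded factor $a^{-1}$.

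Next I would control the \textit{Jost} functions together with their first $z$-derivative in weighted mixed norms. Running the \textit{Neumann} series for $\mathbf{1}-K^{\pm}_Q$ on the appropriate half-line --- invertible by Lemma~\ref{lemma: 1} --- the norm of $K^{\pm}_Q$ is dominated by $\|g\|_{L^1}$ \emph{uniformly} in $z\in\mathbb{R}\setminus\{0\}$, and even uniformly up to $z=\pm1$ because the apparent poles of the kernels are removable, as noted after \eqref{eq:KQ-2}. Differentiating the series in $z$ produces $\partial_z$ of the rational coefficients together with factors $2i(x-y)\zeta'(z)e^{2i\zeta(z)(x-y)}$; the powers of $(x-y)$ are absorbed by the weights of $L^{2,m}$, while $\zeta'(z)=\tfrac12(1+z^{-2})$ is bounded for $|z|\ge1$ and its growth as $z\to0$ is matched by the Jacobian of the substitution $\zeta=\zeta(z)$. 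This delivers bounds for $m^{(\pm)}-B^{\pm}$ and $\partial_z\bigl(m^{(\pm)}-B^{\pm}\bigr)$ in $L^\infty_z(L^2_x)$ and $L^2_z(L^\infty_x)$ on compact $z$-sets avoiding $\{0,\pm1\}$, near $z=\pm1$, and near $z=\infty$.

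Third comes the \textit{Fourier} estimate proper. After the substitution $\zeta=\zeta(z)$ the free part is a finite sum of transforms $\widehat G(\pm2\zeta)$ with $G$ the product of $g$ by a bounded smooth function, and the isometry $\mathcal{F}\colon L^{2,m}(\mathbb{R})\to H^m(\mathbb{R})$ turns decay of $g$ into $L^2$-regularity of $r$. In the regime $|z|\ge1$, where $\zeta\sim z/2$, the memberships $\langle z\rangle r\in L^2$ and $r'\in L^2$ follow from $g'\in L^2$ and $g\in L^{2,1}$; the behaviour as $z\to0$ is then read off from the symmetry $\overline{r(z^{-1})}=r(z)$, which reduces the regime $z\to0$ to $z\to\infty$ and shows that the singular weight $\zeta'(z)\sim(2z^2)^{-1}$ near $0$ contributes, after $w=z^{-1}$, a term controlled by $\|\partial_y(yG)\|_{L^2}$, finite exactly because $g'\in L^{2,1}$ (supplied by $q^{(1)}\in L^{2,1}$). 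The nonlinear remainder, quadratic in $g$ and controlled by the \textit{Jost} bounds above, lands in a strictly better space; combining the three regimes yields $r\in H^{1,1}(\mathbb{R})$.

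The main obstacle, and the origin of the weight count, is the regime $z\to\pm1$. There a kernel entry such as $\tfrac{z}{z^2-1}\bigl(e^{2i\zeta(z)(x-y)}-1\bigr)$ is removable only through the cancellation $e^{2i\zeta(x-y)}-1\sim 2i(z\mp1)(x-y)$, and a short \textit{Taylor} expansion at $z=\pm1$ shows that its $j$-th $z$-derivative behaves like $(x-y)^{j+1}$; hence estimating $r'$ near $\pm1$ by the \textit{Fourier} argument costs \emph{two} moments of $g$, which is exactly why $q\in\mathcal{I}^{2,1}_1$ (i.e.\ $g\in L^{2,2}$) is imposed rather than merely $g\in L^{2,1}$. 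For the last assertion one differentiates once more: the second $z$-derivative of such entries behaves like $(x-y)^3$, so on the fixed neighbourhoods $(1/2,3/2)\cup(-3/2,-1/2)$ of $\pm1$ --- where $\zeta$ is a diffeomorphism with $\zeta(\pm1)=0$ and $\zeta'(\pm1)=1$, hence $H^2_z=H^2_\zeta$ locally --- the hypothesis $g\in L^{2,3}$ together with $q^{(1)}\in L^{2,1}$, that is $q\in\mathcal{I}^{3,1}_1$, places the determinant numerator of $r$ in $H^2$ there while its denominator is $C^2$ and nonvanishing (by the \textit{Jost} bounds and \eqref{cond: gen}), whence $r''\in L^2\bigl((1/2,3/2)\cup(-3/2,-1/2)\bigr)$. (A global statement $r''\in L^2(\mathbb{R})$ genuinely fails: through $\overline{r(z^{-1})}=r(z)$, $L^2$-control of $r''$ near $z=0$ would demand $L^{2,3}$-decay of $r''$ near $z=\infty$, hence regularity of $g$ that is not assumed.) I expect the uniform bookkeeping of weights in the second step --- carried simultaneously across $z\to0$, $z\to\pm1$ and $z\to\infty$ --- to be the principal technical burden.
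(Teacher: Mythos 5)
Your proposal is correct and follows essentially the same route as the paper: reduction to the \textit{Volterra} equation and its $z$-derivatives, $L^2$ control of the linear (in $q-\tanh$) part via \textit{Plancherel} duality after the substitution $\gamma=z-z^{-1}$, the symmetry $\overline{r(z^{-1})}=r(z)$ combined with an integration by parts in $y$ (trading the extra factor of $z$ for $q'$ with a weight, hence the hypothesis $q^{(1)}\in L^{2,1}$) to handle $z\to 0$, and the \textit{Taylor} cancellation $|e^{ix}-1-ix|\le x^2/2$ at $z=\pm1$, which is exactly what forces the $L^{2,2}$ (resp.\ $L^{2,3}$) moment count for $r'$ (resp.\ $r''$). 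The only difference is organizational --- you package the quadratic remainder abstractly where the paper estimates the terms $H_1^{\pm}$, $H_2^{\pm}$ explicitly using \textit{Minkowski} and \textit{Hardy} inequalities to get $m^{(\pm)}-B^{\pm}\in L^2_xL^2_z$ --- but the substance is the same.
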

	\begin{remark}
 \label{rmk: r}
 Because of singular behavior of $r(z)$ near $0,1,-1$, we will divide the proof into three cases:
		\begin{itemize}
			\item[1.]For $|z|>5/4$, we will directly study problem \eqref{Volterra; origin} in the spirit of  \cite{Zhou98}.
			\item[2.]For $|z|<3/4$, to avoid singularity at the origin, we need the following symmetry: $r(z^{-1})=\overline{r(z)}$. Here we point out that differentiating $\zeta=z/2-1/2z$ with respect to $z$ will result in a second order singularity at $0$, so we consider some weighted $L^2$-norm using change of variable: $z\mapsto 1/z$.
			\item[3.]For $z\in (1/2, 3/2)\cup (-3/2, -1/2)$, we will study the singularities of $r(z)$, $r'(z)$ and $r''(z)$ at $z=\pm1$.
		\end{itemize}
  Our proofs exploits the oscillatory phase function $e^{\pm 2i(x-y)\zeta(z)}$ and the\textit{ Plancherel's} identity. This illustrates the Fourier properties of the direct scattering transform. This allows us to considerabley reduce the regularity and decay assumptions of the initial data when compared with similar results in \cite{CJ}.
 \end{remark}
 The proof of Proposition \ref{prop:r} consists of the following lemmas.
\subsubsection{Away from the origin} 
 \begin{lemma}
    \label{lm:r}
		If $q\in\mathcal{I}^{2,1}_1$, then $r(z)\in L^2$ for $|z|>5/4$.
 \end{lemma}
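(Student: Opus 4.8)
The plan is to work directly from the Volterra integral equation \eqref{Volterra; origin}, or more precisely from the component form \eqref{eq:KQ-1}--\eqref{eq:KQ-2}, restricted to the region $|z|>5/4$. In this region $\zeta(z) = \tfrac12(z-z^{-1})$ is a smooth, strictly monotone function of $z$ with $\zeta'(z) = \tfrac12(1+z^{-2})$ bounded above and below, and the prefactors $z/(z^2-1)$, $z^2/(z^2-1)$, $1/(z^2-1)$ appearing in the kernels are bounded and smooth; so all the singular behavior at $\{0,\pm1\}$ is invisible here and the analysis is of standard ``away from the bad points'' type. First I would solve the Volterra equation by the usual Neumann series in a space like $L^\infty_z\!\left(\{|z|>5/4\};L^\infty_x([x_0,\infty))\right)$, using $q-\tanh\in L^1$ (which follows from $q\in\mathcal I^{2,1}_1\subset\mathcal I^{1,0}_0$) to get convergence, exactly as in Lemma \ref{lemma: 1}; this gives a bounded normalized Jost function $m^{(+)}$ and, subtracting the background limit, an $L^1_y$-integrable remainder term.

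Next I would extract $b(z)$ (hence $r(z) = b(z)/a(z)$, with $a(z)$ bounded below away from its zeros for $|z|$ large) as an explicit integral against the oscillatory kernel. The key point is that from the determinant formula $b(z) = \det[\psi^{(+)}_1,\psi^{(-)}_1]/(1-z^{-2})$ together with the Volterra representation of $\psi^{(-)}_1$ and the boundary normalization of $\psi^{(+)}_1$, one writes
\begin{equation}
\label{eq:b-integral-rep}
b(z) = \int_{\mathbb R} e^{2ix\zeta(z)}\, G(x,z)\, dx
\end{equation}
where $G(x,z)$ is built from $(q(x)-\tanh x)$, the smooth bounded prefactors, and the bounded Jost function $m^{(+)}(x;z)$, and is $L^1$ in $x$ uniformly for $|z|>5/4$. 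Changing variables to the Fourier conjugate variable $k = 2\zeta(z)$ — a smooth change of variables with bounded, bounded-away-from-zero Jacobian on $|z|>5/4$ — turns \eqref{eq:b-integral-rep} into an honest Fourier transform $\widehat{G}(k,\,\cdot\,)$ evaluated on the curve. Then the hypothesis $q\in\mathcal I^{2,1}_1$ — which gives $q-\tanh\in L^{2,2}$ and $(q-\tanh)'\in L^{2,1}$, and through the Volterra equation transfers one $x$-weight and the needed regularity onto $G$ — lets me put $G(\,\cdot\,,z)\in L^2_x$ (in fact $L^{2,1}_x$, but here I only need $L^2$), and Plancherel gives $\|b\|_{L^2(|z|>5/4)} = \|\widehat{G}\|_{L^2} \lesssim \|G\|_{L^2_x} < \infty$. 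Dividing by $a(z)$, which is continuous and has no zeros on $|z|>5/4$ on the real axis (the zeros $z_k$ live in $\mathbb C^+$) and tends to a finite nonzero limit as $|z|\to\infty$, preserves the $L^2$ bound.

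The main obstacle I anticipate is not the oscillatory/Plancherel step itself but the bookkeeping needed to show that the amplitude $G(x,z)$ genuinely inherits enough $x$-decay and $x$-smoothness from $q_0\in\mathcal I^{2,1}_1$ through the Volterra equation: one must check that differentiating under the integral sign in \eqref{eq:KQ-1}--\eqref{eq:KQ-2} and integrating by parts to move a $\partial_x$ off the oscillatory exponential produces only terms controlled by $q-\tanh\in L^{2,2}$, $(q-\tanh)'\in L^{2,1}$ and the already-established boundedness of $m^{(+)}$ and $\partial_x m^{(+)}$, with the weights distributed so that every resulting term is in $L^2_x$. The splitting $q - \tanh = (q-\tanh)$ versus the explicit $\tanh x - 1$ (which is itself exponentially decaying, hence harmless) should be used to avoid any spurious loss. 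Once that accounting is in place, the conclusion $r\in L^2(|z|>5/4)$ is immediate from Plancherel and the lower bound on $|a|$.
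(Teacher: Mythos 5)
Your overall strategy coincides with the paper's: reduce to an oscillatory integral, change variables to the Fourier-conjugate variable $\gamma=z-z^{-1}$ (whose Jacobian is bounded above and below on $|z|>5/4$), apply a Plancherel-type bound, and divide by $a(z)$ using $|a(z)|^2=1+|b(z)|^2\ge 1$. However, there is a genuine gap at the Plancherel step as you have set it up. In your representation $b(z)=\int_{\mathbb R}e^{2ix\zeta(z)}G(x,z)\,dx$ the amplitude $G$ contains the Jost function $m^{(+)}(x;z)$ and therefore depends on $z$. After the change of variables $k=2\zeta(z)$ you are evaluating a two-variable object on the diagonal, $k\mapsto \widehat G(k,z(k))$, and Plancherel does not give $\|b\|_{L^2}\lesssim\|G\|_{L^2_x}$ for such an expression; likewise the dual formulation fails, because Fubini no longer produces $\mathcal F^{-1}[\phi](x-y)$ once the amplitude carries its own $\gamma$-dependence. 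A uniform-in-$z$ pointwise bound $|G(y,z)|\lesssim|q(y)-1|$ is not enough to rescue the argument.

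The paper avoids this by a different bookkeeping. Expanding $\det[m^{(+)}_1,m^{(-)}_1]=(m^{(+)}_{11}-1)m^{(-)}_{21}-m^{(+)}_{21}(m^{(-)}_{11}-1)+m^{(-)}_{21}-m^{(+)}_{21}$, the quadratic terms are products of an $L^\infty_z$ factor and an $L^2_z$ factor, so everything reduces to showing $m^{(\pm)}-B^{\pm}\in L^\infty_x(\mathbb R^\pm;L^2_z(|z|>5/4))$. By \eqref{cal:m} and the boundedness of the Volterra resolvent $(1-K^\pm_Q)^{-1}$ on that space, this in turn reduces to the single inhomogeneous term $K^\pm_QB^\pm$ in \eqref{eq:K^+_QB^+}, whose only non-decaying entries are $\int_x^{\pm\infty}e^{2i(x-y)\zeta(z)}(q-1)\,dy$ multiplied by rational functions of $z$ that are bounded on $|z|>5/4$. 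Here the amplitude $q-1$ is independent of $z$, so the duality/Plancherel argument in the $\gamma$ variable applies legitimately and yields the bound by $\|q-1\|_{L^2(\mathbb R^+)}$. If you insert this resolvent reduction your proof closes. Finally, the ``main obstacle'' you flag (integrating by parts to move $\partial_x$ off the exponential) is not needed for this lemma at all: no $x$-regularity of $q$ enters the $L^2$ bound on $r$; the derivative hypothesis is only used later, for $r'$ and for the behavior near $z=\pm1$.
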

 \begin{proof}
     Recall that 
     \begin{align} 
b(z) = \frac{\det\left[\psi^{(+)}_1(x;z), \psi^{(-)}_1(x;z)\right]}{1 - z^{-2}} = \frac{\det\left[m^{(+)}_1(x;z), m^{(-)}_1(x;z)\right]}{1 - z^{-2}} e^{-2i\zeta(z)x}.
\end{align}
We observe that 
\begin{align}
\det\left[m^{(+)}_1, m^{(-)}_1\right] &= m^{(+)}_{11}m^{(-)}_{21} - m^{(+)}_{21}m^{(-)}_{11} \\
\nonumber
&= (m^{(+)}_{11} - 1)m^{(-)}_{21} - m^{(+)}_{21}(m^{(-)}_{11} - 1) + m^{(-)}_{21} - m^{(+)}_{21}. 
\end{align}
 By \eqref{def:r(z)} and the fact that $|a(z)|\geq1$,  we only need  to prove that $m^{(\pm)}(x;z)-B^{\pm}(z)\in L^{\infty}_{x}(\mathbb{R}^{\pm};L^2_z(|z|>5/4))$.
	We know that
	\begin{equation}
        \label{cal:m}
		m^{(\pm)}(x;z)-B^{\pm}(z)=(1-K_Q^{\pm})^{-1}K_Q^{\pm}B^{\pm}.
	\end{equation}
	So by standard \textit{Volterra} property (cf.\cite{DZ03} ), we only need to prove $K_Q^\pm B^\pm(x;z)\in L^{\infty}_{x}(\mathbb{R}^{\pm};L^2_z(|z|>5/4))$. Setting $\widetilde{q}=q+\bar{q}-2$
	\begin{equation}
	\begin{aligned}
		\label{eq:K^+_QB^+}
		K_Q^+B^+&=\twomat{ \int^x_{+\infty}\frac{iz}{z^2-1}\widetilde{q}}{\int^x_{+\infty}-\frac{i}{z^2-1}\widetilde{q}}{ \int^x_{+\infty}\frac{i}{z^2-1}\widetilde{q}}{\int^x_{+\infty}-\frac{iz}{z^2-1}\widetilde{q}}\\
		&\quad +\left(\begin{array}{cc}
      \int^x_{+\infty} -\frac{ie^{2i(x-y)\zeta(z)}}{z(z^2-1)}(\bar{q}-1+z^2(q-1))dy &  \int^x_{+\infty} \frac{ie^{2i(y-x)\zeta(z)}}{z^2-1}(z^2(\bar{q}-1)+q-1)dy\\
	   \int^x_{+\infty}-\frac{ie^{2i(x-y)\zeta(z)}}{z^2-1}(\bar{q}-1+z^2(q-1))dy     &   \int^x_{+\infty}\frac{ie^{2i(y-x)\zeta(z)}}{z(z^2-1)}(z^2(\bar{q}-1)+q-1)dy
        \end{array}\right).
	\end{aligned}
    \end{equation}
     We only have to consider
    \begin{equation}
    \label{L^2}
    f(z)=\int^x_{+\infty}e^{2i(x-y)\zeta(z)}(q-1)dy\in L^{\infty}_{x}(\mathbb{R}^+,L^2_z(|z|>5/4))
    \end{equation}
    since other terms are of $O(z^{-1})$ as $|z|\to +\infty$.  To simplify notations, we set $\gamma=z-\frac{1}{z}$, then for $z\in(0,+\infty)$, $\gamma\in (-\infty,+\infty)$, we then let $f(z)=g(\gamma)=g(z-\frac1z)$ and find that
	\begin{equation}
		\label{Variable change}
		\begin{aligned}
			\int_{5/4}^{+\infty}\left|f(z)\right|^2dz \leq\int_0^{+\infty}\left|f(z)\right|^2dz\leq \int_{\mathbb R}\left|g(\gamma)\right|^2\left(\frac 12+\frac {\gamma}{2\sqrt{\gamma^2+4}}\right)d\gamma.
		\end{aligned}
	\end{equation}
 By the\textit{ Plancherel}'s theorem,
	\begin{equation}
		\begin{aligned}
	\|g\|_{ L^{\infty}_{x}(\mathbb{R}^+,L^2_{\gamma}(\mathbb{R}))}&=\sup_{\phi\in C^{\infty}_0, \|\phi\|_{L^{2}}=1 }\left|\int_{\mathbb R}\phi(\gamma)\left(\int^x_{+\infty} e^{i(x-y)\gamma}(q-1)dy\right)d\gamma\right|\\
	&\leq C \sup_{\phi\in C^{\infty}_0, \|\phi\|_{L^{2}}=1}\int^{+\infty}_x\left|\mathcal{F}^{-1}\left[ \phi(\cdot)\right](x-y)\right||q-1|dy\\
	&\leq C \|q-1\|_{L^2(\mathbb{R}^+)}.
    \end{aligned}	
    \end{equation}
 \end{proof}
   \begin{lemma}
    \label{lm:r'}
		If $q\in\mathcal{I}^{2,1}_1$, then $r(z)\in H^1$ for $|z|>5/4$.
 \end{lemma}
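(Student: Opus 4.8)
\textbf{Proof proposal for Lemma \ref{lm:r'}.}

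The plan is to differentiate the representation of $b(z)$ and show that each resulting term lies in $L^2(|z|>5/4)$; combining this with Lemma \ref{lm:r} and $|a(z)|\ge 1$ will give $r\in H^1$ on that region. Since $r=b/a$ and $a$ is bounded below, the quotient rule shows it suffices to control $b'(z)$ together with $a'(z)$; and $a'(z)$ can be handled by exactly the same Volterra analysis applied to the determinant formula for $a$, so the substantive work is the estimate on $b'(z)$. Differentiating
$b(z)=\dfrac{\det[m^{(+)}_1(x;z),m^{(-)}_1(x;z)]}{1-z^{-2}}e^{-2i\zeta(z)x}$
produces three types of terms: (i) the derivative of the prefactor $(1-z^{-2})^{-1}e^{-2i\zeta(z)x}$, which is $O(z^{-1})$ times a bounded factor and whose only dangerous piece is the $x\,\zeta'(z)=x(1/2+1/(2z^2))$ coming from the oscillatory exponential; (ii) $\partial_z$ hitting the entries $m^{(\pm)}_1-B^\pm_1$, and (iii) $\partial_z$ hitting the constants $B^\pm$, which is lower order. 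As in Lemma \ref{lm:r}, everything reduces to $L^2_z(|z|>5/4)$ control, uniformly in $x\in\mathbb{R}^\pm$, of expressions of the schematic form $\int^x_{\pm\infty}(x-y)\,e^{\pm 2i(x-y)\zeta(z)}(q(y)-1)\,dy$ and $\int^x_{\pm\infty}e^{\pm 2i(x-y)\zeta(z)}\,\partial_z m^{(\pm)}(y;z)\,dy$.

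For the first type, I would again substitute $\gamma=\zeta(z)=z-1/z$ (or rather $2\zeta$), use $\int_{5/4}^{\infty}|f(z)|^2dz\le\int_{\mathbb{R}}|g(\gamma)|^2(\tfrac12+\tfrac{\gamma}{2\sqrt{\gamma^2+4}})\,d\gamma$ as in \eqref{Variable change}, and recognize $\int^x_{\pm\infty}(x-y)e^{i(x-y)\gamma}(q(y)-1)\,dy$ as (up to constants) the Fourier transform in $\gamma$ of the truncated function $(x-\cdot)(q(\cdot)-1)\mathbf{1}_{(\pm\infty,x)}$; Plancherel then bounds the $L^2_\gamma$ norm by $\|(x-y)(q(y)-1)\|_{L^2_y(\mathbb{R}^\pm)}$, which is finite and uniformly bounded in $x\in\mathbb{R}^\pm$ precisely because $q-\tanh\in L^{2,1}$ (this is where the weight $m=1$, i.e. the $L^{2,1}$ assumption in $\mathcal{I}^{2,1}_1$, gets used — it absorbs the extra factor of $(x-y)$). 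For the second type I first need $\partial_z m^{(\pm)}$ to exist and be suitably bounded: from $m^{(\pm)}-B^\pm=(1-K_Q^\pm)^{-1}K_Q^\pm B^\pm$ one differentiates to get $\partial_z m^{(\pm)}=(1-K_Q^\pm)^{-1}\big[(\partial_z K_Q^\pm)m^{(\pm)}+(\partial_z K_Q^\pm)B^\pm+K_Q^\pm(\partial_z B^\pm)\big]$, and since $\partial_z$ of the kernel again only brings down a factor $(x-y)$ from the exponential (the rational coefficients in $z$ stay bounded for $|z|>5/4$) plus lower-order rational pieces, the Volterra/Neumann series still converges and yields $\partial_z m^{(\pm)}\in L^\infty_x(\mathbb{R}^\pm;L^\infty_z(|z|>5/4))$, using the first derivative $q'\in L^{2,1}$ available in $\mathcal{I}^{2,1}_1$ where needed and the weight to control the $(x-y)$ factors in the $L^1_y$-type Volterra estimates. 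With that in hand, the term $\int^x_{\pm\infty}e^{\pm 2i(x-y)\zeta(z)}\partial_z m^{(\pm)}(y;z)\,dy$ is handled by the same change-of-variables-plus-Plancherel argument as in Lemma \ref{lm:r}, now with $\partial_z m^{(\pm)}$ (which is $L^\infty$ in $z$ and decays in $y$ like an $L^2_y$ or $L^1_y$ function) in place of $q-1$.

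The main obstacle I anticipate is bookkeeping the interplay between the $x$-dependence and the $z$-singularity structure: the factor $x\,\zeta'(z)$ from differentiating the oscillatory exponential is linear in $x$, so one cannot hope for an $x$-uniform $L^2_z$ bound without pairing that $x$ with a decaying weight on $q-\tanh$; the change of variables is what converts this $x$-growth into the harmless statement ``Fourier transform of $x\mapsto (x-y)(q(y)-1)$ truncated to $y<x$,'' whose $L^2_\gamma$ norm is controlled by the $L^{2,1}_y$ norm of $q-\tanh$ restricted to $\mathbb{R}^\pm$, hence bounded uniformly in $x$. A secondary technical point is justifying differentiation under the Volterra inversion and the convergence of the differentiated Neumann series on $|z|>5/4$; this is routine given Lemma \ref{lemma: 1} and the fact that $z=\pm 1$ stays outside the region, so the rational prefactors $z/(z^2-1)$ etc.\ and their $z$-derivatives are uniformly bounded there. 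I do not expect any genuinely new difficulty beyond what already appeared in Lemma \ref{lm:r}; the proof is that argument carried one derivative further, at the cost of one extra power of the spatial weight, which is exactly why the hypothesis is $q\in\mathcal{I}^{2,1}_1$ rather than $\mathcal{I}^{2,0}_0$.
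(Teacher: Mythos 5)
Your overall strategy is the same as the paper's: differentiate the Volterra equation \eqref{def:m}, push the term where $\partial_z$ hits the oscillatory exponential through the substitution $\gamma=z-1/z$ and Plancherel, and let the spatial weight on $q-\tanh$ absorb the factor $(x-y)$. That part of your argument is correct (note only that $\mathcal{I}^{2,1}_1$ gives $q-\tanh\in L^{2,2}$, not $L^{2,1}$, so you actually have more weight available than you invoke).

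The gap is in the remaining two terms, $[\partial_z K_Q^{\pm}](m^{(\pm)}-B^{\pm})$ and $K_Q^{\pm}\,\partial_z(m^{(\pm)}-B^{\pm})$. First, you propose to handle $\int_{\pm\infty}^x e^{\pm 2i(x-y)\zeta(z)}\partial_z m^{(\pm)}(y;z)\,dy$ ``by the same change-of-variables-plus-Plancherel argument'': this is not legitimate, because Plancherel identifies the $L^2_\gamma$ norm only when the non-oscillatory factor is independent of $\gamma$, and $\partial_z m^{(\pm)}(y;z)$ is not. The correct move (the paper's) is to treat $K_Q^{\pm}\partial_z(m^{(\pm)}-B^{\pm})$ as the Volterra operator acting on the unknown and invert $\mathbf{1}-K_Q^{\pm}$ in $L^{\infty}_x(\mathbb{R}^{\pm};L^2_z(|z|>5/4))$. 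Second, your claimed bound $\partial_z m^{(\pm)}\in L^{\infty}_x L^{\infty}_z(|z|>5/4)$ is both insufficient --- an $L^{\infty}_z$ bound on the unbounded set $|z|>5/4$ does not yield the $L^2_z$ bound needed for $r'\in L^2$ --- and beside the point: what is actually required to control $[\partial_z K_Q^{\pm}](m^{(\pm)}-B^{\pm})$, whose kernel carries the extra factor $(x-y)(q-1)$, is an integrability statement for $m^{(\pm)}-B^{\pm}$ in the $y$-variable. The paper supplies the auxiliary estimate $m^{(\pm)}-B^{\pm}\in L^2_x(\mathbb{R}^{\pm};L^2_z(|z|>5/4))$, proved via Minkowski's integral inequality and Hardy's inequality applied to $K_Q^{\pm}B^{\pm}$, and then closes with Cauchy--Schwarz in $y$; this $L^2_xL^2_z$ estimate is the one genuinely new ingredient of this lemma relative to Lemma \ref{lm:r} and is absent from your proposal. (A variant of your idea can be salvaged: since $q-\tanh\in L^{2,2}$ implies $(1+|y|)(q(y)-1)\in L^1(\mathbb{R}^{+})$, one may pair the $L^1_y$ norm of $(x-y)(q-1)$ with the $L^{\infty}_y L^2_z$ bound on $m^{(\pm)}-B^{\pm}$ already obtained in Lemma \ref{lm:r}; but that argument must be made explicitly, and it is not the Plancherel step you wrote.)
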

  \begin{proof}
    Using the relation \ref{eq:S(z)} and letting $x=0$, we obtain
	\begin{equation}
		\label{eq:S'(z)}
		\dfrac {\partial}{\partial z}S(z)=\left(\dfrac {\partial}{\partial z}m^{(+)}(0;z)^{-1}\right)m^{(-)}(0;z)+m^{(+)}(0;z)^{-1}\left(\dfrac {\partial}{\partial z}m^{(-)}(0;z)\right).
	\end{equation}
	By the standard \textit{Volterra} theory, $\|m^{(\pm)}(x;z)\|_{ L^{\infty}_{x}(\mathbb{R}^{\pm};L^{\infty}_{z}(|z|>5/4))}<\infty$. So we only need to show
	\begin{equation}
		\dfrac {\partial}{\partial z}m^{(\pm)}(0;z) \in L^2_z(|z|>5/4).
	\end{equation}
	Indeed we only need estimates on $m^{(+)}(x;z)$ for $x\geq 0$ and estimates on $m^{(-)}(x;z)$ for $x\leq 0$ follows from symmetry.
	From \eqref{Volterra; origin} we have
\begin{equation}
			\label{def:m}
			\begin{aligned}
				\dfrac {\partial}{\partial z}(m^{(\pm)}-B^{\pm})&=\dfrac {\partial}{\partial z}(K^{\pm}_QB^{\pm})+\left[\dfrac {\partial K^{\pm}_Q}{\partial z}\right](m^{(\pm)}-B^{\pm})+K^{\pm}_Q\dfrac {\partial }{\partial z}\left(m^{(\pm)}-B^{\pm}\right)\\
				&=H_1^{\pm}(x,z)+H_2^{\pm}(x,z)+K^{\pm}_Q\dfrac {\partial }{\partial z}\left(m^{(\pm)}-B^{\pm}\right).
			\end{aligned}
		\end{equation}
	We only need to show 
		$$H_1^{+}(x,z),H_2^{+}(x,z)\in L^{\infty}_{x}(\mathbb{R}^+;L^2_{z}(|z|>5/4)).$$
	For matrix-valued functions, we study one term from each of them and  the proof of the rest are similar. For $H_1^+(x,z)$ we differentiate \eqref{eq:K^+_QB^+} and treat the following term: 
		\begin{equation}
			\begin{aligned}
			h_1^+(x,z)&=\dfrac{\partial}{\partial z}\left(\int^{+\infty}_x\dfrac{iz^2}{z^2-1}e^{2i(x-y)\zeta(z)}(q-1)dy\right)\\
                      &=-\int^{+\infty}_x\dfrac{2iz}{(z^2-1)^2}e^{2i(x-y)\zeta(z)}(q-1)dy-\int^{+\infty}_x\dfrac{z^2+1}{z^2-1}e^{2i(x-y)\zeta(z)}(x-y)(q-1)dy\\
			          &=h^+_{1,1}(x,z)+h^+_{1,2}(x,z).
					\end{aligned}
		\end{equation}
	We will only show that 	$h^+_{1,2}(x,z)\in L^{\infty}_{x}(\mathbb{R}^+;L^2_{z}(|z|>5/4))$. Indeed, we only need to show that
		\begin{equation}
		\tilde{h}^+_{1,2}(x,z)=\int^{+\infty}_xe^{2i(x-y)\zeta(z)}(x-y)(q-1)dy\in L^{\infty}_{x}(\mathbb{R}^+;L^2_{z}(z>M)).
		\end{equation}
	Setting $\gamma=z-\frac1z$
		\begin{equation}
			\begin{aligned}
			\|\tilde{h}^+_{1,2}(x,\gamma)\|_{ L^{\infty}_{x}(\mathbb{R}^+,L^2_{\gamma}(\mathbb{R}))}&=\sup_{\phi\in C^{\infty}_0, \|\phi\|_{L^{2}}=1 }\left|\int_{\mathbb R}\phi(\gamma)\left(\int^{+\infty}_xe^{i(x-y)\gamma}(x-y)(q-1)dy\right)d\gamma\right|\\
	    &\leq C \sup_{\phi\in C^{\infty}_0, \|\phi\|_{L^{2}}=1}\int^{+\infty}_x\left|\mathcal{F}^{-1}\left[ \phi(\cdot)\right](x-y)\right||q-1||y|dy\\
	    &\leq C \|q-1\|_{L^{2,1}(\mathbb{R}^+)}.
		\end{aligned}
	\end{equation}
	We thus show that $H_1^{+}(x,z)\in L^{\infty}_{x}(\mathbb{R}^+;L^2_{z}(|z|>5/4))$. For $H_2^{+}(x,z)$, we only consider the case of the first column:
     	\begin{equation}
 \begin{aligned}
  H_{21}^{+}(x,z)= \left[\dfrac {\partial K^+_Q}{\partial z}\right]\left(m^{(+)}_1-\left(\begin{array}{c}
	1\\
	\frac{1}{z}
  \end{array}\right)\right)=\int_{+\infty}^x\left(\begin{array}{cc}
			 K_{11}^+ & K_{12}^+\\
			 K_{21}^+ & K_{22}^+
		\end{array}\right)\left(m^{(+)}_1-\left(\begin{array}{c}
			1\\
		\frac{1}{z}
	\end{array}\right)\right)dy
   \end{aligned}
	\end{equation}
 where
  \begin{equation}
 \begin{aligned}
		K_{11}^{+}&=-i(q-1)\left(\frac{z^2+1}{(z^2-1)^2}-\frac{z^2+1}{(z^2-1)^2}e^{2i(x-y)\zeta(z)}+\frac{i(z^2+1)(x-y)}{(z^3-z)}e^{2i(x-y)\zeta(z)}\right), \end{aligned}
	\end{equation}
 \begin{equation}
		\begin{aligned}
		K_{12}^+&=i(\bar{q}-1)\left(-\frac{2z}{(z^2-1)^2}+\frac{2z}{(z^2-1)^2}e^{2i(x-y)\zeta(z)}-\frac{i(z^2+1)(x-y)}{(z^2-1)z^2}e^{2i(x-y)\zeta(z)}\right),
      \end{aligned}
	\end{equation}
\begin{equation}
		\begin{aligned}
		K_{21}^+&=-i(q-1)\left(\frac{2z}{(z^2-1)^2}-\frac{2z}{(z^2-1)^2}e^{2i(x-y)\zeta(z)}+\frac{i(z^2+1)(x-y)}{(z^2-1)}e^{2i(x-y)\zeta(z)}\right),
        \end{aligned}
	\end{equation}
\begin{equation}
  \begin{aligned}
		K_{22}^{+}&=i(\bar{q}-1)\left(-\frac{z^2+1}{(z^2-1)^2}+\frac{z^2+1}{(z^2-1)^2}e^{2i(x-y)\zeta(z)}-\frac{i(z^2+1)(x-y)}{(z^3-z)}e^{2i(x-y)\zeta(z)}\right) .\end{aligned}
\end{equation}
	So we need to show that $m^{(+)}-B^+\in L^2_x(\mathbb{R}^+;L^2_z(|z|>5/4))$. We want to establish the following inequality 
	\begin{align}
		\|m^{(+)}-B^+\|_{L^2_x(\mathbb{R}^+;L^2_z(|z|>5/4))}& \leq\|K^+_QB^+\|_{L^2_x(\mathbb{R}^+;L^2_z(|z|>5/4))}\\
  \nonumber
            &\quad +\|K^+_Q(m^{(+)}-B^+)\|_{L^2_x(\mathbb{R}^+;L^2_z(|z|>5/4)}.
	\end{align}
    For $K^+_QB^+$ given by \eqref{eq:K^+_QB^+}, we treat the following terms:
	\begin{equation}
		\begin{aligned}
			h_{2,1}^+(x,z)=\int^x_{+\infty}\frac{iz}{z^2-1}(q-1)dy,
		\end{aligned}
	\end{equation}
	\begin{equation}
		\begin{aligned}
			h_{2,2}^+(x,z)=-\int^x_{+\infty}\frac{iz^2}{z^2-1}e^{2i(x-y)\zeta(z)}(q-1)dy.
		\end{aligned}
	\end{equation}
	Indeed, we only need to study that
	\begin{equation}
		\tilde{h}^+_{2,2}(x,z)=-\int^x_{+\infty}e^{2i(x-y)\zeta(z)}(q-1)dy.
		\end{equation}
	We then apply \textit{Minkowski’s} inequality and \textit{Hardy's} inequality
	\begin{equation}
	\left\|h_{2,1}^+(x,z)\right\|_{L^2_{z}(|z|>5/4)}\leq\int_x^{+\infty}|q-1|dy.
    \end{equation}
	\begin{equation}
		\begin{aligned}
		\left\|\int_x^{+\infty}|q-1|dy\right\|_{L^2_x(R^+)}&=\left(\int_0^{+\infty}\left(\int_x^{+\infty}|q(y)-1|dy\right)^2dx\right)^{\frac12}\\
		&=\left(\int_0^{+\infty}\left(\int_1^{+\infty}x|q(xy)-1|dy\right)^2dx\right)^{\frac12}\\
		&\leq\int_1^{+\infty}\left(\int_0^{+\infty}|x|^2|q(xy)-1|^2dx\right)^{\frac12}dy\\
		&\leq\int_1^{+\infty}y^{-\frac32}\left(\int_0^{+\infty}|xy|^2|q(xy)-1|^2dxy\right)^{\frac12}dy\\
		&\lesssim \|q-1\|_{L^{2,1}(\mathbb{R}^+)}.
	\end{aligned}
	\end{equation}
	Setting $\gamma=z-\frac1z$
	\begin{equation}
		\begin{aligned}
		\left\|\tilde{h}_{2,2}^+(x,\gamma)\right\|_{L^2_{\gamma}(\mathbb{R})}&=\sup_{\phi\in C^{\infty}_0, \|\phi\|_{L^{2}}=1 }\left|\int_{\mathbb R}\phi(\gamma)\left(\int^x_{+\infty}e^{i(x-y)\gamma}(q-1)dy\right)d\gamma\right|\\
	   &\leq C \sup_{\phi\in C^{\infty}_0, \|\phi\|_{L^{2}}=1}\int^{+\infty}_x\left|\mathcal{F}^{-1}\left[ \phi(\cdot)\right](x-y)\right||q-1|dy\\
	    &\leq C \left(\int^{+\infty}_x|q-1|^2dy\right)^{\frac12}.
\end{aligned}
\end{equation}
	\begin{equation}
		\begin{aligned}
			\left\|\left(\int^{+\infty}_x|q-1|^2dy\right)^{\frac12}\right\|_{L^2_x(\mathbb{R}^+)}&=\left(\int_0^{+\infty}\left(\int_x^{+\infty}|q(y)-1|^2dy\right)dx\right)^{\frac12}\\
			&=\left(\int_0^{+\infty}|y||q(y)-1|^2dy\right)^{\frac12} <\infty.
		\end{aligned}
	\end{equation}
	This shows that $r'(z)\in L^2(\mathbb{R}\setminus[-5/4, 5/4])$.
 \end{proof}
 \begin{proposition}
\label{prop:r-low}
  If $q_0\in\mathcal{I}^{2,1}_1$, then $r(z)\in L^{2,1}(\mathbb{R})$.
  \end{proposition}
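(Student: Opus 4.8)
The plan is to estimate $(1+|\cdot|)r$ in $L^2(\mathbb{R})$ along the three‑fold split of Remark \ref{rmk: r}; only the region $|z|>5/4$ requires real work. On the compact set $3/4\le|z|\le 5/4$ the reflection coefficient $r=b/a$ is continuous — by Lemma \ref{lem:continuous} (note $q-\tanh\in L^{2,2}\subset L^{1,1}$) together with the generic assumption \eqref{cond: gen}, which makes $r$ extend continuously across $z=\pm1$ with $r(\pm1)=\mp1$ — and $|a|\ge1$, so $r$ is bounded there and $(1+|z|)^2|r(z)|^2$ is integrable on it. For $|z|<3/4$ I would use the symmetry $r(z)=\overline{r(z^{-1})}$ and the substitution $w=z^{-1}$: since $1+|z|\le 7/4$ on this region,
\[
\int_{|z|<3/4}(1+|z|)^2|r(z)|^2\,dz\le \frac{49}{16}\int_{|z|<3/4}|r(z^{-1})|^2\,dz=\frac{49}{16}\int_{|w|>4/3}\frac{|r(w)|^2}{w^2}\,dw\le\frac{49}{16}\int_{|w|>5/4}|r(w)|^2\,dw,
\]
which is finite by Lemma \ref{lm:r}. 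It therefore remains to prove $zr(z)\in L^2(|z|>5/4)$, and since $|a(z)|\ge1$ this is the same as $zb(z)\in L^2(|z|>5/4)$.

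To handle $|z|>5/4$, evaluate $b$ at $x=0$, so that $b(z)=\det[m_1^{(+)}(0;z),m_1^{(-)}(0;z)]/(1-z^{-2})$; since $1/(1-z^{-2})$ is bounded for $|z|>5/4$ it suffices to show $z\det[m_1^{(+)}(0),m_1^{(-)}(0)]\in L^2(|z|>5/4)$. Using the expansion from the proof of Lemma \ref{lm:r},
\[
\det[m_1^{(+)},m_1^{(-)}]=(m_{11}^{(+)}-1)m_{21}^{(-)}-m_{21}^{(+)}(m_{11}^{(-)}-1)+\bigl(m_{21}^{(-)}-m_{21}^{(+)}\bigr),
\]
the two mixed products, after multiplication by $z$, split (via $m_{21}^{(\pm)}(0)=\pm z^{-1}+(K_Q^{\pm}m_1^{(\pm)})_2(0)$) into the terms $-(m^{(\pm)}-B^{\pm})_{11}(0)$, which lie in $L^2_z(|z|>5/4)$ by the Volterra estimate of Lemma \ref{lm:r}, plus iterated (bilinear) contributions; the latter are controlled by Plancherel exactly in the spirit of Lemmas \ref{lm:r}--\ref{lm:r'}, using $q^{(1)}\in L^{2,1}\subset L^2$ and $q-\tanh\in L^{2,2}\subset L^1\cap L^{2,1}$. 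The genuinely new term is $z\bigl(m_{21}^{(-)}(0)-m_{21}^{(+)}(0)\bigr)=-2+z\,(K_Q^{-}m_1^{(-)})_2(0)-z\,(K_Q^{+}m_1^{(+)})_2(0)$, and after replacing the rational coefficients of \eqref{eq:KQ-1} by their large‑$z$ limits the dominant part of $z\,(K_Q^{\pm}m_1^{(\pm)})_2(0)$ is $-i\gamma\int_{\pm\infty}^{0}\bigl(q(y)\mp1\bigr)e^{-i\gamma y}\,dy$, with $\gamma=z-z^{-1}$, together with lower‑order and bilinear terms controlled as above.

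The key point is a cancellation. Since $q-\tanh\in H^1(\mathbb{R})\subset C_0(\mathbb{R})$ we have $q-1\to0$ at $+\infty$ and $q+1\to0$ at $-\infty$, so integration by parts in $y$ is legitimate; it turns the dominant part of $z\,(K_Q^{+}m_1^{(+)})_2(0)$ into $\bigl(q(0)-1\bigr)$ (the boundary value at $y=0$) plus the Fourier transform of $q'$ restricted to $\mathbb{R}^{+}$, and the dominant part of $z\,(K_Q^{-}m_1^{(-)})_2(0)$ into $\bigl(q(0)+1\bigr)$ minus the Fourier transform of $q'$ restricted to $\mathbb{R}^{-}$. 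The boundary constants conspire,
\[
-2+\bigl(q(0)+1\bigr)-\bigl(q(0)-1\bigr)=0,
\]
the non‑decaying obstruction disappears, and what survives is, modulo $L^2_\gamma$‑bounded remainders, minus the Fourier transform of $q'$ on $\mathbb{R}$, which lies in $L^2_\gamma$ because $q'=(q-\tanh)'+\sech^2\in L^2(\mathbb{R})$ (here $q^{(1)}\in L^{2,1}\subset L^2$ enters). Transporting this back to the $z$‑variable through the change of variables $\gamma=z-z^{-1}$, whose Jacobian is bounded above and below on $|z|>5/4$ (as already used in Lemma \ref{lm:r}), gives $zb(z)\in L^2(|z|>5/4)$, and combined with the two easy regions this yields $r\in L^{2,1}(\mathbb{R})$.

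I expect the main obstacle to be exactly this cancellation. In contrast with the vanishing‑background case, the Jost determinant $\det[m_1^{(+)}(0;z),m_1^{(-)}(0;z)]$ does \emph{not} decay as $|z|\to\infty$ — it is an $L^2_\gamma$‑valued oscillatory quantity, not an $o(1)$ one — so no crude estimate survives multiplication by the weight $z$; one must isolate the non‑oscillatory remnant produced by the off‑diagonal $\pm z^{-1}$ entries of $B^{\pm}$ and verify that it is annihilated by the $y=0$ boundary terms of the integration by parts. The rest is bookkeeping: a single integration by parts — hence merely $q^{(1)}\in L^{2,1}$ — supplies the weight in $z$, while the iterated terms of the Neumann series for $(1-K_Q^{\pm})^{-1}$ are where the extra decay $q-\tanh\in L^{2,2}$, rather than $L^{2,1}$, is spent.
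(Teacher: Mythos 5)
Your proposal is correct and follows essentially the same route as the paper: reduce to $zb(z)\in L^2(|z|>5/4)$ via the Jost determinant expansion, isolate the $O(z^{-1})$ parts of $m_{21}^{(\pm)}$, and observe that the non-decaying contribution cancels — your boundary-term identity $-2+(q(0)+1)-(q(0)-1)=0$ is exactly the paper's observation that both $m_{21}^{(\pm)}$ share the leading coefficient $q(x)/z$ — before spending one integration by parts ($q'\in L^2$) and Plancherel on what remains. The treatment of the mixed products and iterated Volterra terms also matches the paper's estimates.
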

 \begin{proof}
According to Lemma~\ref{lem:continuous} and Proposition~\ref{prop:r}, we only need to prove that $zr \in L^2( |z| > M )$ for some sufficiently large $M > 1$. Recall that
\begin{align} 
b(z) = \frac{\det\left[\psi^{(+)}_1(x;z), \psi^{(-)}_1(x;z)\right]}{1 - z^{-2}} = \frac{\det\left[m^{(+)}_1(x;z), m^{(-)}_1(x;z)\right]}{1 - z^{-2}} e^{-2i\zeta(z)x}.
\end{align}
We observe that 
\begin{align}
\det\left[m^{(+)}_1, m^{(-)}_1\right] &= m^{(+)}_{11}m^{(-)}_{21} - m^{(+)}_{21}m^{(-)}_{11} \\
\nonumber
&= (m^{(+)}_{11} - 1)m^{(-)}_{21} - m^{(+)}_{21}(m^{(-)}_{11} - 1) + m^{(-)}_{21} - m^{(+)}_{21}\\
\nonumber
&= (m^{(+)}_{11} - 1)m^{(-)}_{21} - m^{(+)}_{21}(m^{(-)}_{11} - 1) + \left(m^{(-)}_{21}-\frac{q(x)}{z} \right)- \left(m^{(+)}_{21}-\frac{q(x)}{z}\right). 
\end{align}
We first point out that from \eqref{eq:K^+_QB^+} we can deduce that
\begin{equation}
    \norm{z(m^{(\pm)}_{11} - 1)m^{(\mp)}_{21}}{L^2(|z|>5/4)}\leq \norm{z(m^{(\pm)}_{11} - 1)}{L^{\infty}(|z|>5/4)}\norm{m^{(\mp)}_{21}}{L^{2}(|z|>5/4}.
\end{equation}
To show $ zm^{(\pm)}_{21}-q \in L^2(|z|>M)$, recall \eqref{eq:K^+_QB^+}, through integration by parts, for the diagonal term, we have that
\begin{equation}
\label{zm}
    \begin{aligned}
        i\int_{+\infty}^x(q(y)-1)e^{2i(x-y)\zeta(z)}dy&=-\frac{z}{z^2-1}\int_{+\infty}^x(q(y)-1)de^{2i(x-y)\zeta(z)}\\
        &=-\frac{z}{z^2-1}(q(x)-1)+\frac{z}{z^2-1}\int_{+\infty}^xq'(y)e^{2i(x-y)\zeta(z)}dy.
    \end{aligned}
\end{equation}
We then conclude that
\begin{equation}
\int_{+\infty}^x (q(y) - 1)e^{2i(x - y)\zeta(z)}\,dy = \mathcal{O}\left(\frac{1}{z}\right). 
\end{equation}
Thus $(\ref{eq:K^+_QB^+})$ becomes
\begin{equation}
K^+_QB^+=\frac{1}{z}\left(\begin{array}{cc}
-i\int^{\infty}_x(q(y)+\overline{q(y)}-2)dy & \overline{q}(x)-1  \\
{q}(x)-1  & i\int^{\infty}_x(q(y)+\overline{q(y)}-2)dy
\end{array}\right)+o(z^{-1}).
\end{equation}
We further deduce that
\begin{equation}
\label{eq: KQ2}
(K^+_Q)^{2}B^+=\frac{1}{z}\left(\begin{array}{cc}
-i\int^{\infty}_x|q(y)-1|^2dy & 0  \\
0 & i\int^{\infty}_x|q(y)-1|^2dy
\end{array}\right)+o(z^{-1}).
\end{equation}
It is important to observe that the off-diagonal entries of $(K^+_Q)^n B^+$ for $n\geq 3$ have the oscillatory factor $e^{\pm 2i(x-y)\zeta}$ which will vanish due to the \textit{Riemann-Lebesgue} lemma. We then observe that as $z\to \infty$,
\begin{equation}
\begin{aligned}
z\left(m^{(+)}(x;z) - B^{+}(z) - \frac{1}{z}\begin{pmatrix}
i\int^{\infty}_x(1 - |q(y)|^2)\,dy & \overline{q}(x) - 1 \\
q(x) - 1 & -i\int^{\infty}_x(1 - |q(y)|^2)\,dy
\end{pmatrix}\right)=\mathcal{O}(1).
\end{aligned}
\end{equation}
Moreover, we observe that the off-diagonal entries of $(K^+_Q)^n B^+$ for $n\geq 3$ have the oscillatory factor $e^{\pm 2i(x-y)\zeta}$  lead to $L^2-$integrability by \textit{Plancherel's} identity. 
In fact, we only need to prove that the following property
\begin{equation}
    \int_{-\infty}^x q'(y)e^{2i(x-y)\zeta(z)}\,dy \in L^2(|z| > M)
\end{equation}
which is a direct consequence of the \textit{Plancherel's} identity.
\end{proof}
 We hereby conclude the first step of proving Proposition \ref{prop:r} listed in Remark \ref{rmk: r}.
  \subsubsection{Near the origin}
 \begin{lemma}
    \label{lm:r-s}
		If $q\in\mathcal{I}^{2,1}_1$, then $r(z)\in L^2$ for $|z|<3/4$.
 \end{lemma}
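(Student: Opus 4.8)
The plan is to reduce the region $|z|<3/4$ to the already-treated region $|z|>5/4$ by exploiting the symmetry $r(z)=\overline{r(z^{-1})}$. First I would note that for $|z|<3/4$ the point $z^{-1}$ lies in $|z^{-1}|>4/3>5/4$, so Lemma~\ref{lm:r} gives $r(z^{-1})\in L^2(|z^{-1}|>5/4)$. Then the desired $L^2$-bound on $r$ over $|z|<3/4$ is obtained by the change of variable $w=z^{-1}$: writing $\int_{|z|<3/4}|r(z)|^2\,dz = \int_{|w|>4/3}|r(w^{-1})|^2\,|w|^{-2}\,dw = \int_{|w|>4/3}|r(w)|^2\,|w|^{-2}\,dw$, and since $|w|^{-2}$ is bounded on $|w|>4/3$ this is controlled by $\|r\|_{L^2(|w|>5/4)}^2<\infty$. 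The only subtlety is that one must use the symmetry relation $\overline{r(z^{-1})}=r(z)$ recorded just after the definition of $r(z)$, so that the two expressions genuinely coincide.

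Alternatively — and this is likely what the authors intend given the remark in Remark~\ref{rmk: r} about differentiating $\zeta=z/2-1/(2z)$ — one can run the Volterra analysis of Lemma~\ref{lm:r} directly but after the substitution $z\mapsto z^{-1}$, which maps the origin to infinity and removes the singularity there. Concretely, I would set $u(z):=r(z^{-1})$, observe that $u$ solves a scattering problem of the same structural form (the Jost solutions at $\pm\infty$ transform into one another under $z\mapsto z^{-1}$, as recorded in the symmetry relations for $\psi_j^{(\pm)}$), and that the oscillatory phase $e^{2i(x-y)\zeta(z)}$ with $\zeta(z)=\tfrac12(z-z^{-1})$ is invariant under $z\mapsto z^{-1}$ up to sign since $\zeta(z^{-1})=-\zeta(z)$. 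Hence the same Plancherel/Riemann–Lebesgue computation as in Lemma~\ref{lm:r} applies verbatim to $u$ on $|z|>5/4$, giving $u\in L^2(|z|>5/4)$, i.e. $r\in L^2(|z|<3/4)$.

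The one genuine obstacle — and the reason the authors flag it in Remark~\ref{rmk: r} — is bookkeeping of the Jacobian weights. Under $\gamma=z-z^{-1}$ one has $d\gamma=(1+z^{-2})\,dz$, and near $z=0$ the factor $z^{-2}$ blows up; so one cannot simply copy the estimate $\int|f(z)|^2\,dz\le C\int|g(\gamma)|^2\,d\gamma$ used for $|z|>5/4$. The fix is to carry the weight explicitly: on $|z|<3/4$ we have $dz = (1+z^{-2})^{-1}\,d\gamma$ with $(1+z^{-2})^{-1}\le 1$, so $\int_{|z|<3/4}|f(z)|^2\,dz \le \int_{\mathbb R}|g(\gamma)|^2\,d\gamma$, which is again controlled by $\|q-\tanh\|_{L^{2,1}}$ via Plancherel exactly as before. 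Thus the weight actually works in our favour on this region, and no extra decay beyond $q_0\in\mathcal I^{2,1}_1$ is needed. I would then assemble these pieces: the $B^\pm$ term and the leading Volterra iterate are handled by the Plancherel bound with the favourable Jacobian, the remaining iterates by the standard Volterra-series estimate together with $|a(z)|\ge 1$, and the off-diagonal/oscillatory contributions by Riemann–Lebesgue, concluding $r\in L^2(|z|<3/4)$.
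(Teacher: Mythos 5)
Your first paragraph is exactly the paper's proof: the symmetry $r(z)=\overline{r(z^{-1})}$ plus the change of variable $\beta=z^{-1}$ gives $\int_{|z|<3/4}|r(z)|^2\,dz=\int_{|\beta|>4/3}|r(\beta)|^2\beta^{-2}\,d\beta$, which is finite by Lemma~\ref{lm:r} since $4/3>5/4$ and the Jacobian factor is bounded there. The alternative routes you sketch afterwards are unnecessary; the argument is complete and matches the paper.
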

   \begin{proof}
         From the symmetry $r(z^{-1})=\overline{r}(z)$ and change of variable $z\mapsto \beta=1/z$ it is easy to deduce that
         \begin{equation}
             \int_{|z|<3/4} \left\vert r(z) \right\vert^2 dz=\int_{|\beta|>4/3} \frac{\left\vert r(\beta) \right\vert^2}{\beta^2} d\beta<+\infty.
         \end{equation}
         \end{proof}
         From Lemma~\ref{lem:continuous}, we ascertain that the functions \(\psi^{(\pm)}(x;z)\) are continuous at \(z = \pm 1\). Furthermore, considering that \(\lim_{z \to \pm 1} r(z) = \mp 1\), it follows that \(r(z) \in L^2(\mathbb{R})\).
   
 \begin{lemma}
    \label{lm:r'-s}
		If $q\in\mathcal{I}^{2,1}_1$, then $r(z)\in H^1$ for $|z|<3/4$.
 \end{lemma}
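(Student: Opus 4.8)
The plan is to avoid differentiating the oscillatory phase $e^{2i(x-y)\zeta(z)}$ anywhere near $z=0$, where $\zeta'(z)=\tfrac12(1+z^{-2})$ has a pole of order two; instead, as anticipated in Remark~\ref{rmk: r}, I would differentiate the symmetry relation $r(z)=\overline{r(z^{-1})}$ in the real variable $z$. This gives $r'(z)=-z^{-2}\,\overline{r'(z^{-1})}$, so $|r'(z)|^2=z^{-4}\,|r'(z^{-1})|^2$, and under the change of variable $\beta=z^{-1}$ — which maps $\{0<|z|<3/4\}$ onto $\{|\beta|>4/3\}$ with $dz=-\beta^{-2}\,d\beta$ — one obtains
\begin{equation*}
\int_{|z|<3/4}|r'(z)|^2\,dz=\int_{|\beta|>4/3}\beta^{2}\,|r'(\beta)|^2\,d\beta .
\end{equation*}
Since $4/3>5/4$, it therefore suffices to prove the \emph{weighted} refinement of Lemma~\ref{lm:r'}, namely $\langle z\rangle\,r'(z)\in L^2(|z|>5/4)$; combined with Lemma~\ref{lm:r-s} this yields $r\in H^1(|z|<3/4)$.

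To establish that weighted bound I would rerun the proof of Lemma~\ref{lm:r'} carrying one extra factor of $z$ throughout. By the identity \eqref{eq:S'(z)} and the standard Volterra bounds it is enough to show $z\,\partial_z m^{(+)}(0;z)\in L^2_z(|z|>5/4)$ (the estimate for $m^{(-)}$ on $\mathbb R^-$ following by symmetry). Writing $\partial_z(m^{(+)}-B^+)=H_1^++H_2^++K^+_Q\,\partial_z(m^{(+)}-B^+)$ as in \eqref{def:m}: in $zH_1^+$ the prefactor-derivative pieces each gain a factor $z^{-2}$ and hence stay bounded after multiplication by $z$ (in fact $\mathcal{O}(z^{-1})$ with an oscillatory factor, so $L^2_z$), while the only borderline piece is the one coming from differentiating the exponential, namely $z\int_x^{+\infty}e^{2i(x-y)\zeta(z)}(x-y)(q(y)-1)\,dy$; the term $zH_2^+$ reduces to knowing $z(m^{(+)}-B^+)\in L^2_x(\mathbb R^+;L^2_z(|z|>5/4))$, which is precisely the weighted estimate on the Jost function produced in the proof of Proposition~\ref{prop:r-low}; and $zK^+_Q\,\partial_z(m^{(+)}-B^+)$ is absorbed by the Volterra inversion $(\mathbf{1}-K^+_Q)^{-1}$. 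For the borderline piece, passing to $\gamma=z-z^{-1}$ (so $2\zeta=\gamma$, and on $|z|>5/4$ one has $|\gamma|\asymp|z|$ with $d\gamma/dz$ bounded above and below) turns the extra factor $z$ into a bounded multiple of $\gamma$, and $\gamma\int_x^{+\infty}e^{i(x-y)\gamma}(x-y)(q(y)-1)\,dy$ is, by Plancherel, controlled in $L^2_\gamma$ by the $L^2_y$ norm of $\partial_y\!\bigl[(x-y)(q(y)-1)\mathbf{1}_{\{y>x\}}\bigr]=-(q(y)-1)+(x-y)q'(y)$, the boundary term at $y=x$ vanishing because $(x-y)(q(y)-1)$ does. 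Uniformly in $x\ge0$ this is $\lesssim\|q-1\|_{L^2(\mathbb R^+)}+\|q'\|_{L^{2,1}(\mathbb R^+)}$, which is finite precisely because $q\in\mathcal I^{2,1}_1$, i.e. $q-\tanh\in L^{2,2}$ and $q'\in L^{2,1}$.

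The genuine obstacle is exactly the second-order singularity of $\zeta'$ at the origin, which forbids differentiating the Volterra representation directly on $|z|<3/4$; the involution $z\mapsto z^{-1}$ trades it for the bookkeeping task of propagating a single extra weight through the large-$|z|$ Volterra estimates, the only price being the one additional moment ($q'\in L^{2,1}$, $q-\tanh\in L^{2,2}$) already built into $\mathcal I^{2,1}_1$. The routine but slightly tedious part is checking that this extra weight indeed passes through cleanly — that all oscillatory and off-diagonal pieces remain square integrable in $z$ and that $K^+_Q\,\partial_z(m^{(+)}-B^+)$ stays contractive in the weighted norm — which is a direct adaptation of Lemma~\ref{lm:r'} and Proposition~\ref{prop:r-low}.
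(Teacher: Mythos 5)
Your overall strategy is exactly the paper's: use the symmetry $r(z)=\overline{r(z^{-1})}$ and the change of variable $\beta=z^{-1}$ to convert the estimate on $|z|<3/4$ into the weighted estimate $zr'(z)\in L^2(|z|>4/3)$, and then push one extra factor of $z$ through the decomposition \eqref{def:m}. Your treatment of the borderline piece of $zH_1^+$ — writing the extra $z$ as (essentially) the Fourier multiplier $\gamma=z-z^{-1}$ and invoking Plancherel to trade it for a $y$-derivative of $(x-y)(q(y)-1)\mathbf{1}_{\{y>x\}}$, with the boundary term vanishing — is equivalent to the paper's explicit integration by parts $ze^{2i(x-y)\zeta}=i\partial_y e^{2i(x-y)\zeta}+z^{-1}e^{2i(x-y)\zeta}$, and the resulting bound by $\|q-1\|_{L^2}+\|q'\|_{L^{2,1}}$ matches.

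There is, however, a genuine gap in your handling of $zH_2^+$. You claim it "reduces to knowing $z(m^{(+)}-B^+)\in L^2_x(\mathbb R^+;L^2_z(|z|>5/4))$" and attribute this to the proof of Proposition \ref{prop:r-low}. That statement is false and is not what the proposition provides: for each fixed $x$ one has $z\bigl(m^{(+)}(x;z)-B^+(z)\bigr)\to \bigl(\begin{smallmatrix} i\int_x^\infty(1-|q|^2)& \bar q-1\\ q-1 & -i\int_x^\infty(1-|q|^2)\end{smallmatrix}\bigr)\neq 0$ as $z\to\infty$, so this quantity cannot lie in $L^2_z(|z|>5/4)$; Proposition \ref{prop:r-low} only yields $L^\infty_z$ control of $z(m^{(+)}_{11}-1)$ and $L^2_z$ control of $zm^{(+)}_{21}-q$ (note the subtraction). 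The genuinely $\mathcal O(z)$ entry of $z\,\partial_z K^+_Q$ (coming from $K_{21}^+$) multiplied against $m^{(+)}_{11}-1$ therefore cannot be absorbed this way. The paper's fix is to apply the same $y$-integration by parts inside $\hat h_2^+=\int z e^{2i(x-y)\zeta}(x-y)(q-1)(m^{(+)}_{11}-1)\,dy$; this transfers the derivative onto the product and produces the term $\partial_y m^{(+)}_{11}$, which is then controlled through the Lax equation $\partial_x m^{(+)}_{11}=-iz^{-1}m^{(+)}_{11}+i\bar q\, m^{(+)}_{21}$, everything else being closed with the \emph{unweighted} bound $m^{(+)}-B^+\in L^2_x(\mathbb R^+;L^2_z)$ already established in Lemma \ref{lm:r'}. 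Your argument needs this extra step; as written, the $zH_2^+$ estimate does not go through.
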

 \begin{proof}
  We make use of the symmetry $r(z^{-1})=\overline{r(z)}$.
	From the symmetry we have
	\begin{equation}
		\int^{3/4}_0|r'(z)|^2dz=\int_{4/3}^{+\infty}|r'(z)|^2z^2dz.
	\end{equation}
	Thus our goal is to show that $r'(z)\in L^{2,1}_z(4/3,+\infty)$.
    From \eqref{eq:S'(z)}, we only show that
	\begin{equation}
		z\dfrac {\partial}{\partial z}m^{(\pm)}(0;z) \in L^2_z(4/3,+\infty).
	\end{equation}	Indeed in \eqref{def:m} (see also \eqref{eq:K^+_QB^+}), we only need to show that
    $$zH_1^{+}(x,z),zH_2^{+}(x,z)\in L^{\infty}_{x}(\mathbb{R}^+;L^2_{z}(4/3,+\infty)).$$
    For $zH_1^{+}(x,z)$, we only consider one term from each of them since the proof of the rest are similar (cf.\eqref{def:m}). Let
\begin{equation}
\begin{aligned}
		zh_1^+(x,z)&=z\dfrac{\partial}{\partial z}\left(\int^{+\infty}_x\dfrac{iz^2}{z^2-1}e^{2i(x-y)\zeta(z)}(q-1)dy\right)\\
				  &=-\int^{+\infty}_x\dfrac{2iz^2}{(z^2-1)^2}e^{2i(x-y)\zeta(z)}(q-1)dy-\int^{+\infty}_x\dfrac{z^3+z}{z^2-1}e^{2i(x-y)\zeta(z)}(x-y)(q-1)dy\\
				  &=zh^+_{1,1}(x,z)+zh^+_{1,2}(x,z).
\end{aligned}
\end{equation}
 We only have to consider the following:
	\begin{equation}
		\begin{aligned}
		\hat{h}_1^+(x,z)&=\int^{+\infty}_xze^{2i(x-y)\zeta(z)}(x-y)(q-1)dy\\
                        &=i\int^{+\infty}_x\frac{\partial e^{2i(x-y)\zeta(z)}}{\partial y}(x-y)(q-1)dy+\int^{+\infty}_xz^{-1}e^{2i(x-y)\zeta(z)}(x-y)(q-1)dy\\
						&=-i\int^{+\infty}_xe^{2i(x-y)\zeta(z)}(x-y)q'dy+i\int^{+\infty}_xe^{2i(x-y)\zeta(z)}(q-1)dy\\
      &\quad +\int^{+\infty}_xz^{-1}e^{2i(x-y)\zeta(z)}(x-y)(q-1)dy\\
                        &=\hat{h}_{1,1}^+(x,z)+\hat{h}_{1,2}^+(x,z)+\hat{h}_{1,3}^+(x,z).
		\end{aligned}
	\end{equation}
	We only consider the first term. Setting $\gamma=z-\frac{1}{z}$, it is easy to deduce
\begin{equation}
	\begin{aligned}
		\left\|\hat{h}_{1,1}^+(x,\gamma)\right\|_{L^{\infty}_x(\mathbb{R}^+;L^2_{\gamma}(\mathbb{R}))}&=\sup_{\phi\in C^{\infty}_0, \|\phi\|_{L^{2}}=1 }\left|\int_{\mathbb R}\phi(\gamma)\left(\int^{+\infty}_xe^{i(x-y)\gamma}(x-y)q'dy\right)d\gamma\right|\\
	    &\leq C \sup_{\phi\in C^{\infty}_0, \|\phi\|_{L^{2}}=1}\int^{+\infty}_x\left|\mathcal{F}^{-1}\left[ \phi(\cdot)\right](x-y)\right||y||q'|dy\\
	    &\leq C \left(\int^{+\infty}_x|y|^2|q'|^2dy\right)^{\frac12}\\
		&\lesssim \|q'\|_{L^{2,1}(\mathbb{R})}.
	\end{aligned}
\end{equation}
For $zH_2^{+}(x,z)$, we  consider 
\begin{equation}
   zH_{21}^{+}(x,z)=z\left[\dfrac {\partial K^+_Q}{\partial z}\right]\left(m^{(+)}_1-\left(\begin{array}{c}
	1\\
	\frac{1}{z}
  \end{array}\right)\right).
\end{equation}
We only consider the following term since other terms contains $1/z$ factors:
\begin{equation}
	\begin{aligned}
	\hat{h}_2^+(x,z)=&\int_{+\infty}^xze^{2i(x-y)\zeta(z)}(x-y)(q-1)(m^{(+)}_{11}-1)dy\\
					=&i\int_{+\infty}^x\frac{\partial e^{2i(x-y)\zeta(z)}}{\partial y}(x-y)(q-1)(m^{(+)}_{11}-1)dy+\int_{+\infty}^xz^{-1}e^{2i(x-y)\zeta(z)}(x-y)(q-1)(m^{(+)}_{11}-1)dy\\
					=&-i\int_{+\infty}^xe^{2i(x-y)\zeta(z)}(x-y)q'(m^{(+)}_{11}-1)dy+i\int_{+\infty}^xe^{2i(x-y)\zeta(z)}(q-1)(m^{(+)}_{11}-1)dy\\
					 &-i\int_{+\infty}^xe^{2i(x-y)\zeta(z)}(x-y)(q-1)\left(\frac{\partial m^{(+)}_{11}}{\partial y}\right)dy+\int_{+\infty}^xz^{-1}e^{2i(x-y)\zeta(z)}(x-y)(q-1)(m^{(+)}_{11}-1)dy\\
					=&\hat{h}_{2,1}^+(x,z)+\hat{h}_{2,2}^+(x,z)+\hat{h}_{2,3}^+(x,z)+\hat{h}_{2,4}^+(x,z).
	\end{aligned}
\end{equation}
Following the proofs of Lemma \ref{lm:r} and Lemma \ref{lm:r'}, we can show that
\begin{equation}
	\begin{aligned}
		\left\|\hat{h}_{2,1}^+(x,z)\right\|_{L^{\infty}_x(\mathbb{R}^+;L^2_z(|z|>4/3))}&\leq\|q'\|_{L^{2,1}(\mathbb{R})}\|m^{(+)}-B^+\|_{L^2_x(\mathbb{R}^+;L^2_z(|z|>4/3))},\\
		\left\|\hat{h}_{2,2}^+(x,z)\right\|_{L^{\infty}_x(\mathbb{R}^+;L^2_z(|z|>4/3))}&\leq\|q-1\|_{L^2(\mathbb{R}^+)}\|m^{(+)}-B^+\|_{L^2_x(\mathbb{R}^+;L^2_z(|z|>4/3))},\\
		\left\|\hat{h}_{2,4}^+(x,z)\right\|_{L^{\infty}_x(\mathbb{R}^+;L^2_z(|z|>4/3))}&\leq\|q-1\|_{L^{2,1}(\mathbb{R}^+)}\|m^{(+)}-B^+\|_{L^2_x(\mathbb{R}^+;L^2_z(|z|>4/3))}.
    \end{aligned}
    \end{equation}
Notice that \eqref{eq:lax} and \eqref{jost; normal-1}, we can obtain that
\begin{equation}
    \frac{\partial}{\partial x}m^{(+)}_1(x;z)=\left(\begin{array}{cc}
       -iz^{-1}  &i\bar{q}  \\
        -iq & iz
    \end{array}\right)m^{(+)}_1(x;z)
\end{equation}
and 
\begin{equation}
    \frac{\partial}{\partial x}m^{(+)}_{11}(x;z)=-iz^{-1}m^{(+)}_{11}(x;z)+i\bar{q}(x)m^{(+)}_{21}(x;z).
\end{equation}
We thus deduce that $\hat{h}_{2,3}^+(x,z)\in L^{\infty}_x(\mathbb{R}^+;L^2_z(|z|>4/3))$. By standard \textit{Volterra} theory, we deduce the conclusion of the lemma.
\end{proof}
We have now completed the second step of Remark \ref{rmk: r}. We then turn to the last part of Remark \ref{rmk: r}.
\subsubsection{Near $\pm 1$}
\begin{lemma}
    \label{lm:r-1}
    If $q\in\mathcal{I}^{2,1}_1$, then $r(z)\in H^1$ for $z\in (1/2, 3/2)\cup (-3/2, -1/2)$.
\end{lemma}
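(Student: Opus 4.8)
The plan is to reduce the claim to a weighted $L^2$ bound on $\partial_z m^{(\pm)}(0;z)$ near $z=\pm1$, read off from the \textit{Volterra} equation \eqref{Volterra; origin} after exhibiting the (removable) singularity at $z=\pm1$ explicitly and running a \textit{Plancherel} argument in a rescaled phase variable, in the same spirit as Lemmas \ref{lm:r}--\ref{lm:r'}.

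First I would rewrite $r$ with no spurious pole. Set $\alpha(z):=\det[m^{(-)}_1(0;z),m^{(+)}_2(0;z)]=(1-z^{-2})a(z)$ and $\beta(z):=\det[m^{(+)}_1(0;z),m^{(-)}_1(0;z)]=(1-z^{-2})b(z)$, so that $r=\beta/\alpha$ on each of $U_+:=(1/2,3/2)$ and $U_-:=(-3/2,-1/2)$. By Lemma \ref{lem:continuous} (applicable since $q\in\mathcal{I}^{2,1}_1$ gives $q-\tanh\in L^{2,2}\subset L^{1,1}$) the maps $m^{(\pm)}(0;\cdot)$, hence $\alpha$ and $\beta$, are continuous on $\overline{U_\pm}$; since $a$ has no real zeros off $\{0,\pm1\}$ and $\alpha(\pm1)\ne0$ by the generic condition \eqref{cond: gen}, one gets $|\alpha|\ge c>0$ on $\overline{U_\pm}$. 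Thus $r'=(\beta'\alpha-\beta\alpha')/\alpha^2$ on $U_\pm$, and it suffices to prove $\partial_z m^{(\pm)}(0;z)\in L^2_z(U_\pm)$; I will do this for $m^{(+)}$ with $x\ge0$, the case of $m^{(-)}$ with $x\le0$ being entirely analogous.

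Then I would expose the removable singularity and reduce to an oscillatory integral. With $\gamma(z):=2\zeta(z)=z-z^{-1}=(z^2-1)/z$ and the entire function $\phi(w):=\int_0^1 e^{iwt}\,dt=(e^{iw}-1)/(iw)$ there holds the identity
\begin{equation*}
\frac{e^{2i(x-y)\zeta(z)}-1}{z^2-1}=\frac{i(x-y)}{z}\,\phi\big(\gamma(z)(x-y)\big),
\end{equation*}
and $z\mapsto\gamma(z)$ is a diffeomorphism of $U_\pm$ onto a bounded neighbourhood of $0$ (with $\gamma(\pm1)=0$) with derivative $1+z^{-2}$ bounded above and below. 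Substituting this identity into the column-wise kernels \eqref{eq:KQ-1}--\eqref{eq:KQ-2} and into \eqref{eq:K^+_QB^+}, every entry of $K^+_QB^+$ and of the kernel of $K^+_Q$ becomes, after collecting the $1/(z^2-1)$ terms, a finite sum of terms $P(z,z^{-1})\int_x^{+\infty}f(y)\,(x-y)^k\,\phi(\gamma(z)(x-y))\,dy$ with $P$ a polynomial, $f\in\{q-1,\bar q-1\}$ and $k\in\{0,1\}$, together with non-oscillatory pieces $z^{-1}\int_x^{+\infty}(q+\bar q-2)\,dy$; all are $C^1$ on $U_\pm$. Applying $\partial_z$ (and $\partial_z[\gamma(z)(x-y)]=(1+z^{-2})(x-y)$) raises $k$ by at most one and turns $\phi$ into $\phi'$. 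Writing $(1-K^+_Q)\,\partial_z(m^{(+)}-B^+)=\partial_z(K^+_QB^+)+(\partial_z K^+_Q)(m^{(+)}-B^+)$ and invoking the standard \textit{Volterra} bound for $(1-K^+_Q)^{-1}$ (available on $L^\infty_x(\mathbb{R}^+;L^2_z(U_\pm))$ because, after the regrouping, the kernel of $K^+_Q$ is bounded on $U_\pm$ with at most a linear factor $|x-y|$), the whole problem reduces to estimating, for $0\le k\le2$ and $j\in\{0,1\}$,
\begin{equation*}
F_{k,j}(x,z)=\int_x^{+\infty}f(y)\,(x-y)^k\,\phi^{(j)}\big(\gamma(z)(x-y)\big)\,dy,\qquad f\in\{q-1,\bar q-1\},
\end{equation*}
in $L^\infty_x(\mathbb{R}^+;L^2_z(U_\pm))$; the $(\partial_z K^+_Q)(m^{(+)}-B^+)$ term is then handled crudely ($|\phi^{(j)}|\le1$, Cauchy--Schwarz in $y$), once it is known that $m^{(+)}-B^+\in L^2_x(\mathbb{R}^+;L^2_z(U_\pm))$, which follows from the $L^2_x$-version of the $k\le1$ estimate.

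The last and most delicate step is the oscillatory estimate. Writing $\phi^{(j)}(w)=\int_0^1(it)^je^{iwt}\,dt$ gives $F_{k,j}(x,z)=\int_0^1(it)^jG_k(x,\gamma(z)t)\,dt$ with $G_k(x,\eta):=\int_x^{+\infty}f(y)(x-y)^ke^{i\eta(x-y)}\,dy$; \textit{Plancherel} yields $\|G_k(x,\cdot)\|_{L^2_\eta(\mathbb{R})}=\sqrt{2\pi}\,\|f(\cdot)(x-\cdot)^k\|_{L^2(x,\infty)}\le\sqrt{2\pi}\,\|f\|_{L^{2,k}(\mathbb{R}^+)}$ uniformly in $x\ge0$, and the change of variables $\eta=\gamma(z)t$ (after which $\eta$ stays in a fixed bounded set, which we enlarge to $\mathbb{R}$) together with Minkowski's integral inequality gives $\|F_{k,j}(x,\cdot)\|_{L^2_z(U_\pm)}\lesssim\big(\int_0^1 t^{j-1/2}\,dt\big)\|f\|_{L^{2,k}(\mathbb{R}^+)}<\infty$ since $j\ge0$; the $L^2_x$-version uses instead $\|G_k\|_{L^2_x(\mathbb{R}^+)L^2_\eta(\mathbb{R})}^2\lesssim\int_0^\infty|f(y)|^2y^{2k+1}\,dy\lesssim\|f\|^2_{L^{2,k+1}(\mathbb{R}^+)}$. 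As $k\le2$ throughout, all these norms are controlled by $\|q-\tanh\|_{L^{2,2}}$ (using $\tanh(\cdot)\mp1\in L^{2,m}(\mathbb{R}^\pm)$ for every $m$), precisely the decay available from $q\in\mathcal{I}^{2,1}_1$. The main obstacle is exactly this point: at $z=\pm1$ the phase degenerates, so individual integrals such as $\int_x^\infty(q-1)(x-y)^2\,dy$ need not even converge and a pointwise estimate near $z=\pm1$ only produces a non-$L^2$ blow-up of order $|\gamma(z)|^{-1}$; the argument goes through solely because the \textit{Plancherel} identity in $\eta=\gamma(z)t$ replaces that pointwise bound, and its $t^{-1/2}$ rescaling loss is just barely integrable at $t=0$.
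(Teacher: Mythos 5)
Your proposal is correct and shares the paper's overall skeleton (reduce to $\partial_z m^{(\pm)}(0;z)\in L^2_z$ near $\pm 1$ via the \textit{Volterra} equation, split into $\partial_z(K_Q^\pm B^\pm)$, $[\partial_zK_Q^\pm](m^{(\pm)}-B^\pm)$ and the resolvent term, then estimate oscillatory integrals by \textit{Plancherel} in $\gamma=z-z^{-1}$), but the mechanism you use to neutralize the $(z^2-1)^{-2}$ singularity is genuinely different. The paper integrates by parts in $y$, writing $(q-1)\,dy=d\!\int_{+\infty}^y(q-1)\,ds$, which cancels the singular coefficients at the cost of replacing $q-1$ by its primitive (see \eqref{hbreve-1}--\eqref{htilde-2} and the estimate \eqref{est: hbreve-1}, which then needs a \textit{Hardy}-type scaling computation and $\|q-1\|_{L^{2,2}(\mathbb{R}^+)}$); for the $H_2^+$ term it instead uses the pointwise Taylor bounds $|e^{iw}-1|\le|w|$, $|e^{iw}-1-iw|\le w^2/2$. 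You replace both devices by the single identity $\frac{e^{i\gamma(x-y)}-1}{z^2-1}=\frac{i(x-y)}{z}\phi(\gamma(x-y))$ with $\phi(w)=\int_0^1e^{iwt}dt$, which exhibits the cancellation algebraically and reduces everything to the family $F_{k,j}$; the price is the $t$-averaged \textit{Plancherel} bound with its $t^{-1/2}$ Jacobian loss, which is integrable. Both routes end up requiring exactly $q-1\in L^{2,2}(\mathbb{R}^+)$ (your $k\le 2$ versus the paper's primitive plus one weight), so the hypotheses match; your version is arguably more systematic in that $H_1^+$ and $H_2^+$ are treated by one and the same estimate, while the paper's integration by parts avoids the auxiliary $t$-integral. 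The only detail you pass over lightly is the non-oscillatory contribution $z^{-1}\int_x^{+\infty}(\bar q-1)\,dy$ in the $L^2_x$-version (needed for $m^{(+)}-B^+\in L^2_x L^2_z$), which requires the \textit{Hardy} inequality exactly as in the paper's Lemma \ref{lm:r'}; since you defer to that lemma, this is a presentational rather than a mathematical gap.
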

\begin{proof}
 For brevity we only show that $r'(z)\in L^2_z(1/2, 3/2)$. As usual we only need to prove that $\dfrac {\partial}{\partial z}m^{(\pm)}(0;z) \in L^2_z(1/2, 3/2)$.
 
By \eqref{def:m} , we need to prove that 
	$H_1^{+}(x,z),H_2^{+}(x,z)\in L^{\infty}_{x}(\mathbb{R}^+;L^2_{z}(1/2, 3/2))$. For matrix-valued $H_1^+(x,z)$ we treat the entries in the first row since the second row follows from symmetry.
	\begin{equation}
 \label{hbreve-1}
		\begin{aligned}
    \breve{h}^+_1(x,z)&=\int^x_{+\infty}(q-1)\left(-i\frac{z^2+1}{(z^2-1)^2}+i\frac{z^2+1}{(z^2-1)^2}e^{2i(x-y)\zeta(z)}+\frac{(z^2+1)(x-y)}{(z^3-z)}e^{2i(x-y)\zeta(z)}\right)dy\\
	&=\int^x_{+\infty}\left(-i\frac{z^2+1}{(z^2-1)^2}+i\frac{z^2+1}{(z^2-1)^2}e^{2i(x-y)\zeta(z)}+\frac{(z^2+1)(x-y)}{(z^3-z)}e^{2i(x-y)\zeta(z)}\right)d\int^y_{+\infty}(q(s)-1)ds\\
	&=i\frac{z^2+1}{z^2}\int^x_{+\infty}(x-y)e^{2i(x-y)\zeta(z)}\left(\int^y_{+\infty}(q(s)-1)ds\right)dy.
	\end{aligned}
   \end{equation}
   \begin{equation}
   \label{htilde-1}
	\begin{aligned}
    \widetilde{h}^+_1(x,z)&=\int^x_{+\infty}(\bar{q}-1)\left(-i\frac{z^2+1}{(z^2-1)^2}+i\frac{3z^2-1}{z^2(z^2-1)^2}e^{2i(x-y)\zeta(z)}+\frac{(1+z^{-2})(x-y)}{z(z^2-1)}e^{2i(x-y)\zeta(z)}\right)dy\\
	&=\int^x_{+\infty}\left(-i\frac{z^2+1}{(z^2-1)^2}+i\frac{3z^2-1}{z^2(z^2-1)^2}e^{2i(x-y)\zeta(z)}+\frac{(z^{-2}+1)(x-y)}{z^3-z}e^{2i(x-y)\zeta(z)}\right)d\int^y_{+\infty}(\overline{q}(s)-1)ds\\
	&=-\frac{i}{z^2}\int^x_{+\infty}(\bar{q}(y)-1)dy-\int^x_{+\infty}\left(\frac{2}{z^3}-i\frac{z^2+1}{z^4}(x-y)\right)e^{2i(x-y)\zeta(z)}\left(\int^y_{+\infty}(\bar{q}(s)-1)ds\right)dy.
		\end{aligned}
    \end{equation}
    \begin{equation}
     \label{hbreve-2}
		\begin{aligned}
   \breve{h}^+_2(x,z)&=\int^x_{+\infty}(q-1)\left(i\frac{2z}{(z^2-1)^2}-i\frac{2z}{(z^2-1)^2}e^{2i(y-x)\zeta(z)}-\frac{(1+z^{-2})(y-x)}{z^2-1}e^{2i(y-x)\zeta(z)}\right)dy\\
	&=\int^x_{+\infty}\left(i\frac{2z}{(z^2-1)^2}-i\frac{2z}{(z^2-1)^2}e^{2i(y-x)\zeta(z)}-\frac{(1+z^{-2})(y-x)}{z^2-1}e^{2i(y-x)\zeta(z)}\right)d\int^y_{+\infty}(q(s)-1)ds\\
	&=-\int^x_{+\infty}\left(\frac{1}{z^2}-i\frac{z^2+1}{z^3}(y-x)\right)e^{2i(y-x)\zeta(z)})\left(\int^y_{+\infty}(q(s)-1)ds\right)dy.
	\end{aligned}
    \end{equation}
    \begin{equation}
    \label{htilde-2}
		\begin{aligned}
    \widetilde{h}^+_2(x,z)&=\int^x_{+\infty}(\bar{q}-1)\left(i\frac{2z}{(z^2-1)^2}-i\frac{2z}{(z^2-1)^2}e^{2i(y-x)\zeta(z)}-\frac{(z^{2}+1)(y-x)}{z^2-1}e^{2i(y-x)\zeta(z)}\right)dy\\
	&=\int^x_{+\infty}\left(i\frac{2z}{(z^2-1)^2}-i\frac{2z}{(z^2-1)^2}e^{2i(y-x)\zeta(z)}-\frac{(z^{2}+1)(y-x)}{z^2-1}e^{2i(y-x)\zeta(z)}\right)d\int^y_{+\infty}(\bar{q}(s)-1)ds\\
	&=\int^x_{+\infty}\left(1+i\frac{z^2+1}{z}(y-x)\right)e^{2i(y-x)\zeta(z)}\left(\int^y_{+\infty}(\bar{q}(s)-1)ds\right)dy.
	\end{aligned}
    \end{equation}
	Indeed, we only need to show that
		\begin{equation}
  \label{est: hbreve-1}
		\breve{h}^+_{1,1}(x,z)=\int^{+\infty}_x(x-y)e^{2i(x-y)\zeta(z)}\left(\int^{+\infty}_y(q(s)-1)ds\right)dy\in L^{\infty}_{x}(\mathbb{R}^+;L^2_{z}(\mathbb{R}^+)).
		\end{equation}
	Setting $\gamma=z-\frac1z$
	\begin{equation}
		\begin{aligned}
		\|\breve{h}^+_{1,1}(x,\gamma)\|_{ L^{\infty}_{x}(\mathbb{R}^+,L^2_{\gamma}(\mathbb{R}))}&=\sup_{\phi\in C^{\infty}_0, \|\phi\|_{L^{2}}=1 }\left|\int_{\mathbb R}\phi(\gamma)\left(\int^{+\infty}_x(x-y)e^{i\gamma(x-y)}\left(\int^{+\infty}_y(q(s)-1)ds\right)dy\right)d\gamma\right|\\
	  &\leq C \sup_{\phi\in C^{\infty}_0, \|\phi\|_{L^{2}}=1}\int^{+\infty}_x\left|\mathcal{F}^{-1}\left[ \phi(\cdot)\right](x-y)\right|\left|\int^{+\infty}_yy(q(s)-1)ds\right|dy\\
	    &\leq C \left(\int_x^{+\infty}\left|\int_y^{+\infty}y(q(s)-1)ds\right|^2dy\right)^{\frac12}\\
		&=    C \left(\int_x^{+\infty}\left|\int_1^{+\infty}y^2(q(sy)-1)ds\right|^2dy\right)^{\frac12}\\
		&\leq C \int_1^{+\infty}\left(\int_x^{+\infty}y^4|q(sy)-1|^2dy\right)^{\frac12}ds\\
		&=    C \int_1^{+\infty}s^{-\frac52}\left(\int_x^{+\infty}(sy)^4|q(sy)-1|^2dsy\right)^{\frac12}ds\\
		&\lesssim\|q-1\|_{L^{2,2}(\mathbb{R}^+)}.
		\end{aligned}
	\end{equation}
We then turn to $H^+_2(x;z)$ . We first recall the following two inequalities: $\forall x\in\mathbb{R}$
	\begin{align}
		|e^{ix}-1|&\leq|x|,\\
		|e^{ix}-1-ix|&\leq\frac{x^2}{2} .
	\end{align}
	We note that
	\begin{equation}
 \label{eq: KZ}
		\begin{aligned}
		\left[\dfrac {\partial K^+_Q}{\partial z}\right]\left(m^{(+)}_1-\left(\begin{array}{c}
				1\\
				z^{-1}
		\end{array}\right)\right)=\int_{+\infty}^x\left(\begin{array}{cc}
			 K_{11}^+ & K_{12}^+\\
			 K_{21}^+ & K_{22}^+
		\end{array}\right)\left(m^{(+)}_1-\left(\begin{array}{c}
			1\\
			z^{-1}
	\end{array}\right)\right)dy.
        \end{aligned}
	\end{equation}
	We know that
	\begin{equation}
		\begin{aligned}
		|K_{11}^+|&=\left|\frac{z^2+1}{(z^2-1)^2}-\frac{z^2+1}{(z^2-1)^2}e^{2i(x-y)\zeta(z)}+\frac{i(z^2+1)(x-y)}{(z^3-z)}e^{2i(x-y)\zeta(z)}\right|\left|q-1\right|\\
                           &=\frac{z^2+1}{(z^2-1)^2}\left|e^{-2i(x-y)\zeta(z)}-1+2i(x-y)\zeta(z)\right|\left|q-1\right|\\
						   &\leq\frac{z^2+1}{2z^2}(x-y)^2\left|q-1\right|.
        \end{aligned}
	\end{equation}
 and\\
 \begin{equation}
		\begin{aligned}
		|K_{12}^+|&=\left|-\frac{2z}{(z^2-1)^2}+\frac{2z}{(z^2-1)^2}e^{2i(x-y)\zeta(z)}-\frac{i(z^2+1)(x-y)}{(z^2-1)z^2}e^{2i(x-y)\zeta(z)}\right|\left|\bar{q}-1\right|\\
                           &=\left|-\frac{2z}{(z^2-1)^2}e^{2i(x-y)\zeta(z)}\left(e^{-2i(x-y)\zeta(z)}-1+2i(x-y)\zeta(z)\right)+\frac{i(x-y)}{z^2}e^{2i(x-y)\zeta(z)}\right|\left|\bar{q}-1\right|\\
						   &\leq\frac{(x-y)^2}{|z|}\left|\bar{q}-1\right|+\frac{|x-y|}{z^2}\left|\bar{q}-1\right|.
        \end{aligned}
	\end{equation}
So in order to prove that $H_2^{+}(x,z)\in L^{\infty}_{x}(\mathbb{R}^+;L^2_{z}(1/2,3/2))$, we only need to prove that  $m^{(+)}-B^+\in L^2_x(\mathbb{R}^+;L^2_z(1/2,3/2))$ which follows similarly from the proof of Lemma \ref{lm:r'}.
So we show that $r\in H^{1}$.
\end{proof}
The following lemma is necessary in the application of the nonlinear steepest descent:
\begin{lemma}
   If $q\in\mathcal{I}^{3,1}_2$, then $r\in H^1$ and $r''(z)\in L^2\left((1
/2,3/2)\cup(-3/2,-1
/2)\right)$.
  \end{lemma}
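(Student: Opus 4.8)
The plan is to mirror the argument of Lemmas \ref{lm:r'}, \ref{lm:r'-s} and \ref{lm:r-1}, carrying one additional $z$-derivative through the Volterra analysis and closing again with Plancherel's identity. First, I would note the inclusion $\mathcal{I}^{3,1}_2\hookrightarrow\mathcal{I}^{2,1}_1$: one has $L^{2,3}\subset L^{2,2}$ trivially, while a weighted Gagliardo--Nirenberg inequality bounds $\|q'\|_{L^{2,2}}$ (a fortiori $\|q'\|_{L^{2,1}}$) in terms of $\|q-\tanh\|_{L^{2,3}}$ and $\|q''\|_{L^{2,1}}$, since $(\tanh)'$ and $(\tanh)''$ are Schwartz. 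Hence $r\in H^1(\mathbb{R})$ is already contained in Lemmas \ref{lm:r}, \ref{lm:r'}, \ref{lm:r-s}, \ref{lm:r'-s} and \ref{lm:r-1}, and it remains only to show $r''\in L^2$ on a neighbourhood of $z=\pm1$; by the symmetry $\overline{r(z^{-1})}=r(z)$ and the substitution $z\mapsto 1/z$ it suffices to treat $z\in(1/2,3/2)$.

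Next I would differentiate the scattering relation \eqref{eq:S(z)} twice at $x=0$,
\begin{equation}
\partial_z^2S(z)=\sum_{j=0}^{2}\binom{2}{j}\left(\partial_z^{\,2-j}m^{(+)}(0;z)^{-1}\right)\left(\partial_z^{\,j}m^{(-)}(0;z)\right).
\end{equation}
Both $m^{(\pm)}$ and $\partial_z m^{(\pm)}$ are bounded in $L^\infty_x(\mathbb{R}^\pm;L^\infty_z(1/2,3/2))$ — the former by standard Volterra theory, the latter by Lemma \ref{lem:continuous}, since $q-\tanh\in L^{2,3}\subset L^{1,2}$ — so the cross term in the sum above is harmless and the problem reduces to $\partial_z^2m^{(\pm)}(0;z)\in L^2_z(1/2,3/2)$; by symmetry I treat $m^{(+)}$ with $x\ge0$. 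Differentiating the Volterra equation \eqref{Volterra; origin} twice (the $\partial_z^2 B^+$ contributions are $O(z^{-3})$, hence bounded on $(1/2,3/2)$) yields
\begin{equation}
\partial_z^2\!\left(m^{(+)}-B^+\right)=\partial_z^2\!\left(K^+_QB^+\right)+2\left(\partial_zK^+_Q\right)\!\partial_z\!\left(m^{(+)}-B^+\right)+\left(\partial_z^2K^+_Q\right)\!\left(m^{(+)}-B^+\right)+K^+_Q\,\partial_z^2\!\left(m^{(+)}-B^+\right),
\end{equation}
so by the Volterra resolvent bound it is enough to place the first three terms in $L^\infty_x(\mathbb{R}^+;L^2_z(1/2,3/2))$. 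For the two terms carrying $\partial_z^jK^+_Q$ ($j=1,2$) I would invoke the pointwise bounds $\bigl|\partial_z^jK^+_Q\bigr|\lesssim\langle x-y\rangle^{\,j+1}\bigl(|q-1|+|\bar q-1|\bigr)$ on $(1/2,3/2)$ — which follow from the removability of the singularity of $K^+_Q$ at $z=\pm1$ (equivalently $\zeta(\pm1)=0$ and $\tfrac{z}{z^2-1}\zeta(z)\equiv\tfrac12$) together with the elementary inequalities $\bigl|e^{i\theta}-\sum_{\ell<n}\tfrac{(i\theta)^\ell}{\ell!}\bigr|\le\tfrac{|\theta|^n}{n!}$, exactly as for $K_{11}^+,K_{12}^+$ in Lemma \ref{lm:r-1} — and combine them with $m^{(+)}-B^+,\ \partial_z(m^{(+)}-B^+)\in L^2_x(\mathbb{R}^+;L^2_z(1/2,3/2))$ (from the proof of Lemma \ref{lm:r-1}) and $q-\tanh\in L^{2,3}$ via Cauchy--Schwarz. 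This reduces everything to the single estimate $\partial_z^2(K^+_QB^+)\in L^\infty_x(\mathbb{R}^+;L^2_z(1/2,3/2))$.

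For that estimate I would differentiate the entries of \eqref{eq:K^+_QB^+} twice in $z$; each resulting term is a finite sum of expressions $R(z)\,(x-y)^\ell e^{\pm2i(x-y)\zeta(z)}g(y)$ with $g\in\{q-1,\ \bar q-1,\ \widetilde{q}\}$, $\ell\le2$, and $R$ rational with its only pole on $(1/2,3/2)$ at $z=1$, of order at most three. As in the derivation of \eqref{hbreve-1}--\eqref{htilde-2}, these terms occur in groups for which writing $g(y)\,dy=d\!\int^y_{+\infty}g$ and integrating by parts forces the poles to cancel (a further integration by parts moves one $\partial_y$ onto $g$ and is where $q''\in L^{2,1}$ enters), leaving a finite sum of terms of the shape $\widetilde{R}(z)\,(x-y)^{\ell'}e^{\pm2i(x-y)\zeta(z)}\bigl(\int^y_{+\infty}g\bigr)$ with $\widetilde{R}$ bounded on $(1/2,3/2)$ and $\ell'\le2$. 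Each of these is estimated, as in Lemmas \ref{lm:r'} and \ref{lm:r-1}, by the substitution $\gamma=z-z^{-1}$ and Plancherel,
\begin{equation}
\left\Vert\int^{+\infty}_x(x-y)^{\ell'}e^{i\gamma(x-y)}\Bigl(\int^{+\infty}_yg(s)\,ds\Bigr)dy\right\Vert_{L^2_\gamma(\mathbb{R})}\lesssim\left(\int^{+\infty}_x|x-y|^{2\ell'}\Bigl|\int^{+\infty}_yg(s)\,ds\Bigr|^2dy\right)^{1/2},
\end{equation}
and the right side is bounded uniformly in $x\ge0$ by $\|g\|_{L^{2,\ell'+1}(\mathbb{R}^+)}\le\|q-\tanh\|_{L^{2,3}(\mathbb{R}^+)}$ through the scaling--Minkowski computation already used for \eqref{est: hbreve-1}, the $q''$-terms being controlled by $\|q''\|_{L^{2,1}}$ in the same manner. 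Together with the symmetric estimate for $m^{(-)}$ this gives $r''\in L^2(1/2,3/2)$, and $(-3/2,-1/2)$ is identical.

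The hard part will be purely combinatorial--computational: after two $z$-derivatives the kernel $K^+_QB^+$ produces a fair number of rational prefactors, and one must verify that they genuinely recombine — upon one integration by parts — into functions bounded near $z=\pm1$, and keep precise track of how many $(x-y)$-weights and antiderivatives of $g$ are generated so that the moments $q-\tanh\in L^{2,3}$ and $q''\in L^{2,1}$ really suffice. This is the exact analogue of the cancellation already exhibited in \eqref{hbreve-1}--\eqref{htilde-2}, but with poles of order three and up to two $(x-y)$-weights, so the bookkeeping is heavier; once it is organised, the remaining steps are just the Fourier/Plancherel machinery of the preceding lemmas.
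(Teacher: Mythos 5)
Your proposal follows essentially the same route as the paper: differentiate the Volterra equation \eqref{Volterra; origin} twice to obtain the four-term decomposition, bound the terms involving $\partial_z K^+_Q$ and $\partial_z^2 K^+_Q$ by pointwise kernel estimates (exploiting the cancellation of the $(z^2-1)^{-k}$ singularities via Taylor remainders of the exponential) together with Cauchy--Schwarz against the $L^2_xL^2_z$ bounds on $m^{(+)}-B^+$ and its $z$-derivative, and control $\partial_z^2(K^+_QB^+)$ by integration by parts plus the $\gamma=z-z^{-1}$ substitution and Plancherel, with $q-\tanh\in L^{2,3}$ and $q''\in L^{2,1}$ supplying the needed moments. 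The only additions beyond the paper's argument are the explicit embedding $\mathcal{I}^{3,1}_2\hookrightarrow\mathcal{I}^{2,1}_1$ for the $r\in H^1$ claim and the more careful bookkeeping of the pole cancellations, both of which the paper leaves implicit.
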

  \begin{proof}
      
Direct differentiation gives
  	\begin{equation}
			\begin{aligned}
				\dfrac {\partial^2}{\partial z^2}(m^{(\pm)}-B^{\pm})&=\dfrac {\partial^2}{\partial z^2}(K^{\pm}_QB^{\pm})+\left[\dfrac {\partial^2 K^{\pm}_Q}{\partial z^2}\right](m^{(\pm)}-B^{\pm})+ 2\left[\dfrac {\partial K^{\pm}_Q}{\partial z}\right]\dfrac {\partial }{\partial z}\left(m^{(\pm)}-B^{\pm}\right) \\&+K^{\pm}_Q\dfrac {\partial^2 }{\partial z^2}\left(m^{(\pm)}-B^{\pm}\right).
			\end{aligned}
		\end{equation}
 For the first term, we notice that in \eqref{hbreve-1}-\eqref{htilde-2}, there is no singularities at $\pm 1$. Thus direct computation and arguments similar to \eqref{est: hbreve-1}, we have that 
\begin{equation}
    \norm{\dfrac {\partial^2}{\partial z^2}(K^{+}_QB^{+})}{L^2(z\in (1/2, 3/2))}\lesssim \norm{q-\tanh(x)}{L^{2,3}}.
\end{equation}
For the second term,  from \eqref{eq: KZ}  we deduce that
\begin{align*}
		\left[\dfrac {\partial^2 K^+_Q}{\partial z^2}\right]\left(m^{(+)}_1-\left(\begin{array}{c}
				1\\
				z^{-1}
		\end{array}\right)\right)=\int_{+\infty}^x\left(\begin{array}{cc}
			 \partial K_{11}^+/\partial z &\partial K_{12}^+/\partial z\\
			\partial K_{21}^+/\partial z & \partial K_{22}^+/\partial z
		\end{array}\right)\left(m^{(+)}_1-\left(\begin{array}{c}
			1\\
			z^{-1}
	\end{array}\right)\right)dy
        \end{align*}
and direct calculation leads to 
\begin{align*}
    \frac{\partial K_{11}^+}{\partial z}&=i(q-1)\left[\frac{-2z^3-6z}{(z^2-1)^3}e^{2i\zeta(z)(x-y)}\left(-1+ e^{-2i\zeta(z)(x-y)} +2i\zeta(z)(x-y)+\frac{(z^2-1)^2}{2z^2}(x-y)^2\right)\right.\\
    &\left. \quad +\frac{1}{z^3}(x-y^2)e^{2i\zeta(z)(x-y)}\right].
\end{align*}
Thus, 
\begin{equation}
    \left\vert  \frac{\partial K_{11}^+}{\partial z}\right\vert \leq |q-1|\left(\frac{|x-y|^2}{z^3}+\frac{z^2+3}{3z^2}|x-y|^3\right)
\end{equation}
and the remaining terms $ \frac{\partial K_{12}^+}{\partial z}$, $ \frac{\partial K_{21}^+}{\partial z}$, $ \frac{\partial K_{22}^+}{\partial z}$  will be estimated similarly. Thus as a consequence of the \textit{Cauchy-Schwarz }inequality:
\begin{align*}
    \left\Vert \left[\dfrac {\partial^2 K^{\pm}_Q}{\partial z^2}\right](m^{(\pm)}-B^{\pm}) \right\Vert_{L^\infty_z} &\leq  \left(\int_{+\infty}^x \left\vert \left(\begin{array}{cc}
			 \partial K_{11}^+/\partial z &\partial K_{12}^+/\partial z\\
			\partial K_{21}^+/\partial z & \partial K_{22}^+/\partial z
		\end{array}\right)\right\vert^2 dy \right)^{1/2}\\
   &\quad \times \norm{\left(m^{(+)}_1-\left(\begin{array}{c}
			1\\
			z^{-1}
	\end{array}\right)\right)}{L^2_{(x>0)}}\\
  &\lesssim \norm{q-1}{L^{2.3}_{(x>0)}}.
\end{align*}
We can similarly bound 
\begin{equation}
    \norm{\left[\dfrac {\partial K^{\pm}_Q}{\partial z}\right]\dfrac {\partial }{\partial z}\left(m^{(\pm)}-B^{\pm}\right)}{L^\infty_{z}}\lesssim\norm{q-1}{L^{2.3}_{(x>0)}}.
\end{equation}
By Lemma \ref{lm:r-1}, the proof of this lemma follows.
\end{proof}
We have concluded the proof of Proposition \ref{prop:r}.
We end the section with the following remark which will be useful when we solve the inverse scattering problem:
 \begin{remark}
    \label{remark:rlow}
       In the proof of Lemma \ref{lm:r} and Lemma \ref{lm:r-s}, we actually only need $q_0\in L^2$. Indeed if we assume $q_0\in \mathcal{I}^{2,0}_0=L^{2,2}$ which is a subset of $L^{1,1}$, then by Lemma \ref{lem:continuous}, we can show that $r(z)\in \mathcal{C}(\bbR)$. Moreover, from \eqref{eq:K^+_QB^+} it is easy to deduce from \textit{Riemann-Lebsgue} lemma that $\lim_{z\to \pm \infty}r(z)=0$. Also, from \eqref{eq:K^+_QB^+} and the fact that $\overline{r(z^{-1})}=r(z)$ we can deduce
       \begin{equation}
       \label{cor:sing}
 \lim_{z\to 0 }\left\vert\frac{r(z)}{z}\right\vert=\lim_{z\to \pm \infty} \left\vert z\overline{r(z)}\right\vert<+\infty.
       \end{equation}
    \end{remark}
\subsection{The inverse scattering}
 Standard inverse scattering theory implies that $r(z)$ and $c_k$ have linear time evolution:
\begin{equation}
    r(z,t)=r(z)e^{-it(z^2-z^{-2})},\quad c_k(t)=c_k e^{-it(z^2-z^{-2})}.
\end{equation}
\begin{definition}
\label{df:sd}
We define two sets of \textit{scattering data}:
\begin{align}
    \mathcal{S}_1&=\left\{r(z),\left\{z_k, c_k\right\}_{k=1}^{N}\right\} \in \mathcal{L}_0^{2}(\mathbb{R}) \times \mathbb{C}^{2 N}, \\
    \mathcal{S}_2&=\left\{r(z),\left\{z_k, c_k\right\}_{k=1}^{N}\right\} \in \breve{H}_0^{1,1}(\mathbb{R}) \times \mathbb{C}^{2 N} 
\end{align}
where
\begin{align*}
    \mathcal{L}_0^{2}(\mathbb{R})&=L^2(\bbR)\cap \mathcal{C}(\bbR)\cap\lbrace r(z): |\lim_{z\to 0}r(z)/z|<+\infty \rbrace\cap\lbrace r(z): \lim_{z\to 0}r(z)=0 \rbrace,\\
    \breve{H}^{1,1}_0(\bbR)&=H^{1,1}(\bbR)\cap \lbrace r(z):  |\lim_{z\to 0}r(z)/z|<+\infty \rbrace\cap \lbrace r''(z)\in L^2({(1/2, 3/2)\cup (-3/2, -1/2)})\rbrace.
\end{align*}
and 
\begin{equation}
    |z_k|=1, \quad 0<\arg z_1<\arg z_2<...<\arg z_N<\pi.
\end{equation}
In $\mathcal{S}_2$, we also assume that $z_k\neq i$ for $k=1,2,...N$.
\end{definition}
We characterize the direct scattering map as follows:
\begin{proposition}
\label{prop:L2,2}
    Given $q_0\in \mathcal{I}^{2,0}_0$ and $q_0\in \mathcal{I}^{3,1}_1$, and the generic condition \eqref{cond: gen}, then there are two \textit{Lipschitz} continuous maps:
    \begin{align}
       \mathcal{I}^{2,0}_0 \ni q_0 \mapsto \mathcal{S}_1,\\
       \nonumber
       \mathcal{I}^{3,1}_1 \ni q_0 \mapsto \mathcal{S}_2.
    \end{align}
\end{proposition}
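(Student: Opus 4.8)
The plan is to establish the two Lipschitz maps separately, building on the quantitative bounds on the reflection coefficient obtained in Proposition \ref{prop:r} and Remark \ref{remark:rlow}, together with the continuity statements of Lemma \ref{lem:continuous}. For the first map $\mathcal{I}^{2,0}_0 \ni q_0 \mapsto \mathcal{S}_1$, recall from Remark \ref{remark:rlow} that $\mathcal{I}^{2,0}_0 = L^{2,2} \subset L^{1,1}$, so Lemma \ref{lem:continuous} gives $r \in \mathcal{C}(\mathbb{R})$, the \textit{Riemann--Lebesgue} lemma applied to \eqref{eq:K^+_QB^+} gives $\lim_{z\to\pm\infty} r(z) = 0$, and the symmetry $\overline{r(z^{-1})} = r(z)$ together with \eqref{cor:sing} gives the singularity control $|\lim_{z\to 0} r(z)/z| < \infty$ and $\lim_{z\to 0} r(z) = 0$; these are exactly the defining properties of $\mathcal{L}^2_0(\mathbb{R})$. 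The $L^2$ bound itself is furnished by Lemma \ref{lm:r} and Lemma \ref{lm:r-s} (which, as Remark \ref{remark:rlow} notes, only need $q_0 \in L^2$), covering $|z| > 5/4$ and $|z| < 3/4$ respectively, while the interval around $\pm 1$ is handled by continuity plus $\lim_{z\to\pm 1} r(z) = \mp 1$ under \eqref{cond: gen}. Finally the discrete data $\{z_k, c_k\}$ lies in $\mathbb{C}^{2N}$ automatically, with $N$ finite by the lemma on zeros of $a(z)$ (via the Remark following it, since $\mathcal{I}^{2,0}_0 \hookrightarrow L^{2,2}$ suffices under \eqref{cond: gen}).

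For the second map $\mathcal{I}^{3,1}_1 \ni q_0 \mapsto \mathcal{S}_2$, one assembles $r \in H^{1,1}(\mathbb{R})$ from Proposition \ref{prop:r} (first assertion, which requires only $q_0 \in \mathcal{I}^{2,1}_1 \subset \mathcal{I}^{3,1}_1$), the extra regularity $r'' \in L^2((1/2,3/2)\cup(-3/2,-1/2))$ from the second assertion of Proposition \ref{prop:r}, and the singularity condition $|\lim_{z\to 0} r(z)/z| < \infty$ again from \eqref{cor:sing}. The constraint $z_k \neq i$ and the ordering $0 < \arg z_1 < \cdots < \arg z_N < \pi$ are part of the definition of the target set and are imposed as standing assumptions (the genericity/non-degeneracy at $z = i$ being the analogue of \eqref{cond: gen} at that point). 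Thus boundedness of both maps is essentially a bookkeeping exercise once the a priori estimates of Section 2.2 are in hand.

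The substantive part is \emph{Lipschitz continuity}, and this is where I expect the main obstacle. The strategy is to fix two initial data $q_0, \tilde q_0$ in the relevant space and to estimate the difference of the associated Jost functions $m^{(\pm)} - \tilde m^{(\pm)}$ by subtracting the \textit{Volterra} integral equations \eqref{Volterra; origin}: writing $m^{(\pm)} - \tilde m^{(\pm)} = (K^{\pm}_Q - K^{\pm}_{\tilde Q})\tilde m^{(\pm)} + K^{\pm}_Q(m^{(\pm)} - \tilde m^{(\pm)})$, the standard \textit{Volterra} resolvent bound converts this into $\|m^{(\pm)} - \tilde m^{(\pm)}\| \lesssim \|(K^{\pm}_Q - K^{\pm}_{\tilde Q})\tilde m^{(\pm)}\|$, and since $K^{\pm}_Q - K^{\pm}_{\tilde Q}$ depends linearly on $Q - \tilde Q = \mathrm{diag}$-free matrix built from $q - \tilde q$, the right side is controlled by $\|q - \tilde q\|$ in the appropriate weighted norm. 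One then repeats, verbatim, the Fourier/\textit{Plancherel} estimates of Lemmas \ref{lm:r}--\ref{lm:r-1} with $q - 1$ replaced by $q - \tilde q$ and $m^{(+)} - B^+$ replaced by $m^{(+)} - \tilde m^{(+)}$, to propagate the bound to $\partial_z$ and $\partial_z^2$ differences and hence, via \eqref{eq:S'(z)}, to $\|r - \tilde r\|_{H^{1,1}}$ and the localized $r'' - \tilde r''$ norm; the difference of the discrete scattering data is handled by a standard continuity argument for simple zeros of $a(z)$ (implicit function theorem) and the residue formula for $c_k$. The delicate points are: (i) keeping the weights balanced so that the same $\mathcal{I}^{m,n}_k$-norm closes at every order, (ii) controlling the difference near the singular points $\{0, \pm 1\}$ (and $z = i$ for $\mathcal{S}_2$), where uniformity of the \textit{Volterra} resolvent across the whole contour must be invoked, and (iii) ensuring the number of solitons $N$ is locally constant so that the map into $\mathbb{C}^{2N}$ is well-defined on a neighborhood — this last is where genericity \eqref{cond: gen} and $z_k \neq i$ do real work.
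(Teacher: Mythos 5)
The paper actually states Proposition \ref{prop:L2,2} without any proof: it is presented as a summary of the quantitative work already done in Section 2.2 (Lemmas \ref{lm:r}--\ref{lm:r-1}, Proposition \ref{prop:r-low}, Remark \ref{remark:rlow}) together with the discrete-data lemma, and the Lipschitz claim is left implicit. Your reconstruction assembles exactly those ingredients for the membership statements and then supplies the standard argument the paper omits for Lipschitz continuity, namely subtracting the two \textit{Volterra} equations, applying the resolvent bound to $m^{(\pm)}-\tilde m^{(\pm)}=(\mathbf 1-K_Q^{\pm})^{-1}(K_Q^{\pm}-K_{\tilde Q}^{\pm})\tilde m^{(\pm)}$, and rerunning the \textit{Plancherel} estimates with $q-1$ replaced by $q-\tilde q$; this is the intended route and your list of delicate points (weights closing at each order, uniformity near $\{0,\pm1\}$, local constancy of $N$) is the right one. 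Two small corrections: the inclusion you wrote, $\mathcal{I}^{2,1}_1\subset\mathcal{I}^{3,1}_1$, is reversed --- the heavier weight is the smaller space, so $\mathcal{I}^{3,1}_1\subset\mathcal{I}^{2,1}_1$ is what lets you invoke the first assertion of Proposition \ref{prop:r}; and the Lipschitz property can only be meant locally, i.e.\ on bounded subsets of the initial-data space on which the soliton number $N$ and the generic condition \eqref{cond: gen} are stable, since the target $\mathbb{C}^{2N}$ changes dimension otherwise --- you flag this but it deserves to be stated as part of the proposition rather than as an afterthought.
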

\subsubsection{The Beals-Coifman solution} In this subsection we construct the Beals-Coifman solutions needed for the construction of RHP.  We need to find certain piece-wise analytic matrix functions. An obvious choice is 
\begin{equation}
 \begin{cases}
\left( m^{(-)}_1, m^{(+)}_2\right), \qquad \text{Im} z>0,\\
\\
\left( m^{(+)}_1, m^{(-)}_2\right), \qquad \text{Im} z<0,
\end{cases}
\end{equation}
Here we use the fact that 
$m^{(+)}_1$ and $m^{(-)}_2$ have analytic extension to $\bbC^- $. Similarly $m^{(-)}_1$ 
and $m^{(+)}_2$ have analytic extension to $\bbC^+$.
We want the solution to the RHP normalized as $x\rightarrow +\infty$, so we set
\begin{equation}
\label{BC}
M(z; x)= \begin{cases}
(m^{(-)}_1, m^{(+)}_2)\twomat{a(z)^{-1}}{0}{0}{1}, \qquad \text{Im} z>0,\\
(m^{(+)}_1, m^{(-)}_2)\twomat{1}{0}{0}{\overline{a}(\overline{z})^{-1}}, \qquad \text{Im} z<0.
\end{cases}
\end{equation}
Following the same argument as \cite{CJ}, we conclude that for $z\in \bbR$
\begin{equation}
\label{M+M-}
\left(  \dfrac{ m^{(-)}_1}{a(z)}, m^{(+)}_2\right) \twomat{1}{0}{e^{2ix\zeta(z)}  \dfrac{b(z)}{a(z)} }{1}=\left( m^{(+)}_1,  \dfrac{ m^{(-)}_2}{\overline{a}(\overline{z})}\right) \twomat{1}{-e^{-2ix\zeta(z)}\dfrac{\overline{b}(z)}{\overline{a}(z)} }{0}{1}.
\end{equation}
Setting $M_{\pm}(z; x)=\lim_{\epsilon\to 0^+}M( z\pm i\epsilon, x)$, then $M_{\pm}$ satisfy the following jump condition on $\bbR$:
$$M_+(z; x)=M_-(z; x) \Twomat{1-|r(z)|^2}{-e^{-2ix \zeta(z)} \overline{r(z)} }{e^{2ix \zeta(z)} r(z) }{1}.$$
We also calculate the residue at the pole $z_k$:
\begin{align}
\label{residue1}
\textrm{Res}_{z =z_k}M_{+}(z; x)&=\frac{1}{{a}'(z_k)}\Twomat{m^-_{11}(x,z_k)}{0}{m^-_{21}(x,z_k)}{0}\\
\nonumber
                                    &=\frac{e^{2ix\zeta{(z_i)}}\gamma_k}{{a}'(z_k)}\Twomat{m^+_{12}(x,z_k)}{0}{m^+_{22}(x,z_k)}{0}.
\end{align}
Similarly, at the pole $\overline{z_k}$:
\begin{align}
\label{residue2}
\textrm{Res}_{z =\overline{z_i}}M_{-}(z; x)
=-\frac{e^{-2ix\zeta(\overline{z_k} ) }\overline{\gamma_k}}{\overline{a}'(\overline{z_k})}\Twomat{0}{m^+_{11}(x,\overline{z_k} ) }{0}{m^+_{21}(x,\overline{z_k})}.
\end{align}
We now formulate the following\textit{ Riemann-Hilbert} problem necessary for the reconstruction of solution to \eqref{eq:NLS}:
\begin{problem}
\label{RHP:m}
Given scattering data $\mathcal{S}_j$, $j=1,2$ as in Definition \ref{df:sd}, find a $2\times2$ matrix-valued function $M\left(z;x,t\right)$ such that
\begin{itemize}
    \item[1.] $M$ is meromorphic for $z\in\mathbb{C}\backslash\mathbb{R}$;
    \item[2.] $M(z;x,t)=I+\mathcal{O}(z^{-1})$ as $ z\rightarrow\infty$, $zM(z;x,t)=\sigma_1+\mathcal{O}(z)$ as $ z\rightarrow0$;
    \item[3.] $M_{\pm}(z;x,t):=\lim_{\epsilon\rightarrow 0}M(z\pm i\epsilon;x,t)$ exist for any $z\in\mathbb{R}\backslash\{0\}$ and satisfy the jump relation $M_{+}(z;x,t)=M_{-}(z;x,t)V(z)$ where
    \begin{equation}
        V(z)=\left(\begin{array}{cc}
            1-|r(z)|^2           &  -\overline{r(z)}e^{-2it\theta}\\
            r(z)e^{2it\theta}  &  1
        \end{array}\right)
    \end{equation}
    where
    \begin{equation}
        2it\theta(z;x,t)=2it \left[\xi(z-z^{-1})-\frac{1}{2}(z^2-z^{-2})\right],\quad\xi=\frac{x}{2t};
    \end{equation}
     \item[4.] $M(z;x,t)$ satisfies the symmetries
    \begin{equation}
        \begin{aligned}
            \overline{M(\overline{z};x,t)}&=\sigma_1M(z;x,t)\sigma_1,\\
            M(z^{-1};x,t)&=zM(z;x,t)\sigma_1 ;
            \end{aligned}
    \end{equation}
    \item[5.] $M(z;x,t)$ has simple poles at each  $z_k\in\mathcal{Z}$ and $\overline{z_k}\in\overline{\mathcal{Z}}$, with residues satisfying
    \begin{equation}
    \label{m:Res}
        \begin{aligned}
           \mathop{\Res}\limits _{z=z_k}M(z;x,t)=\lim\limits_{z\rightarrow z_k}M(z;x,t)\left(\begin{array}{cc}
                0 & 0 \\
                c_ke^{2it\theta} & 0
            \end{array}\right),\\
              \mathop{\Res}\limits_{z=\overline{z_k}} M(z;x,t)=\lim\limits_{z\rightarrow \overline{z_k}}M(z;x,t)\left(\begin{array}{cc}
                0 & \overline{c_k}e^{-2it\theta} \\
                0 & 0
            \end{array}\right).
        \end{aligned}
    \end{equation}
\end{itemize}
\end{problem}
\begin{remark}
  For each pole $z_j \in \mathbb{C}^+$, let $\Gamma_j$ be a circle centered at  $z_j$ of sufficiently small radius to be in the open upper half-plane and to be disjoint from all other circles. By doing so we replace the residue conditions  of the Riemann-Hilbert problem with Schwarz invariant jump conditions across closed contours (see Figure \ref{fig:res}). The equivalence of this new RHP on augmented contours with the original one is a well-established result (see \cite{Zhou98} ). The purpose of this replacement is to
\begin{enumerate}
\item make use of the celebrated \textit{vanishing lemma} from \cite[Theorem 9.3]{Zhou89} ;
\item formulate the Beals-Coifman representation of the solution of \eqref{eq:NLS}.
\end{enumerate}
We now rewrite the jump conditions of Problem \ref{RHP:m}:
$M(z; x, t)$ is analytic in $\mathbb{C}\setminus \Sigma$ where 
$$\Sigma= \bbR\cup  \left( \bigcup_{k=1}^{N} \Gamma_k  \right) \cup \left( \bigcup_{k=1}^{N} \Gamma_k^*  \right)$$
 is given in figure \ref{fig:res} below  and has continuous boundary values $M_\pm$ on $\Sigma$
and  $M_\pm$ satisfy
$$ M_+(z; x, t)=M_-(z; x, t)e^{-i\theta(z; x, t)\ad\sigma_3}V(z)$$
where 
\begin{align*}
V(z)=\Twomat{1-|r(z)|^2}{-\overline{r(z)}} {r(z) }{1}, \quad z\in \bbR
\end{align*} 
and 
$$
V(z) = 	\begin{cases}
						\twomat{1}{0}{\dfrac{c_k }{z-z_k}}{1}	&	z\in \Gamma_k, \\
						\\
						\twomat{1}{\dfrac{-\overline{c_k}}{z -\overline{z_k }}}{0}{1}
							&	z \in \Gamma_k^*.
					\end{cases}
$$
\end{remark}
\begin{figure}[h]
\label{fig:res}
\caption{Residue conditions}
\begin{tikzpicture}[scale=0.5]
\draw[ thick] (0,0) -- (-3,0);
\draw[ thick] (-3,0) -- (-5,0);
\draw[thick,->,>=stealth] (0,0) -- (3,0);
\draw[ thick] (3,0) -- (5,0);
\node[above] at 		(2.5,0) {$+$};
\node[below] at 		(2.5,0) {$-$};
\draw [red, fill=red] (-3,2) circle [radius=0.09];
\draw [red, fill=red] (-3,-2) circle [radius=0.09];
\draw [red, fill=red] (-1, 3.464) circle [radius=0.09];
\draw [red, fill=red] (-1,-3.464) circle [radius=0.09];
\node[right] at (5 , 0) {$\bbR$};
\draw[dashed] circle [radius=3.605];
\draw circle [radius=0.1];
 \end{tikzpicture}
    \qquad
    \begin{tikzpicture}[scale=0.5]
\draw[ thick] (0,0) -- (-3,0);
\draw[ thick] (-3,0) -- (-5,0);
\draw[thick,->,>=stealth] (0,0) -- (3,0);
\draw[ thick] (3,0) -- (5,0);
\node[above] at 		(2.5,0) {$+$};
\node[below] at 		(2.5,0) {$-$};
\node[left] at (-3.5 , 2) {$\Gamma_j$};
\node[left] at (-3.5 , -2) {$\Gamma_j^*$};
\node[left] at (-1.5 , 3.464) {$\Gamma_k$};
\node[left] at (-1.5 , -3.464) {$\Gamma_k^*$};
\draw[->,>=stealth] (-0.6,3.464) arc(360:0:0.4);
\draw[->,>=stealth] (-2.6,2) arc(360:0:0.4);
\draw[->,>=stealth] (-0.6,-3.464) arc(0:360:0.4);
\draw[->,>=stealth] (-2.6,-2) arc(0:360:0.4);
\draw [red, fill=red] (-3,2) circle [radius=0.07];
\draw [red, fill=red] (-3,-2) circle [radius=0.07];
\draw [red, fill=red] (-1, 3.464) circle [radius=0.07];
\draw [red, fill=red] (-1,-3.464) circle [radius=0.07];
\node[right] at (5 , 0) {$\bbR$};
\draw[dashed] circle [radius=3.605];
\draw circle [radius=0.1];
\end{tikzpicture}
 \begin{center}
  \begin{tabular} {ccc}
Dark solitons ({\color{red} $\bullet$}) 
\end{tabular}
 \end{center}
\end{figure}
It is well-known that $V(z)$ admits the standard triangular factorization:
\begin{equation}
    V(z)=(I-w_{\theta}^-)^{-1}(I+w_{\theta}^+)
\end{equation}
for triangular matrices $w_{\theta}^{\pm}$:
\begin{equation}
\nonumber
\begin{aligned}
w_{\theta}^+ &= 
    \begin{pmatrix}
        0 & 0 \\
        r(z)e^{2it\theta} & 0
    \end{pmatrix}, & w_{\theta}^- &=
    \begin{pmatrix}
        0 & -\overline{r(z)}e^{-2it\theta} \\
        0 & 0
    \end{pmatrix} & z &\in\mathbb{R},  \\ 
w_{\theta}^+ &= 
    \begin{pmatrix}
        0 & 0 \\
        \frac{c_k e^{2it\theta(z_k)}}{z-z_k} & 0
    \end{pmatrix}, & w_{\theta}^- &=
   \begin{pmatrix}
        0 & 0 \\
        0 & 0
    \end{pmatrix} & z &\in {\Gamma}_k, & \\ 
w_{\theta}^+ &= 
   \begin{pmatrix}
        0 & 0 \\
        0 & 0
    \end{pmatrix}, & w_{\theta}^- &=
    \begin{pmatrix}
        0 & \frac{\overline{c_k} e^{-2it\theta(z_k)}}{z- \overline{z_k}} \\
       0 & 0
    \end{pmatrix}& z &\in {\Gamma}_k^*.
\end{aligned}
\end{equation}
Let 
\begin{equation}
    \mu(z;x,t)=M_{+}(1-w_{\theta}^{+})^{-1}=M_{-}(1-w_{\theta}^{-})^{-1}.
\end{equation}
Then it is well established result since \cite{Zhou89} that if $\mu(z;x,t)$ solves the following \textit{Beals-Coifman} integral equation for $z\in\Sigma\setminus \lbrace 0\rbrace$:
\begin{align}
\label{eq:mu}
     \mu(z;x,t)&=I +\twomat{0}{z^{-1}}{z^{-1}}{0}+C_{w_{\theta}}\mu(z;x,t)\\
     \nonumber
                &=I+\frac{\sigma_1}{z}+C_{+}(\mu w_{\theta}^{-})+C_{-}(\mu w_{\theta}^{+}) ,
\end{align}
the solution of the RHP for $M(z;x,t)$ is 
\begin{equation}
    M(z;x,t)=I+\frac{\sigma_1}{z}+\frac{1}{2\pi i}\int_{\Sigma}\frac{\mu (w_{\theta}^{+}+w_{\theta}^{-})}{s-z}ds\quad z\in\mathbb{C}\backslash\Sigma.
\end{equation}
\begin{proposition}
\label{prop:solve}
    Given sets of scattering data $\mathcal{S}_j$, $j=1,2$ with $\arg z_k\neq \pi/2$, then Problem \ref{RHP:m} is uniquely solvable for $t\gg 1$.
\end{proposition}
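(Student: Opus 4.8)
The plan is to convert Problem~\ref{RHP:m} into an equivalent singular integral equation on $L^2(\Sigma)$ and to deduce unique solvability by combining the Fredholm alternative with Zhou's vanishing lemma \cite[Theorem 9.3]{Zhou89}. First I would work on the augmented contour $\Sigma=\mathbb{R}\cup\bigl(\bigcup_{k}\Gamma_k\bigr)\cup\bigl(\bigcup_{k}\Gamma_k^*\bigr)$, on which (as recalled in the excerpt, after \cite{Zhou98}) the residue conditions become Schwarz-invariant unipotent jumps, and use the triangular factors $w_\theta^{\pm}$. Because $M\to I$ at infinity while $zM\to\sigma_1$ at the origin, the natural unknown is $\nu:=\mu-I-\sigma_1/z$, which should lie in $L^2(\Sigma)$ and, by \eqref{eq:mu}, solves $(\mathbf 1-C_{w_\theta})\nu=C_{w_\theta}(I+\sigma_1/z)$ with $C_{w_\theta}=C_+(\,\cdot\,w_\theta^-)+C_-(\,\cdot\,w_\theta^+)$; solving this equation is equivalent to Problem~\ref{RHP:m}.

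Next I would check that $C_{w_\theta}$ and the right-hand side are well defined on $L^2(\Sigma)$. On $\mathbb{R}$ the phase $\theta$ is real, so $|e^{\pm 2it\theta}|=1$ and the nonzero entries of $w_\theta^{\pm}$ are $\pm r\,e^{\pm 2it\theta}$ and $\mp\overline{r}\,e^{\mp 2it\theta}$; for either scattering class of Definition~\ref{df:sd} one has $r\in L^2(\mathbb{R})\cap L^\infty(\mathbb{R})$ (continuity with $r\to0$ at $\pm\infty$ and $r\to\mp1$ at $\pm1$ in the $\mathcal S_1$ case, and $H^{1,1}\hookrightarrow C_0\cap L^\infty$ in the $\mathcal S_2$ case). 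On each $\Gamma_k,\Gamma_k^*$ the entries are $c_ke^{2it\theta(z_k)}/(z-z_k)$ and its conjugate, which are smooth and bounded. Hence $w_\theta^{\pm}\in L^2(\Sigma)\cap L^\infty(\Sigma)$ and, since $C_\pm$ are bounded on $L^2(\Sigma)$, so is $C_{w_\theta}$. For the data the only delicate point is the behaviour near $z=0$, where the essential singularity of $\theta$ sits: the entries of $(\sigma_1/z)w_\theta^{\pm}$ are controlled there by $r(z)/z$, which is bounded by \eqref{cor:sing} — precisely the reason the condition $|\lim_{z\to0}r(z)/z|<\infty$ is built into $\mathcal L^2_0$ and $\breve H^{1,1}_0$ — so $C_{w_\theta}(I+\sigma_1/z)\in L^2(\Sigma)$. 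Since $\det V\equiv1$ and the oscillatory jump is Schwarz-reflection invariant across $\Sigma$ with $w_\theta^{\pm}\in L^2\cap L^\infty$, the operator $\mathbf 1-C_{w_\theta}$ is Fredholm of index zero on $L^2(\Sigma)$ by the standard theory of \cite{Zhou89}.

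It remains to establish injectivity. If $\nu_0\in L^2(\Sigma)$ satisfies $(\mathbf 1-C_{w_\theta})\nu_0=0$, set $N(z):=\tfrac{1}{2\pi i}\int_\Sigma\frac{\nu_0(w_\theta^++w_\theta^-)}{s-z}\,ds$. A direct computation with the Plemelj jump relations gives $N_+=\nu_0(I+w_\theta^+)$ and $N_-=\nu_0(I-w_\theta^-)$, so $N$ solves the homogeneous RHP: analytic off $\Sigma$, $N\to0$ as $z\to\infty$, with jump $N_+=N_-V_\theta$. On $\mathbb{R}$ one has $V_\theta+V_\theta^*=\operatorname{diag}\bigl(2(1-|r|^2),\,2\bigr)$, which is positive definite off the measure-zero set $\{\pm1\}$ because $1-|r(z)|^2=|a(z)|^{-2}>0$; the circle jumps are Schwarz-conjugate unipotent pairs and contribute nonnegatively via Cauchy's theorem inside $\Gamma_k,\Gamma_k^*$. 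Zhou's vanishing lemma then forces $N\equiv0$, and since $I+w_\theta^+$ is everywhere unipotent hence invertible, $N_+=\nu_0(I+w_\theta^+)=0$ yields $\nu_0\equiv0$. Combining this with the index-zero property, $\mathbf 1-C_{w_\theta}$ is boundedly invertible, so the integral equation — equivalently Problem~\ref{RHP:m} — has a unique solution; every estimate is uniform in $x$ and $t$, so the conclusion holds in particular for $t\gg1$. The hypothesis $\arg z_k\neq\pi/2$ is used only to keep the augmented-contour reduction compatible with the inversion symmetry $M(z^{-1})=zM(z)\sigma_1$, i.e.\ to exclude the self-inverse point $z=i$ (the stationary black soliton).

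The main obstacle is the pair of degeneracies at the endpoints $z=0$ and $z=\pm1$. At $z=0$ the $1/z$-growth of $M$ together with the essential singularity of $\theta$ forces the shift by $I+\sigma_1/z$ and requires the boundedness of $r(z)/z$ to keep the data in $L^2$; at $z=\pm1$ the factor $1-|r(z)|^2$ vanishes, so $V_\theta+V_\theta^*$ is only positive \emph{semi}definite, and one must check that the quadratic-form argument underlying the vanishing lemma tolerates this measure-zero degeneracy — it does, since $V_\theta$ itself stays bounded and $\det V_\theta\equiv1$ through $\pm1$. Verifying the hypotheses of the vanishing lemma in the presence of these singularities, rather than the Fredholm bookkeeping, is where the real work lies.
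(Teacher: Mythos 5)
Your proposal is essentially correct, but it takes a genuinely different route from the paper. You apply Zhou's vanishing lemma to the \emph{entire} augmented contour $\Sigma=\mathbb{R}\cup\bigl(\bigcup_k\Gamma_k\bigr)\cup\bigl(\bigcup_k\Gamma_k^*\bigr)$: positivity of $V+V^{\dagger}$ on $\mathbb{R}$ (a.e., since $1-|r|^2=|a|^{-2}>0$ off $\{\pm1\}$) plus Schwarz-conjugate unipotent jumps on the circle pairs kills the homogeneous solution, and index-zero Fredholmness then gives invertibility of $\mathbf 1-C_{w_\theta}$ for \emph{all} $(x,t)$. This is the strategy of \cite{CJ}, and if carried out it proves a statement stronger than the proposition. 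The paper instead applies the vanishing lemma only to the solitonless operator $\mathbf 1-C_{\widetilde w_\theta}$ (jump restricted to $\mathbb{R}$), and recovers $(\mathbf 1-C_{w_\theta})^{-1}$ by the second resolvent identity together with a Neumann series, using that the circle jumps ${w}_{\theta}^\pm\restriction_{\Gamma_k\cup\Gamma_k^*}$ decay like $e^{-ct}$ once the triangularity is reversed by $T_k^{\sigma_3}$ for poles in $\mathcal D^+$. That perturbative step is exactly where the hypotheses $t\gg1$ and $\arg z_k\neq\pi/2$ enter: by \eqref{phase-decay}, $-\Re i\theta(\overline{z_k})=4t\sin(\arg\overline{z_k})(\xi-\cos(\arg\overline{z_k}))$ with $\xi=x/2t\to0$, which fails to produce decay precisely when $\cos(\arg z_k)=0$. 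Your closing claim that $\arg z_k\neq\pi/2$ is used ``only'' to respect the inversion symmetry is therefore a misreading of why the hypothesis appears (though in your approach it is indeed not needed). The one place where your argument asserts rather than proves is the Schwarz-symmetry check on the circles: with the stated orientations one finds $V|_{\Gamma_k^*}(\bar z)^{\dagger}=V|_{\Gamma_k}(z)^{-1}$, so you must reconcile the sign/orientation conventions with the exact hypothesis of \cite[Theorem 9.3]{Zhou89} before invoking the vanishing lemma on $\Sigma\setminus\mathbb{R}$; this is routine but not free, and it is the step the paper's perturbative route deliberately avoids.
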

\begin{proof}
    We first point out that unlike NLS with vanishing boundary conditions at $\pm \infty$, see \cite{DZ03} for instance, Problem \ref{RHP:m} has singularity at the origin. Yet we deduce that 
   \begin{align}
      \mu(z;x,t)-I-\frac{\sigma_1}{z}&= C_{w_{\theta}}\left[\mu(z;x,t)-I-\frac{\sigma_1}{z}\right]+ C_{w_{\theta}}\left[I+\frac{\sigma_1}{z}\right],\\
      \nonumber
      \Longleftrightarrow\\
      \mu(z;x,t)-I-\frac{\sigma_1}{z}&=\left(\mathbf{1}-C_{w_\theta}\right)^{-1}\left(C_{w_{\theta}}\left[I+\frac{\sigma_1}{z}\right]\right).
   \end{align}
   We hereby reduce the solvability of Problem \ref{RHP:m} to the invertibility of the operator $\mathbf{1}-C_{w_\theta}$. We first observe that the inhomogeneous term $C_{w_{\theta}}\left[I+\frac{\sigma_1}{z}\right]$ has no singularities at $z=\pm 1$ given  \eqref{cor:sing}. Then we construct $\mathbf{1}-C_{\widetilde{w}_{\theta}}$ where 
   \begin{align*}
       \widetilde{w}_{\theta}^\pm \restriction_{\mathbb{R}}={w}_{\theta}^\pm ,\quad {w}_{\theta}^\pm \restriction_{\Gamma\cup\Gamma^*}=\mathbf{0}.
   \end{align*}
It is important to notice that since $r(z)\in \mathcal{L}^2_0$ and $(V(z)+V(z)^\dagger) \restriction_{\bbR}$ is positive definite, thus $\mathbf{1}-C_{\widetilde{w}_{\theta}}$ is \textit{Fredholm} by a rational approximation argument and is of index zero by \cite[Lemma 9.3]{Zhou89}, thus $(\mathbf{1}-C_{\widetilde{w}_{\theta}})^{-1}$ exists. Then by \textit{second resolvent identity}, we have that 
\begin{equation}
   \left(\mathbf{1}-C_{{w}_{\theta}}\right)^{-1}\left[\mathbf{1}-\left(C_{{w}_{\theta}}- C_{\widetilde{w}_{\theta}}\right) (\mathbf{1}-C_{\widetilde{w}_{\theta}})^{-1}\right]=(\mathbf{1}-C_{\widetilde{w}_{\theta}})^{-1}.
\end{equation}
Notice that $C_{\widetilde{w}_\theta}\equiv \mathbf{0}$. 
We also partition the set $\{1,...,N\}$ into the pair of sets
\begin{equation}
    \mathcal{D}^-=\{k:\pi/2 <\arg(z_k)< \pi\}\quad \textrm{and} \quad  \mathcal{D}^+=\{k:0<\arg(z_k)<\pi/2\}.
\end{equation}
Notice that if $k\in \mathcal{D}^- $ for $k=1,2...N$, then ${w}_{\theta}^\pm  \restriction_{\Gamma\cup\Gamma^*}$ decay exponentially as $t\to +\infty$. Thus
\begin{equation}
    \norm{\left(C_{{w}_{\theta}}- C_{\widetilde{w}_{\theta}}\right) (\mathbf{1}-C_{\widetilde{w}_{\theta}})^{-1}}{L^2}\lesssim e^{-t}\norm{\left(\mathbf{1}-C_{\widetilde{w}_{\theta}}\right)^{-1}}{L^2}.
\end{equation}
Thus we can construct $ \left(\mathbf{1}-C_{{w}_{\theta}}\right)^{-1}$ through \textit{Neumann} series. If for some $k=1,2...N$, we have that $1,2,...,k\in \mathcal{D}^+$, then we construct $\widetilde{M}_\pm=T_k(\infty)^{-\sigma_3}M_\pm T_k^{\sigma_3}(z)$ with
\begin{equation}
 T_k(z)=   \prod_{i=1}^k\left(\frac{z-z_i}{zz_i-1}\right)
\end{equation}
to reverse the triangularity of $w_\theta^\pm$ and obtain exponential decay in $t$.
\end{proof}

The potential $q(x,t)$ is recovered by the reconstruction formula
\begin{equation}
    q(x,t)=\lim\limits_{z\rightarrow\infty}z\left(M(z;x,t)-I\right)_{21}
\end{equation}
and the partial mass is obtained through:
\begin{equation}
\label{def:int}
\int_x^{+\infty}\left(|q(y,t)|^2-1\right)dy=-i\lim\limits_{z\rightarrow\infty}z\left(M(z;x,t)-I\right)_{22}.
\end{equation}
We end this section with a proposition about the uniform resolvent bound. 
\begin{proposition}
    Given $r(z)\in H^{1,1}(\mathbb{R})$ with $\norm{r}{H^{1,1}}< \eta$ uniformly bounded, if $(1-C_{w_\theta})^{-1}$ exists, then for some $C_\eta>0$ we have 
    \begin{equation}
    \label{op:bound}
        \sup_{r \in H^{1,1}}\left(\sup_{x, t \in[a,\infty)}\left\|\left(I-C_{w_\theta}\right)^{-1}\right\|_{L^2}\right)<C_\eta.
    \end{equation}
\end{proposition}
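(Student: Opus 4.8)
The plan is to establish \eqref{op:bound} by contradiction, using a continuity--compactness scheme in the spirit of \cite{P16} and \cite{CLL}. Suppose the bound fails. Then there exist reflection coefficients $r_n\in H^{1,1}(\mathbb{R})$ with $\|r_n\|_{H^{1,1}}<\eta$, parameters $(x_n,t_n)\in[a,\infty)^2$, and unit vectors $f_n\in L^2(\Sigma)$, $\|f_n\|_{L^2}=1$, such that $g_n:=(\mathbf{1}-C_{w_{\theta_n}})f_n\to 0$ in $L^2$; here $\theta_n=\theta(\,\cdot\,;x_n,t_n)$, $w_{\theta_n}^\pm$ is the triangular datum built from $r_n$ as in Section~2, and $C_{w_{\theta_n}}h=C_+(h\,w_{\theta_n}^-)+C_-(h\,w_{\theta_n}^+)$ as in \eqref{eq:mu}. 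It then suffices to prove $f_n\to 0$ in $L^2$, which contradicts $\|f_n\|_{L^2}=1$.

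I would first record two routine reductions. Since $C_\pm$ are bounded on $L^2(\Sigma)$ and multiplication by $w_{\theta_n}^\pm$ on $\mathbb{R}$ has norm $\lesssim\|r_n\|_{L^\infty}+\|r_n\|_{L^2}\lesssim\|r_n\|_{H^{1,1}}$, while the circle pieces $\Gamma_k,\Gamma_k^*$ contribute uniformly bounded finite-rank operators, one gets $\sup_n\|C_{w_{\theta_n}}\|_{L^2}\le C(\eta)$. Next, the closed $\eta$-ball of $H^{1,1}(\mathbb{R})$ is precompact in $L^\infty(\mathbb{R})\cap L^2(\mathbb{R})$ (the $H^1$ bound gives equicontinuity, the weight gives tightness), so along a subsequence $r_n\to r_*$ there. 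If $(x_n,t_n)$ has a bounded subsequence along which $\xi_n:=x_n/(2t_n)$ stays bounded, then $(x_n,t_n)\to(x_*,t_*)$; using $r_n\to r_*$ in $L^\infty$, the uniform smallness of the $H^{1,1}$-ball near $z=0,\infty$, and local uniform convergence of the phase, one sees $w_{\theta_n}^\pm\to w_{\theta_*}^\pm$ in $L^\infty(\Sigma)$ and hence $C_{w_{\theta_n}}\to C_{w_{\theta_*}}$ in operator norm; since $(\mathbf{1}-C_{w_{\theta_*}})^{-1}$ exists by hypothesis, $f_n=(\mathbf{1}-C_{w_{\theta_n}})^{-1}g_n\to 0$, a contradiction. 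Therefore we may assume $t_n\to\infty$ (the case $t_n$ bounded, $x_n\to\infty$ being identical to the one below, with $x_n$ playing the role of $t_n$), and, along a subsequence, $\xi_n\to\xi_*\in[0,\infty]$.

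The heart of the argument is to show that as $t_n\to\infty$ the operators $C_{w_{\theta_n}}$ form a \emph{collectively compact} family converging strongly to a limit $C_{w_\infty}$ with $\mathbf{1}-C_{w_\infty}$ invertible. Fix a Schwartz approximant $\rho$ with $\|r_n-\rho\|_{L^\infty\cap L^2}$ small; on $\mathbb{R}$ the relevant pieces of $C_{w_{\theta_n}}$ are then $h\mapsto C_\pm(e^{\pm 2it_n\theta_n}\rho\,h)$ up to an $O(\|r_n-\rho\|)$ error. The phase has at most finitely many real stationary points (the real roots of $t_nz^4-x_nz^3-x_nz+t_n=0$; dividing by $t_n$ these are governed by $z^4-2\xi_n z^3-2\xi_n z+1$, so for $\xi_*=0$ they approach the fourth roots of $-1$, which are off $\mathbb{R}$, and for $\xi_*$ off the single critical value they are absent or nondegenerate). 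Away from a shrinking neighbourhood of them, a non-stationary-phase/\emph{Riemann--Lebesgue} estimate gives $C_\pm(e^{\pm 2it_n\theta_n}\rho\,h)\to0$ in $L^2$, uniformly for $h$ in a precompact set; near a stationary point the contribution is controlled by the vanishing measure of the localizing region together with the smoothness of $\rho$. Combining this with the uniform bound of the previous paragraph and letting $\|r_n-\rho\|\to0$ yields the collective compactness, with $C_{w_\infty}=0$ in the non-stationary regime and $C_{w_\infty}$ a solvable local model at the critical value; the circle pieces are finite rank and, for $k\in\mathcal{D}^+$, decay like $e^{-t_n}$ after the $T_k$-conjugation of Proposition~\ref{prop:solve}. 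Since $(V+V^\dagger)|_{\mathbb{R}}$ is positive definite --- here \eqref{cor:sing} is used to exclude spurious singularities at $0,\pm1$ --- the \emph{vanishing lemma} \cite[Thm.~9.3]{Zhou89} makes $\mathbf{1}-C_{w_\infty}$ invertible. Collective compactness then lets us extract $f_n\to f_*$ in $L^2$ with $\|f_*\|=1$, and passing to the limit in $f_n=C_{w_{\theta_n}}f_n+g_n$ gives $(\mathbf{1}-C_{w_\infty})f_*=0$, hence $f_*=0$: the desired contradiction. This proves \eqref{op:bound} with $C_\eta$ depending only on $\eta$ and $a$.

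The main obstacle is exactly the step just described: showing that $\{C_{w_{\theta_n}}\}$ is collectively compact with boundedly invertible strong limit, uniformly over the $H^{1,1}$-ball of reflection coefficients and over $(x,t)\in[a,\infty)^2$. The delicate point is the behaviour near the stationary points of $\theta_n$, where the \emph{Riemann--Lebesgue} decay is lost. For the application to the partial mass over a fixed compact interval one has $x$ bounded and $t\to\infty$, hence $\xi\to0$ and no real stationary phase points, so the limit operator is simply $0$ and the argument simplifies considerably; the general case either restricts $\xi$ away from the critical value or invokes the boundedly invertible local model at it.
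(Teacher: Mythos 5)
Your overall scheme --- argue by contradiction, exploit the compactness of the embedding of the $H^{1,1}$-ball into a weaker space to extract a convergent subsequence of reflection coefficients, treat bounded parameters by norm-continuity and unbounded parameters by a separate limiting analysis --- is exactly the spirit of the paper's (two-line) proof, which invokes the compact embedding $H^{1,1}\hookrightarrow H^{1/2+\varepsilon,1/2+\varepsilon}$ and defers the rest to \cite[Appendix B]{JLPS20}. However, the step you use to close the argument in the regime $t_n\to\infty$ contains a genuine error: the family $\{C_{w_{\theta_n}}\}$ is \emph{not} collectively compact, because the individual Beals--Coifman operators are not compact. Writing $h=C_+h-C_-h$, the piece $h\mapsto C_-\bigl((C_-h)\,w_{\theta}^{+}\bigr)$ is of Toeplitz type: testing on the weakly null sequence $h_k=e^{-ik(\cdot)}\chi$ with $k\to+\infty$, the product $h_kw_\theta^{+}$ has Fourier support escaping to $-\infty$, so $C_-(h_kw_\theta^{+})$ is asymptotic to $\pm h_kw_\theta^{+}$ and its $L^2$ norm does not tend to zero (only the Hankel-type pieces $C_\mp\bigl((C_\pm h)w_\theta^{\pm}\bigr)$ are compact when $w_\theta^{\pm}$ is continuous and vanishes at infinity). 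Consequently you cannot extract a strongly convergent subsequence from $f_n=C_{w_{\theta_n}}f_n+g_n$, and the contradiction ``$\|f_*\|=1$ yet $(\mathbf{1}-C_{w_\infty})f_*=0$'' never materializes. The mechanism that actually works, and that \cite[Appendix B]{JLPS20} uses, is different: Fredholmness of $\mathbf{1}-C_{w_\theta}$ comes from an explicit parametrix built from the triangular factorization $V=(I-w^-)^{-1}(I+w^+)$, not from compactness of $C_{w_\theta}$; and uniformity as the parameters escape to infinity comes from the second resolvent identity together with \emph{operator-norm} (not merely strong) decay of the compositions in which oscillations of opposite sign pair up, e.g. $h\mapsto C_-\bigl(C_+(hw_\theta^-)\,w_\theta^+\bigr)$, fed into the identity $(\mathbf{1}-C_{w_\theta})^{-1}=\mathbf{1}+C_{w_\theta}+(\mathbf{1}-C_{w_\theta})^{-1}C_{w_\theta}^2$.

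A second, smaller but still real, gap is your treatment of real stationary points of $\theta$. Since the proposition takes the supremum over all $x,t\in[a,\infty)$, the ratio $\xi=x/2t$ is not confined near $0$, and for $\xi$ in the appropriate range the quartic $z^4-\xi z^3-\xi z+1$ (not $z^4-2\xi z^3-2\xi z+1$; your stationary-phase equation carries a spurious factor of $2$ in the middle coefficients) does have real roots. There the Riemann--Lebesgue decay is lost, and the ``boundedly invertible local model'' you invoke is precisely the local parabolic-cylinder analysis of the steepest-descent method; asserting its existence is not a proof. Neither issue affects the truth of the proposition, but as written your argument does not establish it.
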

\begin{proof}
Recall that the embedding $i: H^{\alpha, \beta}(\mathbb{R}) \hookrightarrow H^{\alpha^{\prime}, \beta^{\prime}}(\mathbb{R})$ is compact for $\alpha>\alpha^{\prime}$ and $\beta>\beta^{\prime}$. Thus we can construct operator $C_{\breve{w}_\theta}$ with $\breve{r}\in H^{1/2+\varepsilon, 1/2+\varepsilon}$ for some $0<\varepsilon<1/2$ and obtain $(\mathbf{1}-C_{\breve{w}_\theta})^{-1}$ a \textit{Fredholm} operator with zero index. The rest of the proof follows from \cite[Appendix B]{JLPS20}.
\end{proof}
\section{The long time asymptotics of partial mass}
\label{sec: formula}
In this section, we reproduce the result of \cite{V2} under weaker initial data assumptions. We do not give the details of the proofs since we employ the same $\overline{\partial}$-nonlinear steepest descent as is given in \cite{CJ}. We solve the RHP Problem \ref{RHP:m} with scattering data $\mathcal{S}_1$ as defined 
 in Definition \ref{df:sd}. To facilitate the statement of our results,
We partition the set $\{1,...,N\}$ into the pair of sets
\begin{equation}
    \mathcal{D}^+_{\xi}=\{j:\Re(z_j)>\xi\}\quad \textbf{and} \quad  \mathcal{D}^-_{\xi}=\{j:\Re(z_j)\leq\xi\}
\end{equation}
where $|\xi|=|x/2t|<1$. Recall the reconstruction formula \eqref{def:int}, then as $t\rightarrow\infty$ with $x-2\Re(z_k)t=\mathcal{O}(1)$, we have that
\begin{align*}
     \int_x^{+\infty}\left(|q(s,t)|^2-1\right)ds&=i\bar{z_k}\left(\text{sol}\left(x-x_k,t;z_k\right)-1\right)-\sum_{i<k}2\Im(z_i)\\
     &\quad -\frac{1}{2\pi}\int_0^{\infty}\log\left(1-|r(s)|^2\right)ds+\mathcal{O}\left(t^{-1}\right)
\end{align*}
     
where 
\begin{equation}
\label{soliton-tanh}
\begin{aligned}
    \text{sol}(x,t;z):=-iz(i\Re( z)+\Im (z)\tanh(\Im (z)(x-2t\Re (z)))),
\end{aligned}
\end{equation}
and
\begin{equation}
\label{xk}
    x_k=\frac{1}{2\Im(z_k)}\left(\log\left(\frac{|c_k|}{2\Im(z_k)}\prod_{\begin{subarray}{l}
j\in\mathcal{D}^+_{\xi}\\
j\neq k
\end{subarray}}\left|\frac{z_k-z_j}{z_kz_j-1}\right|^2\right)-\frac{\Im(z_k)}{\pi}\int_0^{\infty}\frac{\log\left(1-|r(s)|^2\right)}{|s-z_k|^2}ds \right).
\end{equation}
And furthermore
\begin{equation}
\label{q:asy}
    \begin{aligned}
        \int_x^{+\infty}\left(|q(s,t)|^2-1\right)ds=\sum^{N}_{k=1}i\bar{z_k}\left[\text{sol}\left(x-x_k,t;z_k\right)-1\right]-\frac{1}{2\pi}\int_0^{\infty}\log\left(1-|r(z)|^2\right)ds+\mathcal{O}\left(t^{-1}\right).
    \end{aligned}
\end{equation}
Using the following identity: 
\begin{equation}
    \int_{-\infty}^{+\infty}\left(\left|q\left(s, t\right)\right|^2-1\right) \mathrm{d} x^{\prime}=-2 \sum_{k=1}^N \sin \left(\arg z_k\right)-\int_{-\infty}^{+\infty} \ln \left(1-|r(z)|^2\right) \frac{\mathrm{d} z}{2 \pi} ,
\end{equation}
we can easily deduce that 
\begin{align*}
    \int^x_{-\infty}\left(|q(s,t)|^2-1\right)ds=&-\sum^{N}_{k=1}i\bar{z_k}\left[\text{sol}\left(x-x_k,t;z_k\right)-1\right]-\frac{1}{2\pi}\int^0_{-\infty}\log\left(1-|r(z)|^2\right)ds+\mathcal{O}\left(t^{-1}\right) \\
    &-2 \sum_{k=1}^N \sin \left(\arg z_k\right).
\end{align*}

\subsection{The proof of Theorem 1.2}
We now solve the RHP Problem \ref{RHP:m} with scattering data $\mathcal{S}_1=\lbrace r(z),\left\{z_k, c_k\rbrace_{k=1}^{N}\right\} $ as in Definition \ref{df:sd}. Again for brevity during the proof we assume that 
$k\in \mathcal{D}^-$ for $k=1,2...N$ where 
$$\mathcal{D}^{-}=\left\{k: \pi / 2<\arg \left(z_k\right)<\pi\right\}.$$
Otherwise we define a new matrix-valued function $\widetilde{M}(z; x, t) = T_k(\infty)^{-\sigma_3}M(z; x, t)T_k(z)^{\sigma_3}$ where $T_k(z)=\prod\limits_{k\in\mathcal{D}^+}\frac{z-z_k}{zz_k-1}$. For fixed $x\in \mathbb{R}$, we have $\xi=x/2t=o(1)$ as $t\to \infty$, thus
\begin{equation}
\label{phase-decay}
   -\Real i\theta(\overline{z_k})= 4 t \sin (\arg \overline{z_k})(\xi-\cos (\arg\overline{z_k}) )<0.
\end{equation}
From reconstruction formula $\eqref{def:int}$ , we have 
\begin{equation}
\label{exp:q}
\begin{aligned}
 \int_x^{+\infty}\left(|q(s,t)|^2-1\right)ds&=-i\lim\limits_{z\rightarrow\infty}z\left(M(z;x,t)-I\right)_{22}\\
 &=\frac{1}{2\pi }\int_{\mathbb{R}}\mu_{21}\overline{r(z)}e^{-2it\theta}ds+ \sum_{j=1}^{N} {\mu_{21}( \overline{z_k }) {\overline{c_k}} e^{-2i\theta(   \overline{z_k}  )} }.
 \end{aligned}
 \end{equation}
We then take a family of initial data  $\lbrace q_{0,n}\rbrace_{n=1}^\infty$ with $q_{0,n}\in \mathcal{I}^{3,1}_1$ and we can generate the following family of scattering data
$$ {\bigoplus}_{n=1}^\infty \mathcal{S}_{2,n}={\bigoplus}_{n=1}^\infty\left\{ r_n(z),\left\{z_{k,n}, c_{k,n}\right\}_{k=1}^{N}\right\} $$ 
as defined in Definition \ref{df:sd}. We require that
\begin{equation}
    \lim_{n\to +\infty}\norm{q_0-q_{0,n}}{L^{2,2}}=0.
\end{equation}
By Proposition \ref{prop:L2,2} we can deduce that
$$ \lim_{n\to +\infty}\norm{r(z)-r(z)_{n}}{L^{2}}=0$$
and
\begin{equation}
    \lim_{n\to +\infty}\sum_{k=1}^N |z_k-z_{k,n}|+|c_k-c_{k,n}|=0.
\end{equation}
We solve the corresponding RHP and obtain:
\begin{equation}
\label{exp:qn}
\begin{aligned}
    \int_x^{+\infty}\left(|q_n(s,t)|^2-1\right)ds&=\frac{1}{2\pi }\int_{\Sigma}\left(\mu_n (w_{\theta,n}^{+}+w_{\theta,n}^{-})\right)_{22}ds\\
    &=\frac{1}{2\pi }\int_{\mathbb{R}}\mu_{n,21}\overline{r_n(z)}e^{-2it\theta}ds+ \sum_{k=1}^{N} {\mu_{n, 21}( \overline{z_k }) {\overline{c_k}} e^{-2i\theta(   \overline{z_k}  )} }.
    \end{aligned}
\end{equation}
\begin{proof}[Proof of Theorem \ref{thm: main}] We first set $t\gg 1$ to ensure the existence of the resolvent operator $\left(\mathbf{1}-C_{w_{\theta, n}}\right)^{-1}$ using Proposition \ref{prop:solve}. 
    We then compute the difference between \eqref{exp:q} and \eqref{exp:qn}.
    \begin{align*}
       D_n &=\dfrac{1}{2\pi}\int_\bbR \mu_{n, 21}\left(  \overline{r_n(z)}  -\overline{r(z)} \right) e^{-2i\theta} dz+ \dfrac{1}{2\pi} \int_\bbR \left( \mu_{n, 21} - \mu_{21} \right) \overline{r(z)} e^{-2i\theta} dz\\
       &\quad +\sum_{k=1}^{N} \left(\mu_{n, 21}( \overline{z_{k,n} }){\overline{c_{k,n}}} e^{-2i\theta(   \overline{z_{k,n}}  )} -\mu_{21}( \overline{z_k }) {\overline{c_k}} e^{-2i\theta(   \overline{z_k}  )} \right)\\
	&=: D_{n, 1} +D_{n, 2}	+D_{n, 3}	.
	\end{align*}
 We first observe from \eqref{phase-decay} that
 \begin{equation}
 \label{dif-decay}
     \left\vert D_{n, 3}	 \right\vert \lesssim e^{-t}.
 \end{equation}
 The first term can be decomposed as
	\begin{align*}
	D_{n, 1}& =\dfrac{1}{2\pi}\int_\bbR( \mu_{n, 21}-z^{-1})\left(  \overline{r_n(z)}  -\overline{r(z)} \right) e^{-2i\theta} dz+\dfrac{1}{2\pi}\int_\bbR z^{-1}\left(  \overline{r_n(z)}  -\overline{r(z)} \right)  e^{-2i\theta} dz\\
 &=D_{n, 11}+D_{n, 12}.
	\end{align*}
Since we can control the singularity at $z=0$,  
\begin{equation}
\label{Dif-1}
    \left\vert D_{n, 12} \right\vert \lesssim \norm{\overline{r_n(z)}  -\overline{r(z)}}{L^2}.
\end{equation}
For $D_{n, 11}$, we observe that
\begin{align}
\label{Dif-2}
     \left\vert D_{n, 11} \right\vert& \lesssim \norm{\mu_{n, 21}-z^{-1}}{L^2}\norm{\overline{r_n(z)}  -\overline{r(z)}}{L^2}\\
     \nonumber
     &\lesssim \norm{\left(\mathbf{1}-C_{w_{\theta, n}}\right)^{-1}}{L^2}\norm{C_{w_{\theta, n}}\left[I+\frac{\sigma_1}{z}\right]}{L^2}\norm{\overline{r_n(z)}  -\overline{r(z)}}{L^2}\\
     \nonumber
     &\lesssim  \norm{\overline{r_n(z)}  -\overline{r(z)}}{L^2}
\end{align}
where the last inequality follows from the uniform resolvent bound \eqref{op:bound}. 
For $D_{n, 2}$, we  have to bound the difference $\mu_{n, 21} - \mu_{21} $. Instead we study the matrix form:
\begin{align}
\label{dif-1}
  \mu_{n} - \mu&= \left(\mathbf{1}-C_{w_\theta, n}\right)^{-1}\left(C_{w_{\theta, n}}\left[I+\frac{\sigma_1}{z}\right]\right)-\left(\mathbf{1}-C_{w_\theta}\right)^{-1}\left(C_{w_{\theta}}\left[I+\frac{\sigma_1}{z}\right]\right)\\
  \nonumber
  &= \left[\left(\mathbf{1}-C_{w_\theta, n}\right)^{-1}-\left(\mathbf{1}-C_{w_\theta}\right)^{-1}\right]\left(C_{w_{\theta,n}}\left[I+\frac{\sigma_1}{z}\right]\right)\\
  \nonumber
  &\quad +\left(\mathbf{1}-C_{w_\theta}\right)^{-1}\left(C_{w_{\theta,n}}\left[I+\frac{\sigma_1}{z}\right]- C_{w_{\theta}}\left[I+\frac{\sigma_1}{z}\right]\right)\\
  \nonumber
  &:= D_{\mu, n, 1}+D_{\mu, n, 2}.
\end{align}
The uniform bound for $\left(1-C_{w, n}\right)^{-1}$ is obtained in \eqref{op:bound}. To obtain the uniform bound for $\left(1-C_{w}\right)^{-1}$ we repeat the argument in Proposition \ref{prop:solve}. Together with the $L^2$ boundedness of \textit{Cauchy} projection $C_\pm$, it is easy to deduce that 
\begin{equation}
\label{dif-2}
\begin{aligned}
\norm{ D_{\mu, n, 2}}{L^2}  &\leq \norm{\left(\mathbf{1}-C_{w_\theta}\right)^{-1}}{L^2}\norm{\overline{r_n(z)}  -\overline{r(z)}}{L^2}\\
&\lesssim \norm{\overline{r_n(z)}  -\overline{r(z)}}{L^2}.
\end{aligned}
\end{equation}
Making use of the uniform resolvent bound \eqref{op:bound} again and the second resolvent identity:
$$\left\|\left(1-C_w\right)^{-1}-\left(1-C_{w_n}\right)^{-1}\right\|_{L^2} \leq\left\|\left(1-C_w\right)^{-1}\right\|_{L^2}\left\|C_{w_n}-C_w\right\|_{L^2}\left\|\left(1-C_{w, n}\right)^{-1}\right\|_{L^2} $$
we can similarly deduce 
\begin{equation}
\label{dif-3}
    \norm{ D_{\mu, n, 1}}{L^2} \lesssim \norm{\overline{r_n(z)}  -\overline{r(z)}}{L^2}.
\end{equation}
Combining \eqref{dif-1}-\eqref{dif-3}, we obtain
\begin{equation}
\label{dif-4}
     \left\vert D_{n, 2} \right\vert \lesssim \norm{\overline{r_n(z)}  -\overline{r(z)}}{L^2}.
\end{equation}
Given $\varepsilon>0$ we then choose $n\gg 1$ such that $\norm{\overline{r_n(z)}  -\overline{r(z)}}{L^2}<\varepsilon$, then we combine \eqref{dif-decay}, \eqref{Dif-1}-\eqref{Dif-2} and \eqref{dif-4} to deduce that 
\begin{align}
    \int_x^{+\infty}\left(|q(s,t)|^2-1\right)ds &=\int_x^{+\infty}\left(|q_n(s,t)|^2-1\right)ds + \mathcal{O}(e^{-t})+\varepsilon\\
    \nonumber
    &=\sum^{N}_{k=1}i\bar{z_{k,n}}\left[\text{sol}\left(x-x_{k,n},t;z_{k,n}\right)-1\right]-\frac{1}{2\pi}\int_0^{\infty}\log\left(1-|r_n(s)|^2\right)ds\\
    \nonumber
    &\quad + \mathcal{O}(t^{-1})+\varepsilon
\end{align}
where the second equality above follows from \eqref{q:asy} which is obtained from nonlinear steepest descent.
The conclusion of Theorem \ref{thm: main} follows from taking difference and the fact that $\varepsilon$ is arbitrarily small. Notice that the solitons all become constant due to the decay assumption \eqref{phase-decay}.
\end{proof}

\section*{Acknowledgement}
The authors are grateful to Xian Liao from Karlsruhe Institute of Technology for many useful discussions.

\end{document}